\title[Hodge numbers in positive characteristic]{Hodge numbers are not derived invariants \\ in positive characteristic}
\author[N.~Addington]{Nicolas Addington}
\address{Nicolas Addington \\
Department of Mathematics \\
University of Oregon \\
Eugene, Oregon 97403 \\
USA}
\email{adding@uoregon.edu}
\author[D.~Bragg]{Daniel Bragg}
\address{Daniel Bragg \\
Department of Mathematics \\
University of Utah \\
155 S 1300 E \\
Salt Lake City, Utah 84112 \\
USA}
\email{bragg@math.utah.edu}
\address{Alexander Petrov \\
Max-Planck-Institut f\"ur Mathematik \\
Vivatsgasse 7 \\
53111 Bonn \\
Germany}
\email{alexander.petrov.57@gmail.com }
\newcommand \E {\mathrm E}
\renewcommand \H {\mathrm H}
\newcommand \R {\mathrm R}
\newcommand \bC {\mathbf C}
\newcommand \bF {\mathbf F}
\newcommand \bP {\mathbf P}
\newcommand \bQ {\mathbf Q}
\newcommand \bZ {\mathbf Z}
\newcommand \cI {\mathscr I}
\newcommand \cO {\mathscr O}
\newcommand \cX {\mathscr X}
\DeclareMathOperator \HH {HH}
\DeclareMathOperator \Char {char}
\DeclareMathOperator \Pic {Pic}
\DeclareMathOperator \red {red}
\DeclareMathOperator \Aut {Aut}
\DeclareMathOperator \Br {Br}
\DeclareMathOperator \Gr {Gr}
\DeclareMathOperator \coker {coker}
\DeclareMathOperator \Spec {Spec}
\DeclareMathOperator \Hom {Hom}
\DeclareMathOperator \Ext {Ext}
\DeclareMathOperator \sPic {Pic}
\newcommand \piOneTop {\pi_1^\textup{top}}
\newcommand \piOneEt {\pi_1^\textup{\'et}}
\DeclareMathOperator \dR {dR}
\DeclareMathOperator \TR {TR}
\DeclareMathOperator \Tor {Tor}
\DeclareMathOperator \NS {NS}
\DeclareMathOperator \DM {DM}
\DeclareMathOperator \rank {rank}
\DeclareMathOperator \Hdg {Hdg}
\DeclareMathOperator \td {td}
\DeclareMathOperator \ch {ch}
\DeclareMathOperator{\cHom}{\cH\!\it om}
\newcommand \cH {\mathscr H}
\newcommand \hA {\widehat A}
\newcommand \hB {\widehat B}
\newcommand \fA {\mathfrak A}
\newcommand \fB {\mathfrak B}
\newcommand \fX {\mathfrak X}
\newcommand \fZ {\mathfrak Z}
\DeclareMathOperator \cris {cris}
\newcommand \Prism {\displaystyle \mathbb \Delta}
\newcommand \oPrism {\overline \Prism}
\newtheorem{theorem}{Theorem}[section]
\newtheorem{lemma}[theorem]{Lemma}
\newtheorem{proposition}[theorem]{Proposition}
\newtheorem{corollary}[theorem]{Corollary}
\theoremstyle{definition}
\newtheorem{remark}[theorem]{Remark}
\newtheorem{remarks}[theorem]{Remarks}
\newtheorem{questions}[theorem]{Questions}
\newtheorem{convention}[theorem]{Convention}
\numberwithin {equation} {section}
\newcommand{\PreserveBackslash}[1]{\let\temp=\\#1\let\\=\temp}
\newcolumntype{C}[1]{>{\PreserveBackslash\centering}p{#1}}
\begin{document}

\maketitle

\begin{abstract}
We study a pair of Calabi--Yau threefolds $X$ and $M$, fibered in non-principally polarized Abelian surfaces and their duals, and an equivalence $D^b(X) \cong D^b(M)$, building on work of Gross, Popescu, Bak, and Schnell.  Over the complex numbers, $X$ is simply connected while $\pi_1(M) = (\bZ/3)^2$.  In characteristic 3, we find that $X$ and $M$ have different Hodge numbers, which would be impossible in characteristic 0.

In an appendix, we give a streamlined proof of Abuaf's result that the ring $\H^*(\cO)$ is a derived invariant of complex threefolds and fourfolds.

A second appendix by Alexander Petrov gives a family of higher-dimensional examples to show that $h^{0,3}$ is not a derived invariant in any positive characteristic.
\end{abstract}

%%%%%%%%%%%%%%%%%%%%%%%%%%%%%%%%%%%%%%%%%%%%%%%%%%%%%% 

\section{Introduction}

What cohomological invariants are preserved by equivalences of derived categories of coherent sheaves?  We consider a smooth proper variety $X$ over a field $k$.  Certainly the Hochschild numbers $hh_i = \dim \HH_i(X)$ are preserved.  If $\Char k = 0$ or $\Char k > \dim X$ then these are sums of Hodge numbers $h^{i,j} = \dim H^j(\Omega^i_X)$, namely,
\[ hh_i = \sum_j h^{j,j-i}. \]
More subtly, Antieau and Vezzosi showed that the same equality holds when $\Char k = \dim X$ \cite[Thm.~1.3]{antieau-vezzosi}.

Popa and Schnell showed that in characteristic zero, the first Betti number is derived invariant \cite{ps}, which implies that the Hodge numbers $h^{1,0}=h^{0,1}$ are derived invariant.  Achter, Casalain-Martin, Honigs, and Vial extended the result on $b_1$ to positive characteristic \cite[Thm.~A.1]{honigs2}, but there it no longer implies the result on $h^{1,0}$ and $h^{0,1}$, as we will see below.  Abuaf showed that when $\Char k = 0$ and $\dim X \le 4$, the cohomology \emph{ring} $\H^*(\cO_X)$ is a derived invariant \cite[Thm.~1.3(4)]{abuaf}; in Appendix \ref{app:abuaf} we give a streamlined account of his proof.

Taken together, these results imply that all Hodge numbers are derived invariants in characteristic zero when $\dim X \le 3$, and a conjecture of Orlov \cite[Conj.~1]{orlov_motives} would imply that this continues in higher dimensions.  In positive characteristic, Antieau and Bragg showed that Hodge numbers are derived invariants when $\dim X \le 2$ \cite[Thm.~1.3(1)]{antieau_bragg}, and the Euler characteristics $\chi(\Omega^i_X)$ are derived invariants when $\dim X = 3$ \cite[Thm.~5.33]{antieau_bragg}.  But in this paper we show that Hodge numbers of threefolds are not derived invariants in positive characteristic:

\begin{theorem} \label{main_thm}
There are smooth projective threefolds $X$ and $M$ defined over $\bar{\bF}_3$, with Hodge numbers $h^{i,j} = h^j(\Omega^i)$ as shown,
\[
\arraycolsep=-1pt
\begin{array}{ccccccc}
\\[1ex]
            &         &         & h^{3,3} &         &         &         \\
            &         & h^{3,2} &         & h^{2,3} &         &         \\
            & h^{3,1} &         & h^{2,2} &         & h^{1,3} &         \\
    h^{3,0} &         & h^{2,1} &         & h^{1,2} &         & h^{0,3} \\
            & h^{2,0} &         & h^{1,1} &         & h^{0,2} &         \\
            &         & h^{1,0} &         & h^{0,1} &         &         \\
            &         &         & h^{0,0} &         &         &
\end{array}
\arraycolsep=4pt
\begin{array}{ccccccccccccccccc}
&   &   &   &   & X &   &   &   &            &   &   &   & M &   &   &   \\[\bigskipamount]
&   &   &   &   & 1 &   &   &   &            &   &   &   & 1 &   &   &   \\
&   &   &   & 0 &   & 0 &   &   &            &   &   & 1 &   & 1 &   &   \\
&   &   & 0 &   & 7 &   & 1 &   &            &   & 1 &   & 7 &   & 0 &   \\
& = & 1 &   & 8 &   & 8 &   & 1 & \text{vs.} & 1 &   & 6 &   & 6 &   & 1 \\
&   &   & 1 &   & 7 &   & 0 &   &            &   & 0 &   & 7 &   & 1 &   \\
&   &   &   & 0 &   & 0 &   &   &            &   &   & 1 &   & 1 &   &   \\
&   &   &   &   & 1 &   &   &   &            &   &   &   & 1 &   &   &
\end{array}
\]
and an $\bar\bF_3$-linear exact equivalence $D^b(X) \cong D^b(M)$.
\end{theorem}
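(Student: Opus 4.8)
The plan is to realize $X$ and $M$ as a Fourier--Mukai dual pair of abelian-surface-fibered Calabi--Yau threefolds, to transport the Gross--Popescu--Bak equivalence to characteristic $3$, and then to compute the two Hodge diamonds directly. The conceptual point is that over $\bC$ the equivalence forces the diamonds to agree, while reduction modulo $3$ destroys the Cartier self-duality underlying that agreement.

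Following Gross--Popescu, I would first build a Calabi--Yau threefold $X$ equipped with a fibration $\pi\colon X \to \bP^1$ whose generic fibre is an abelian surface carrying a polarization $L$ of type $(1,3)$, together with the fibrewise action of the associated theta group scheme. I would arrange this over $\bZ[\zeta_3]$, so that the generic fibre recovers Schnell's complex example (with $\pi_1(X) = 0$ and $\pi_1(M) = (\bZ/3)^2$) while the special fibre lives over $\bar\bF_3$. The companion threefold $M \to \bP^1$ is the relative dual fibration, with fibres the dual abelian surfaces $\hA_b$. The decisive object is the kernel group scheme $\ker(\phi_L) \cong \bZ/3 \times \mu_3$: in characteristic $0$ both factors are isomorphic to $\bZ/3$, but in characteristic $3$ the factor $\mu_3$ is infinitesimal, and this single difference drives the entire computation.

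The equivalence $D^b(X) \cong D^b(M)$ is a relative Fourier--Mukai transform whose kernel is the Poincar\'e sheaf on $X \times_{\bP^1} M$; the Bridgeland--Maciocia and Bak arguments showing it is an equivalence are insensitive to the characteristic, once $M$ is identified with a fine relative moduli space of sheaves on the fibres of $\pi$. I would therefore verify the relevant flatness and cohomology-and-base-change statements at the characteristic-$3$ fibre, and check that $\pi$ acquires no fibre over $\bar\bF_3$ bad enough to break the moduli interpretation, so that the equivalence spreads out from the generic fibre. The feature to keep in view is that, fibrewise, Mukai's self-duality of the abelian surface exchanges the \'etale factor $\bZ/3 \subset \ker(\phi_L)$ with the infinitesimal factor $\mu_3$.

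The genuine work, and the step I expect to be the main obstacle, is the characteristic-$3$ Hodge computation. I would compute $H^*(\cO_X)$, $H^*(\cO_M)$ and the spaces of global forms directly from the fibration and the group-scheme structure, tracking how the infinitesimal factor $\mu_3$ contributes a one-dimensional piece to $\mathrm{Lie}\,\Pic^\tau_M = H^1(\cO_M)$ whereas the \'etale factor $\bZ/3$ feeds only into $\pi_1$. This is precisely what makes $h^{0,1}(X) = 0$ but $h^{0,1}(M) = 1$, and it makes Hodge symmetry fail for both threefolds in opposite directions: $X$ has $h^{2,0} = 1,\ h^{0,2} = 0$ whereas $M$ has $h^{2,0} = 0,\ h^{0,2} = 1$. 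The remaining entries are then pinned down by Serre duality, the Calabi--Yau condition $\omega \cong \cO$, and the identities $hh_i = \sum_j h^{j,j-i}$: the Hochschild numbers $hh_i$ agree for $X$ and $M$ since they are derived equivalent, and in the critical case $\Char k = \dim X = 3$ the Antieau--Vezzosi theorem identifies these with the diagonal sums of Hodge numbers, which one checks do coincide even though the full diamonds do not. The delicate point throughout is to control the coherent cohomology of the non-reduced structure at characteristic $3$ and to confirm that the Leray and Hodge-to-de Rham spectral sequences behave as asserted, since it is exactly the breakdown of the characteristic-$0$ degeneration that produces the discrepancy.
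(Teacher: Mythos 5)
Your overall architecture matches the paper's: a Gross--Popescu abelian-surface-fibered Calabi--Yau $X$ over $\bar\bF_3$, a dual fibration $M$ realized as a relative moduli space, a relative Fourier--Mukai equivalence checked via Bridgeland--Maciocia in arbitrary characteristic, and the infinitesimal factor $\mu_3$ of $\sPic^\tau_M$ producing $h^{0,1}(M)=1$ via the tangent space of the Picard scheme. But the Hodge computation has two genuine gaps. The first is that you give no mechanism for computing the middle Hodge numbers of $X$ itself. The entries $h^{1,1}(X)=7$ and $h^{2,1}(X)=8$ jump from their characteristic-zero values $6$ and $6$, and this jump does not follow from the fibration or the theta-group structure: the paper obtains $h^*(T_X)=1,8,7,0$ only by an explicit Macaulay2 computation of $\R\pi_{1*}T_X$ via the tangent complex of the small resolution $X\to Y$ and a hypercohomology spectral sequence, and only for a specific parameter $p\in\bP^3(\bF_9)$. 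Since $h^2_{\dR}(X)=8>b_2=6$, there is torsion in $\H^3(X/W)$, so no lifting or degeneration argument recovers these entries from the complex diamond. Without them you do not know the Hochschild numbers $hh_i$, so the bootstrap you propose for $M$ cannot start.

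The second gap is your claim that the remaining entries of $M$ are pinned down by Serre duality, $\omega_M\cong\cO_M$, and $hh_i=\sum_j h^{j,j-i}$. These constraints determine everything except the split of $h^{1,0}(M)+h^{2,1}(M)=7$ (the relation $hh_{-1}=hh_1$ gives nothing new by Serre duality), so the diamond $(h^{1,0},h^{2,1})=(0,7)$ is not excluded. The paper needs an additional input: Oda's theorem identifying the space of indefinitely closed $1$-forms with the Dieudonn\'e module of $\sPic_M[V]=\bZ/3$, which applies because $\H^0(\Omega^2_M)=0$ forces every $1$-form to be indefinitely closed, giving $h^{1,0}(M)=1$. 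Two smaller inaccuracies: the polarization type is $(1,6)$, with $\ker\phi_L\cong\bZ/6\times\mu_6$ of order $36$ and only the subgroup $\bZ/3\times\mu_3$ acting freely (a $(1,3)$-polarized abelian surface maps to $\bP^2$ and admits no such construction; the example with $\pi_1(M)=(\bZ/3)^2$ is the $(1,6)$ one, not Schnell's $(1,8)$ case); and $\pi_2\colon X\to\bP^1$ has reducible fibers even over $\bC$, so ``the relative dual fibration'' is not automatic --- properly semistable rank-one sheaves cannot be avoided by varying the polarization, and the paper must pass to sheaves on the blow-up of $X$ along a section to obtain a fine moduli space whose universal sheaf descends to $X\times_{\bP^1}M$.
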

Our $X$ and $M$ lift to characteristic zero, where both have Hodge diamond
\begin{equation} \label{eq:1661}
\arraycolsep=4pt
\begin{array}{ccccccccccccccc}
  &   &   & 1 &   &   &   \\
  &   & 0 &   & 0 &   &   \\
  & 0 &   & 6 &   & 0 &   \\
1 &   & 6 &   & 6 &   & 1 \\
  & 0 &   & 6 &   & 0 &   \\
  &   & 0 &   & 0 &   &   \\
  &   &   & 1\makebox[0pt]{ ,}
\end{array}
\end{equation}
but their Hodge numbers jump in different ways when we reduce mod 3.

Our construction is motivated by work of Bak \cite{bak} and Schnell \cite{schnell}, who produced the first example of a pair of derived equivalent complex threefolds with different fundamental groups: one was a Calabi--Yau threefold constructed by Gross and Popescu in \cite[\S6]{grpo}, fibered in $(1,8)$-polarized Abelian surfaces over $\bP^1$, and the other was the dual Abelian fibration.  We initially hoped that that pair would have interesting properties in characteristic 2, but writing the same equations over $\bar\bF_2$, we found that no choice of parameters yielded smooth threefolds.  Instead, for our threefold $X$ we take a different example from the same paper of Gross and Popescu \cite[\S4]{grpo}, fibered in $(1,6)$-polarized Abelian surfaces, and we find that it behaves well over $\bar\bF_3$.  Then over both $\bC$ and $\bar\bF_3$ we construct $M$ as a compactified relative Picard scheme.  As a side benefit, we produce another pair of derived equivalent complex threefolds with different fundamental groups.

We face two issues that were not present in the $(1,8)$ example.  First, the Abelian fibration $X \to \bP^1$ has some reducible fibers, so more care is required to construct the dual Abelian fibration $M \to \bP^1$.  Second, in the $(1,8)$ example, $M$ was isomorphic to the quotient of $X$ by a free action of $\bZ/8 \times \mu_8$, but in the $(1,6)$ example, the analogous action of $\bZ/6 \times \mu_6$ on $X$ is not free.  The subgroup $\bZ/3 \times \mu_3$ does act freely, however, and this is enough to make $\H^1(M, \bZ/3) \ne 0$.  Over $\bar\bF_3$, the Artin--Schreier exact sequence
\[ \xymatrix@R=0pt{
0 \ar[r] & \bZ/3 \ar[r] & \cO_M \ar[r] & \cO_M \ar[r] & 0 \\
& & f \ar@{|->}[r] & f^3 - f
} \]
yields an injection $\H^1(M,\bZ/3) \hookrightarrow \H^1(\cO_M)$, explaining the jump in $h^{0,1}$.  Lam also noticed this possibility and discussed it in \cite[\S9]{lam}.

We prove Theorem \ref{main_thm} in \S\ref{sec:construction}--\S\ref{sec:finish_the_proof}.  We rely on the computer algebra system Macaulay2 \cite{M2} at several points: to check that Gross and Popescu's equations for $X$ give a smooth threefold over $\bar\bF_3$, to compute its Hodge numbers, to show that the map $X \to \bP^1$ still has smooth fibers, and to analyze the fixed locus of the $\bZ/6 \times \mu_6$ action.  Our construction of $M$ as a moduli space of sheaves does not yield equations that can go into a computer, so we study $M$ by hand.

In \S\ref{sec:crystalline} we compute the crystalline and Hodge--Witt cohomology of $X$ and $M$, which clarifies what is going on with the Hodge numbers.  The free parts of these cohomology groups are the same for $X$ and $M$, but the torsion parts appear in different places, reminiscent of the behavior of singular cohomology of complex Calabi--Yau threefolds seen in \cite{br_not_inv}.  The role that topological K-theory plays for derived equivalences over $\bC$ is played here by Hessolholt's $\TR$ invariants, sometimes called topological restriction homology.

\begin{remarks} \label{rem:intro} \
\begin{enumerate}
\addtolength \itemsep \smallskipamount

\item Van der Geer and Katsura asked whether a Calabi--Yau threefold in positive characteristic can have $h^{1,0} \ne 0$ or $h^{2,0} \ne 0$ in \cite[\S7]{vdg-katsura}.  Our $X$ shows that $h^{2,0} \ne 0$ is possible.  The quotient $X/\mu_3$ shows that $h^{1,0} \ne 0$ is possible as well, exactly like Lam's example of a quintic threefold modulo $\mu_5$ in characteristic 5 \cite[Thm.~1.2]{lam}.  Like that example, $X/\mu_3$ lifts to characteristic zero, giving another counterexample to a conjecture of Joshi \cite[Conj.~7.7.1]{joshi2}.

\item Our $M$ is weakly ordinary by Proposition \ref{prop:weakly_ordinary_derived_invariant}, and has 3-torsion in its Picard group by Proposition \ref{prop:pictau}(a), answering a question asked by Patakfalvi of the second author.

\item The isogeny class of the reduction $(\sPic^0)_{\red}$ is a derived invariant by \cite[Thm.~A.1]{honigs2}, but our varieties have $\sPic^0_X = 0$ and $\sPic^0_M = \mu_3$, showing that it is necessary to take the reduction.  On the other hand, Rouquier showed that $\Aut^0 \ltimes \sPic^0$ is a derived invariant \cite[Prop.~9.45]{huybrechts_fm}, and in our example the action of $\bZ/6 \times \mu_6$ on $X$ gives a copy of $\mu_3$ in $\Aut^0_X$, while the fact that $h^0(T_M) = h^{2,0}(M) = 0$ gives $\Aut^0_M = 0$.

\item Although $X$ and $M$ have different Hodge numbers, we do not expect them to contradict Orlov's conjecture mentioned earlier \cite[Conj.~1]{orlov_motives}, which says that they should have isomorphic rational Chow motives.

\item Our $M$ raises questions about the right definition of a Calabi--Yau threefold in positive characteristic.  Of course a Calabi--Yau threefold should have trivial canonical bundle, and should not be an Abelian threefold or a product of an elliptic curve and a K3 surface (or non-classical Enriques surface).  We suggest that a Calabi--Yau threefold should be defined (in any characteristic) as one with $\omega \cong \cO$ and $b_1 = 0$.  Requiring $\pi_1 = 0$ would be too restrictive, even in characteristic zero, as discussed in \cite{br_not_inv}.  Requiring $h^{0,*} = 1, 0, 0, 1$, as many authors do, is equivalent to $b_1 = 0$ in characteristic zero, but stronger in characteristic $p$, as our $M$ demonstrates, and we feel that it is also too restrictive: our $M$ lifts to a Calabi--Yau threefold, so it should qualify as one.  The situation is comparable to that of Enriques surfaces, where $h^{0,1}$ can jump in characteristic 2.  All this jumping occurs because Hodge numbers in characteristic $p$ are influenced by torsion in cohomology, unlike in characteristic zero; to obtain a useful definition in all characteristics, Betti numbers seem more robust than Hodge numbers.

\end{enumerate}
\end{remarks}

\begin{questions} \ 
\begin{enumerate}
\addtolength \itemsep \smallskipamount

\item Antieau and Bragg remarked in \cite[Rmk.~5.34]{antieau_bragg} that at least 14 independent linear relations hold among the 16 Hodge numbers of derived equivalent threefolds in characteristic $p \ge 3$.  Our example shows that at most 15 relations hold.  We wonder whether the true number is 14 or 15.

\item Our $X$ deserves further study over $\bC$.  It is a small resolution of an intersection of two special cubic fourfolds, similar to the ones studied by Calabrese and Thomas \cite{calabrese-thomas}.  The reducible fibers of the Abelian fibration $X \to \bP^1$ seem to be unions of two sextic elliptic ruled surfaces, of the kind that appeared in work of Addington, Hassett, Tschinkel, and V\'arilly--Alvarado \cite{ahtva}.  This would imply that the special cubics have discriminant 18, so there should be another Calabi--Yau threefold $Z$ fibered in K3 surfaces of degree 2, a 3-torsion Brauer class $\alpha \in \Br(Z)$, and a derived equivalence $D^b(X) \cong D^b(Z,\alpha)$.

\item Still over $\bC$, it would be interesting to get hold of the Brauer classes on $X$ that come from $\H_1(M,\bZ) = \pi_1(M) = (\bZ/3)^2$ via the exact sequence
\[ 0 \to \H_1(M,\bZ)  \to \Br(X) \to \Br(M) \to 0 \]
discussed in \cite{br_not_inv}.  Do they account for the whole Brauer group of $X$, as Gross and Pavanelli proved in the $(1,8)$-polarized example \cite{grpa}?  One might even try to use these Brauer classes to obstruct rational points over number fields.

\end{enumerate}
\end{questions}

\pagebreak % FIXME: playing with spacing
In Appendix \ref{app:petrov}, Alexander Petrov adapts a construction from his paper \cite{petrov} to obtain, for any prime $p$, a pair of varieties $X_1$ and $X_2$ over $\bar\bF_p$ with $D^b(X_1) \cong D^b(X_2)$ but $h^{0,3}(X_1) \ne h^{0,3}(X_2)$.  His varieties, like ours, are dual Abelian fibrations, and the equivalence is a family version of Mukai's equivalence, but otherwise the flavor is quite different: his fibrations are isotrivial, there is no torsion in crystalline cohomology, and they do not lift to characteristic zero.  The base of his fibration is 4-dimensional, and the fibers are at least 4-dimensional. % If p == 2 or 3 (mod 5) then it's 2-dimensional, otherwise it's bigger.  So half the time.

\subsection*{Acknowledgements}
We began this project at the conference ``Derived categories and geometry in positive characteristic'' in Warsaw in July 2019; we thank the organizers and IMPAN for their hospitality, and Ben Antieau for stimulating initial discussions.  We thank Mark Gross for advice on \cite{grpo}, Richard Thomas and Adrian Langer for advice on framed sheaves, and Ben Young for computer time.  Addington was supported by NSF grant no.\ DMS-1902213.  Bragg was supported by NSF grant no.\ DMS-1902875.

%%%%%%%%%%%%%%%%%%%%%%%%%%%%%%%%%%%%%%%%%%%%%%%%%%%%%%

\section{Construction of \texorpdfstring{$X$}{X} and calculation of its Hodge numbers} \label{sec:construction}

In \cite[\S4]{grpo}, Gross and Popescu studied a complete family of Abelian surfaces in $\bP^5$ parametrized by a smooth quadric threefold $Q$, and the Calabi--Yau threefolds obtained by restricting this family to lines $L \subset Q$.  We begin by reviewing their construction over $\bC$, then analyze what happens when we carry out the same construction over $\bar\bF_3$. \bigskip

First, let $\bZ/6$ act on $\bP^5$ by powers of
\[ \sigma\colon (x_0 : x_1 : \dotsb : x_5) \mapsto (x_5 : x_0 : \dotsb : x_4). \]
Let $\mu_6$ act on $\bP^5$ by powers of
\[ \tau\colon (x_0 : x_1 : \dotsb : x_5) \mapsto (x_0 : \xi^{-1} x_1 : \dotsb : \xi^{-5} x_5), \]
where $\xi$ is a primitive sixth root of unity.  Observe that $\sigma \circ \tau$ and $\tau \circ \sigma$ agree up to a power of $\xi$, so $\sigma$ and $\tau$ commute as automorphisms of $\bP^5$.\footnote{Some readers may be interested to know that this action of $\bZ/6 \times \mu_6$ is related to the Schr\"odinger representation of the Heisenberg group.  For details see Gross and Popescu's paper, or \cite[Ch.~6]{birkenhake_lange} for a textbook account.}

The space of cubic polynomials invariant under the action of $\sigma^2$ and $\tau^2$ is 8-dimensional, spanned by
\begin{align*}
f_0 &= x_0^3 + x_2^3 + x_4^3 \\
f_1 &= x_1^2 x_4 + x_3^2 x_0 + x_5^2 x_2 \\
f_2 &= x_1 x_2 x_3 + x_3 x_4 x_5 + x_5 x_0 x_1 \\
f_3 &= x_0 x_2 x_4
\end{align*}
and $\sigma f_0, \dotsc, \sigma f_3$.

\pagebreak % FIXME: playing with spacing
Consider the rational map
\[ \phi\colon \bP^5 \dashrightarrow \Gr(2,4) \subset \bP^5 \]
given by the $2 \times 2$ minors of the matrix
\begin{equation} \label{matrix_formerly_known_as_M}
\begin{pmatrix}
f_0 & f_1 & f_2 & f_3 \\
\sigma f_0 & \sigma f_1 & \sigma f_2 & \sigma f_3
\end{pmatrix}.
\end{equation}
This takes values in a linear section $Q \subset \Gr(2,4)$ because of the relation
\[ f_0 \cdot \sigma f_3 - f_3 \cdot \sigma f_0 = f_2 \cdot \sigma f_1 - f_1 \cdot \sigma f_2. \]
Let $\cX \subset \bP^5 \times Q$ be the graph of $\phi$; then $\pi_2\colon \cX \to Q$ is the desired family of Abelian surfaces.

Given a line $L \subset Q$, the Abelian-fibered threefold $X := \pi_2^{-1}(L)$ will turn out to be Calabi--Yau.  The variety of lines on $Q$ is isomorphic to $\bP^3$, and for a point
\[ p = (p_0 : p_1 : p_2 : p_3) \in \bP^3, \]
we can describe the line $L$ and threefold $X$ explicitly.  Let
\[ f_p = p_0 f_0 + p_1 f_1 + p_2 f_2 + p_3 f_3, \]
and let $Y \subset \bP^5$ be the complete intersection cut out by $f_p$ and $\sigma f_p$.  The line $L$ is $\phi(Y) \subset Q$.  Even more explicitly, the six sextics given by the $2 \times 2$ minors of \eqref{matrix_formerly_known_as_M} span a 5-dimensional space, of which three vanish on $Y$; the two remaining sextics give a rational map $Y \dashrightarrow \bP^1 = L$.  Then $X \subset Y \times \bP^1$ is the graph of this rational map, with projections
\[ \xymatrix{
X \ar[r]^-{\pi_2} \ar[d]_-{\pi_1} & \bP^1 \\
Y.
} \]
Our notation mostly matches Gross and Popescu's, except that our $Y$ is their $V_{6,p}$, and our $X$ is their $V^1_{6,p}$.

Gross and Popescu proved in \cite[Thm.~4.10]{grpo} that if the point $p \in \bP^3$ is chosen generically, then $Y$ has 72 ordinary double points and the map $\pi_1\colon X \to Y$ is a small resolution of singularities, so $X$ is a smooth Calabi--Yau threefold.  They calculated the Hodge diamond of $X$ to be the one given in the introduction \eqref{eq:1661}.  We will end up re-confirming these facts for a particular choice of $p \in \bP^3$.\bigskip

Now given a point $p \in \bP^3$ defined over $\bar\bF_3$ (or indeed any field), we can use the same equations to get a complete intersection $Y \subset \bP^5$, a rational map $Y \dashrightarrow \bP^1$, and its graph $X \subset Y \times \bP^1$.

For definiteness we will choose particular values of $p$ over $\bar\bF_3$ and $\bC$.  We expect that the set of $p$ for which the claims in this paper hold is Zariski open, but to prove this would require substantially more work.  Experimentally, we find that they hold for a few $p$ defined over $\bF_9$, for most $p$ defined over $\bF_{27}$, for nearly all $p$ defined over $\bF_{81}$ and higher, but unfortunately not for any $p$ defined over $\bF_3$.  We also find that they hold for nearly all $p$ defined over $\bQ$.\footnote{We will confine our attention to characteristic 0 and 3, but the behavior in characteristic $\ge 5$ seems to be the same as the behavior in characteristic 0, while the behavior in characteristic 2 is very different: for example, the singularities of $Y$ are not isolated, and $X$ is not smooth.}

\begin{convention} \ 
\begin{enumerate}
\addtolength \itemsep \smallskipamount

\item When we discuss $X$ and $Y$ over $\bar\bF_3$, we take
\[ p = (1 : \alpha+1 : -\alpha : -\alpha) \in \bP^3, \]
where $\alpha$ is a root of $x^2 + 2x + 2$.  (This is the Conway polynomial used to construct $\bF_9$ in most computer algebra systems.)

\item When we discuss $X$ and $Y$ over $\bC$, we take
\[ p = (1 : i : 1-i : 1-i) \in \bP^3. \]
We point out that if we regard this choice as being defined over $\bZ[i]$, then its reduction modulo 3 gives back our earlier choice over $\bF_9$, because $i-1$ is a root of $x^2+2x+2$.

\item When we do not specify the field, we are making a claim about both of these.
\end{enumerate}
\end{convention}

\begin{proposition} \label{sec2_prop} \
\begin{enumerate}
\addtolength \itemsep \smallskipamount

\item $Y$ has isolated singularities.

\item The map $\pi_1\colon X \to Y$ is a small resolution of singularities: it is an isomorphism over the smooth part of $Y$, the fibers over the singular points are 1-dimensional, and $X$ is smooth.  Thus the canonical bundle $\omega_X \cong \cO_X$.

\item $\R \pi_{1*} \cO_X = \cO_Y$, and in particular, $h^*(\cO_X) = 1, 0, 0, 1$.

\item Over $\bC$ we have $h^*(T_X)= 0, 6, 6, 0$. \\ Over $\bar\bF_3$ we have $h^*(T_X) = 1, 8, 7, 0$.

\end{enumerate}

\end{proposition}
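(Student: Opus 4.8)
The plan is to reduce every assertion, for the explicitly chosen $p$, to a finite computation in the homogeneous coordinate ring of $\bP^5$---carried out with a Gr\"obner-basis package over the exact fields $\bF_9 \subset \bar\bF_3$ and $\bQ(i) \subset \bC$---together with a few structural facts about small resolutions of nodal complete intersections. For (a) I would apply the Jacobian criterion: $\mathrm{Sing}(Y)$, for $Y = V(f_p,\sigma f_p)$, is cut out by $f_p$, $\sigma f_p$, and the $2\times 2$ minors of the $2\times 6$ Jacobian matrix $\partial(f_p,\sigma f_p)/\partial(x_0,\dots,x_5)$. Saturating this ideal against the irrelevant ideal and computing a Gr\"obner basis, I would check that the quotient is finite-dimensional, so that $\mathrm{Sing}(Y)$ is $0$-dimensional, and in the same computation read off the points and verify that the Hessian of $(f_p,\sigma f_p)$ is nondegenerate at each, so that they are ordinary double points.

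For (b) and (c) I would use that $X$ is the closure of the graph of $\psi = [s_0:s_1]\colon Y \dashrightarrow \bP^1$, where $s_0,s_1$ are the two sextic minors not vanishing on $Y$. Away from the base locus $B = V(s_0,s_1)\cap Y$ the map $\psi$ is a morphism and $\pi_1$ is an isomorphism, so I would check by computation that $B = \mathrm{Sing}(Y)$, giving an isomorphism over the smooth part of $Y$. At a node, which is analytically $\{xy = zw\}$, the two small resolutions are the graphs of $[x:z]$ and of $[x:w]$, each smooth and each replacing the point by a $\bP^1$; verifying that $(s_0,s_1)$ restricts near each node to such a pair of local equations simultaneously shows that the fibre of $\pi_1$ is a $\bP^1$ and that $X$ is smooth there, hence $X$ is smooth everywhere. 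As a small morphism $\pi_1$ is crepant, and $\omega_Y\cong\cO_Y$ by adjunction for the $(3,3)$ complete intersection, so $\omega_X \cong \pi_1^*\omega_Y \cong \cO_X$. For (c): $Y$ is a complete intersection with isolated singularities, hence Cohen--Macaulay and regular in codimension one, so normal, whence $\pi_{1*}\cO_X = \cO_Y$; the higher direct images are supported at the nodes and vanish because threefold ordinary double points are rational singularities in every characteristic. Thus $\R\pi_{1*}\cO_X = \cO_Y$ and $h^*(\cO_X) = h^*(\cO_Y)$, and the latter I would read off from the Koszul complex $0 \to \cO(-6) \to \cO(-3)^{\oplus 2} \to \cO \to \cO_Y \to 0$, which gives $1,0,0,1$ in any characteristic.

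Part (d) is the crux. Since $\omega_X\cong\cO_X$ we have $T_X \cong \Omega^2_X$, so $h^q(T_X) = h^{2,q}$, and here the two characteristics genuinely diverge. I would compute $H^*(X,T_X)$ directly from the saturated bihomogeneous ideal of $X \subset \bP^5\times\bP^1$ via its cotangent sequence (the saturation at $B$ being the delicate input), cross-checked by pushing forward along $\pi_1$: Leray for $\R\pi_{1*}T_X$, with $\pi_{1*}T_X$ the tangent sheaf of $Y$ and correction terms $R^i\pi_{1*}T_X$ ($i>0$) supported at the nodes, reduces the computation to cohomology on $Y\subset\bP^5$ accessible through the Euler and conormal sequences. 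The characteristic-$3$ phenomenon---the extra class giving $h^0(T_X) = h^{2,0} = 1$---is the vector field coming from the Lie algebra of the infinitesimal group scheme $\mu_3 = \langle\tau^2\rangle$, which preserves $Y$ because the cubics are $\tau^2$-invariant and therefore lifts to the small resolution $X$; this is precisely the class absent over $\bC$, where $\mu_3$ is \'etale.

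I expect the main obstacle to be making the positive-characteristic computation both correct and rigorous. One must confirm that, for the chosen $p$ over $\bar\bF_3$, the singularities are genuinely ordinary double points and $X$ is genuinely smooth---properties that hold generically but must be certified for this special point---and one must pin down the exact jumps $h^*(T_X)\colon 0,6,6,0 \rightsquigarrow 1,8,7,0$, not merely the semicontinuity inequalities supplied by the mixed-characteristic family over $\bZ[i]$ whose generic and special fibres are our two threefolds. The constancy $\chi(T_X) = 0$ across characteristics gives a convenient arithmetic check on the final dimensions.
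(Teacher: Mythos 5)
Your overall architecture (certify everything by Gr\"obner-basis computation for the explicitly chosen $p$, Jacobian criterion for (a), Koszul complex for $h^*(\cO_Y)$, and $T_X \cong \Omega^2_X$ via $\omega_X \cong \cO_X$) matches the paper, and your identification of the extra vector field in characteristic 3 with the Lie algebra of $\mu_3 = \langle\tau^2\rangle$ is exactly the paper's Remark 2.5. But two of your steps have genuine gaps. First, your proofs of (b) and (c) lean on the singularities of $Y$ being ordinary double points: you use the local model $\{xy=zw\}$ to see that $X$ is smooth with $\bP^1$-fibers, and rationality of threefold nodes to kill $R^{>0}\pi_{1*}\cO_X$. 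The paper never establishes this over $\bar\bF_3$, and its own computations counsel caution: the scheme $Z$ cut out by the $2\times 2$ minors of the Jacobian has length $72$ over $\bC$ but length $144$ over $\bar\bF_3$, a symptom of the fact that derivatives of cubics degenerate in characteristic $3$ (e.g.\ $d(x_0^3+x_2^3+x_4^3)=0$), so your planned Hessian check may fail or be inconclusive. The paper sidesteps the issue entirely: smoothness of $X$ is verified by applying the Jacobian criterion to $\pi_2\colon X\to\bP^1$ along $\pi_1^{-1}(Z)$, and $\R\pi_{1*}\cO_X=\cO_Y$ is obtained by pushing forward an explicit free resolution of $\cO_X$ on $\bP^5\times\bP^1$ termwise, with no hypothesis whatsoever on the singularity type.

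Second, and more seriously, part (d) as you describe it does not go through. The authors state outright that computing $\H^*(T_X)$ directly on $\bP^5\times\bP^1$ is infeasible, and your fallback---Leray with ``correction terms $R^i\pi_{1*}T_X$ supported at the nodes''---is precisely where the characteristic-$3$ phenomenon lives, yet you give no way to compute those terms. The paper's device is to take the tangent complex of the morphism $\pi_1$, prove via the Buchsbaum--Eisenbud acyclicity criterion (exactness off a codimension-$2$, hence depth-$2$, locus) that $\R\pi_{1*}T_X$ is quasi-isomorphic to the explicit three-term complex $\underline{T_{\bP^5}|_Y}\to\cO_Y(3)^2\to\cO_{Z'}$, and then run the hypercohomology spectral sequence. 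The crux is that the subscheme $Z'$ defined by $\pi_1^{-1}(Z)=Z'\times\bP^1$ equals $Z$ over $\bC$ but has length $72$ versus $144$ over $\bar\bF_3$, so the complex is concentrated in degree $0$ (and equals $T_Y$) over $\bC$ but not over $\bar\bF_3$; this, together with a rank computation that requires twisting by a form of degree $\ge 2$ because the relevant Hilbert function has not yet stabilized, is what yields $1,8,7,0$ rather than $0,6,6,0$. Your $\mu_3$ vector field explains why $h^0(T_X)\ge 1$, but without this machinery you cannot certify the exact values, and $\chi(T_X)=0$ only constrains the alternating sum.
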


\begin{proof}
Our proof relies on the computer algebra system Macaulay2 \cite{M2}, using the code in the ancillary file \verb|verify.m2|.  For readers who prefer Magma \cite{magma}, we also provide \verb|verify.magma|, which cannot do everything we need, but is faster at the things it can do. \smallskip

(a) Let $Z \subset Y$ be the subscheme cut out by the $2 \times 2$ minors of the Jacobian matrix
\begin{equation} \label{jacobian}
J = \begin{pmatrix}
\partial f_p/\partial x_0 & \dots & \partial f_p/\partial x_5 \\
\partial(\sigma f_p)/\partial x_0 & \dots & \partial(\sigma f_p)/\partial x_5 \\
\end{pmatrix}.
\end{equation}
The singular locus of $Y$ is the reduction $Z_\text{red}$.  With Macaulay2 we find that $Z$ is zero-dimensional, so $Y$ has isolated singularities. \smallskip

(b) With Macaulay2 we find that the map $\pi_1\colon X \to Y$ is smooth of relative dimension zero away from $Z \subset Y$ using the Jacobian criterion.  The equations cutting out $X \subset Y \times \bP^1$ are linear in the $\bP^1$ variables, so $\pi_1\colon X \to Y$ has degree 1 away from $Z$, so it is an isomorphism there.  Next we find that $\pi_2\colon X \to \bP^1$ is smooth along $\pi_1^{-1}(Z)$, so $X$ is smooth at every point.

Because $Y$ is an intersection of two cubics in $\bP^5$, the adjunction formula gives $\omega_Y \cong \cO_Y$, so the line bundles $\omega_X$ and $\cO_X$ agree on the open set $U \subset X$ where $\pi_1$ is an isomorphism.  The complement $X \setminus U$ has codimension 2, so $\omega_X \cong \cO_X$. \smallskip

(c) Macaulay2 gives a free resolution of $\cO_X$ as a sheaf on $\bP^5 \times \bP^1$:
\begin{align} 
0 \to \cO(-9,-1)^2 &\to \cO(-8,-1)^9 \notag \\
&\to \cO(-7,-1)^{12} \oplus \cO(-6,-1)^8 \notag \\
& \to \cO(-6,-1)^3 \oplus \cO(-5,-1)^{18} \oplus \cO(-6,0) \label{eq:res} \\
& \to \cO(-4,-1)^6 \oplus \cO(-3,-1)^2 \oplus \cO(-3,0)^2 \notag \\
& \hspace{2in} \to \cO \to \cO_X \to 0. \notag
\end{align}
The terms are $\pi_{1*}$-acyclic, so we can compute $\R\pi_{1*} \cO_X$ by applying $\pi_{1*}$ termwise.  The terms of the form $\cO(*, -1)$ drop out, leaving the Koszul resolution of $\cO_Y$:
\[ 0 \to \cO_{\bP^5}(-6) \to \cO_{\bP^5}(-3)^2 \to \cO_{\bP^5} \to \cO_Y \to 0. \]
So we have $\R\pi_{1*} \cO_X = \cO_Y$, and thus $\H^*(\cO_X) = \H^*(\cO_Y)$.  We compute $\H^*(\cO_Y)$ using the Koszul resolution. \smallskip

(d) It is infeasible for the computer to find $\H^*(T_X)$ directly.  Instead we will produce a complex of sheaves on $Y$ that is quasi-isomorphic to $\R\pi_{1*} T_X$, and then apply the hypercohomology spectral sequence.

First we study the cotangent complex $L_Y$ of $Y$, or rather the tangent complex $L_Y^\vee$.  It is a 2-term complex of vector bundles
\[ \underline{T_{\bP^5}|_Y} \to \cO_Y(3)^2, \]
where the underlined term is in degree zero.  Or if we replace $T_{\bP^5}$ with its Euler resolution, then $L_Y^\vee$ is quasi-isomorphic to the three-term complex
\[ \cO_Y \xrightarrow{\left(\begin{smallmatrix} x_0 \\ \vphantom{\int\limits^x}\smash{\vdots} \\ x_5 \end{smallmatrix} \right)} % found this hack online to make vdots look good in a smallmatrix
\underline{\cO_Y(1)^6} \xrightarrow{J} \cO_Y(3)^2, \]
where $J$ is the Jacobian matrix that appeared in \eqref{jacobian}.  In the proof of (a) above we studied the support of $\coker J$, which we called $Z \subset Y$.  Using Macaulay2, we find that the rank of $J$ never drops to 0 on $Y$, so $\coker J$ is just $\cO_Z$.

Next we study the tangent complex of the morphism $\pi_1\colon X \to Y$.  It is the cone on $T_X \to \pi_1^* L_Y^\vee$, which is a 3-term complex of vector bundles
\[ 0 \to T_X \to \underline{\pi_1^* T_{\bP^5}} \to \pi_1^* \cO_Y(3)^2 \to 0. \]
This complex is exact on the open set $U \subset X$ where $\pi_1$ is an isomorphism.  The complement $X \setminus U$ has codimension 2, hence depth 2 because $X$ is smooth and thus Cohen--Macaulay, so by a theorem of Buchsbaum and Eisenbud \cite[Thm.~20.9]{eisenbud}, the complex is exact except perhaps at the right.  And indeed we see that the cokernel of the last map is
\[ \pi_1^* \coker J = \pi_1^* \cO_Z = \cO_{\pi_1^{-1}(Z)}. \]

With Macaulay2 we find that $\pi^{-1}(Z)$ is of the form $Z' \times \bP^1$ for a subscheme $Z' \subset Y$ with the same support as $Z$.  In fact over $\bC$ we have $Z' = Z$, but over $\bar\bF_3$ they are different: $Z$ has length 144, while $Z'$ has length 72.

Thus $T_X$ is quasi-isomorphic to the complex
\[ \underline{\pi_1^* T_{\bP^5}} \to \pi_1^* \cO_Y(3)^2 \to \cO_{Z' \times \bP^1}. \]
The terms are acyclic for $\pi_{1*}$, as we see using the adjunction formula and the fact that $\R\pi_{1*} \cO_Y = \cO_X$.  So $\R\pi_{1*} T_X$ is quasi-isomorphic to the complex
\[ \underline{T_{\bP^5}|_Y} \to \cO_Y(3)^2 \to \cO_{Z'}. \]
We remark that over $\bC$, where $Z' = Z$, this means that $\R\pi_{1*} T_X$ is just the tangent sheaf $T_Y := \mathcal \H^0(L_Y^\vee)$, whose cohomology can in fact be computed with Macaulay2, although it is fairly slow.

Replacing $T_{\bP^5}$ with its Euler resolution, we get a complex
\[ \cO_Y \to \underline{\cO_Y(1)^6} \xrightarrow{J} \cO_Y(3)^2 \to \cO_{Z'}, \]
where the last map is the composition of the map $\cO_Y(3)^2 \twoheadrightarrow \coker J = \cO_Z$ and the map $\cO_Z \twoheadrightarrow \cO_{Z'}$.  We will compute $\R\Gamma(T_X) = \R\Gamma(\R\pi_{1*} T_X)$ using the hypercohomology spectral sequence, which for a complex $A^\bullet$ and a left-exact functor $F$ reads
\[ \E_1^{i,j} = \R^j F(A^i) \Rightarrow \R^{i+j}F(A^\bullet). \]
With $F = \Gamma$ applied to our complex, the $\E_1$ page is
\[ \begin{array}{c|ccccccc}
j=3 & k \\
j=2 & 0 \\
j=1 & 0 \\
j=0 & k & \to & k^{36} & \to & k^{108} & \to & k^{72} \\
\hline
& i=-1 && i=0 && i=1 && i=2,
\end{array} \]
where $k$ is either $\bC$ or $\bar\bF_3$.  In the bottom row, the map $k \to k^{36}$ must be injective: otherwise the kernel would contribute to $\H^{-1}(T_X)$.  It is straight\-forward to compute the rank of the map $k^{36} \to k^{108}$ map using Macaulay2: it is 35 over $\bC$ or 34 over $\bar\bF_3$.

The map $k^{108} \to k^{72}$ is more subtle.  The computer works on the coordinate ring of $Y$,  that is, on $S_Y := k[x_0,\dotsc,x_5]/(f_p,\sigma f_p)$.  While the Hilbert polynomial of the module $N := \coker J \otimes S_Y/I_{Z'}$ is constantly 72, the Hilbert function of $N$ does not stabilize immediately; over $\bC$ it only becomes constantly 72 in degree $d \ge 1$, and over $\bar\bF_3$, in degree $d \ge 2$.  Thus we need to compose the natural map $S(3)^2 \to N$ with a map $N \to N(d)$ given by multiplication with some homogeneous $g \in S_Y$ of degree $d \ge 2$ that does not vanish on $Z'$.  With this done, we find that the rank is 67 over $\bC$ or 66 over $\bar\bF_3$.

Now from the $\E_2$ page on, there is nowhere for the differentials to go, so the spectral sequence degenerates, giving $h^*(T_X) = 0, 6, 6, 0$ over $\bC$ or $1, 8, 7, 0$ over $\bar\bF_3$.
\end{proof}

\begin{remark}
It may be surprising that $h^0(T_X) = 1$ when we work over $\bar\bF_3$, meaning that $X$ carries a non-trivial vector field, because this cannot happen for Calabi--Yau threefolds defined over $\bC$.  The vector field comes from the action of $\mu_3$ on $X$ generated by $\tau^2$, which will turn out to be free (Proposition \ref{prop:fixed_and_free}), and yields an injection $\mu_3 \hookrightarrow \Aut(X)$ as we mentioned in Remark \ref{rem:intro}(c).  Over $\bar\bF_3$, the finite group scheme $\mu_3$ is non-reduced, and the tangent space at the identity is 1-dimensional, giving a non-zero element of $\H^0(T_X)$.  Over the complex numbers, $\mu_3$ still acts freely on $X$, but it is a discrete group isomorphic to $\bZ/3$.
\end{remark}

\begin{corollary}\label{cor:hodge numbers of X}
Over $\bar\bF_3$, the Hodge numbers of $X$ are as claimed in Theorem \ref{main_thm}. Over $\bC$, they are as shown in \eqref{eq:1661}.
\end{corollary}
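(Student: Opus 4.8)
The plan is to bootstrap from Proposition \ref{sec2_prop}, which already pins down $h^*(\cO_X) = 1,0,0,1$ in part (c) and $h^*(T_X)$ in part (d), and then fill in the remaining entries of the Hodge diamond using Serre duality, the relation between $T_X$ and $\Omega^2_X$ available on a Calabi--Yau threefold, and the basic symmetries $h^{i,j} = h^{3-i,3-j}$. Since $X$ is smooth projective of dimension $3$ with $\omega_X \cong \cO_X$ by Proposition \ref{sec2_prop}(b), Serre duality gives $h^{i,j} = h^{3-i,3-j}$, so it suffices to determine one representative from each antidiagonal pair. The triviality of $\omega_X$ also gives a canonical isomorphism $T_X \cong \Omega^2_X \otimes \omega_X^{-1} \cong \Omega^2_X$ (equivalently $\wedge^2 \Omega^1_X \cong \Omega^1_X{}^\vee$ via the trivialization of $\wedge^3\Omega^1_X$), so $h^j(T_X) = h^j(\Omega^2_X) = h^{2,j}$.

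First I would read off $h^{0,j} = h^j(\cO_X)$ directly from Proposition \ref{sec2_prop}(c), giving the outer diagonal $h^{0,0},\dots,h^{0,3} = 1,0,0,1$; by Serre duality this simultaneously gives $h^{3,j} = h^{0,3-j}$, i.e.\ $h^{3,0},\dots,h^{3,3} = 1,0,0,1$, matching the $h^{3,0}=h^{0,3}=1$ and $h^{3,3}=h^{0,0}=1$ corners. Next I would use the identification $h^{2,j} = h^j(T_X)$ from the previous paragraph: over $\bar\bF_3$ this reads $h^{2,0},\dots,h^{2,3} = 1,8,7,0$, and Serre duality then gives $h^{1,j} = h^{2,3-j}$, namely $h^{1,0},\dots,h^{1,3} = 0,7,8,1$. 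Over $\bC$ the same procedure with $h^*(T_X) = 0,6,6,0$ yields $h^{2,\bullet} = 0,6,6,0$ and $h^{1,\bullet} = 0,6,6,0$. Assembling these against the labelled diamond in Theorem \ref{main_thm} reproduces every entry of the $X$ column, and against \eqref{eq:1661} reproduces the complex diamond.

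The only genuinely non-formal point is the identification $T_X \cong \Omega^2_X$, which I would justify by noting that the pairing $\Omega^1_X \otimes \Omega^2_X \to \Omega^3_X = \omega_X \cong \cO_X$ is perfect (since $\Omega^1_X$ is locally free of rank $3$ on the smooth variety $X$ and the wedge pairing into the top exterior power is nondegenerate), so $\Omega^2_X \cong (\Omega^1_X)^\vee = T_X$ canonically using the chosen trivialization of $\omega_X$. The main obstacle I anticipate is bookkeeping rather than mathematics: I must take care that Serre duality in characteristic $p$ requires only smoothness and properness (both supplied by Proposition \ref{sec2_prop}(b)) and involves no Hodge-symmetry input of the form $h^{i,j} = h^{j,i}$, which genuinely fails here and is exactly the phenomenon the paper exploits. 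Indeed the asymmetry of the $X$ diamond over $\bar\bF_3$ (e.g.\ $h^{1,0}=0$ but $h^{0,1}=0$ while $h^{2,0}=1 \ne 0 = h^{0,2}$) is consistent only because I never invoke conjugate symmetry; with the two inputs from Proposition \ref{sec2_prop}(c),(d) and Serre duality alone, all sixteen entries are forced.
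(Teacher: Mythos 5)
Your proof is correct and is essentially identical to the paper's: both use $\omega_X \cong \cO_X$ to identify $\Omega^2_X$ with $T_X$, read off $h^{2,\bullet}$ from Proposition \ref{sec2_prop}(d) and $h^{0,\bullet}$, $h^{3,\bullet}$ from Proposition \ref{sec2_prop}(c), and fill in $h^{1,\bullet}$ by Serre duality. (Only a cosmetic slip: in your final parenthetical the intended example of asymmetry is $h^{2,0}=1\ne 0=h^{0,2}$; the clause ``$h^{1,0}=0$ but $h^{0,1}=0$'' is not an asymmetry.)
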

\begin{proof}
By Proposition \ref{sec2_prop}(b), we have $\omega_X \cong \cO_X$, and therefore $\Omega^2_X \cong T_X$.  The claimed values of $h^*(\Omega^2_X)$ follow from Proposition \ref{sec2_prop}(d), and those of $h^*(\Omega_X)$ follow by Serre duality.  The Hodge numbers $h^*(\cO_X)=h^*(\Omega^3_X)$ are given in Proposition \ref{sec2_prop}(c).
\end{proof}

%%%%%%%%%%%%%%%%%%%%%%%%%%%%%%%%%%%%%%%%%%%%%%%%%%%%%% 

\section{Construction of \texorpdfstring{$M$}{M} and the derived equivalence} \label{sec:constr_of_M}

In the last section we constructed a Calabi--Yau threefold $X \subset \bP^5 \times \bP^1$, defined over either $\bC$ or $\bar\bF_3$.  The projection $\pi_2\colon X \to \bP^1$ has many sections coming from $Z' \times \bP^1 \subset X$, where $Z' \subset Y \subset \bP^5$ is a subscheme of length 72 that appeared in the proof of Proposition \ref{sec2_prop}.  In this section we show that $\pi_2$ is an Abelian surface fibration, we will construct a dual Abelian surface fibration $M \to \bP^1$, depending on a choice of section of $\pi_2$, and we will show that there is a universal sheaf on $X \times_{\bP^1} M$ that induces an equivalence $D^b(X) \cong D^b(M)$ following Schnell's argument in \cite[\S5]{schnell}.

\begin{proposition} \label{prop:ab_fib}
The map $\pi_2\colon X \to \bP^1$ is flat.  A general fiber is smooth, and is an Abelian surface.\end{proposition}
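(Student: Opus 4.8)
The plan is to treat the three assertions --- flatness, smoothness of the general fiber, and the abelian surface structure --- in turn. Flatness will be essentially formal, but the last two steps require genuine care because we are in characteristic $3$, where smoothness of the generic fiber does not come for free.

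For flatness I would argue directly, avoiding any fiber-dimension computation. By Proposition~\ref{sec2_prop} the threefold $X$ is smooth and (since $h^0(\cO_X)=1$) connected, hence integral, and $\pi_2$ is plainly non-constant, so it is dominant onto the smooth curve $\bP^1$. A dominant morphism from an integral scheme to a regular one-dimensional base is automatically flat: flatness over a Dedekind base is equivalent to torsion-freeness of $\cO_X$ over the pullback of $\cO_{\bP^1}$, and this holds because $\cO_X$ is a domain in which no nonzero function pulled back from $\bP^1$ vanishes. Thus $\pi_2$ is flat.

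The delicate point is that a \emph{general} fiber is smooth. Over $\bC$ this would be immediate from generic smoothness, since $X$ is smooth; but generic smoothness can fail in characteristic $p$, so I cannot invoke it. Instead I would use that the smooth locus of $\pi_2$ is open in $X$ and $\pi_2$ is proper, so the set of $t\in\bP^1$ with $X_t$ smooth is the complement of a closed subset and hence open; it therefore suffices to exhibit a \emph{single} smooth fiber. Every closed point of $\bP^1_{\bar\bF_3}$ has residue field $\bar\bF_3$, which is perfect, so at such a point smoothness of $X_t$ coincides with regularity and can be checked by the Jacobian criterion in Macaulay2 for one explicit value of $t$; openness then upgrades this to the general fiber. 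At the same time, since $\omega_X\cong\cO_X$ by Proposition~\ref{sec2_prop}(b) and the normal bundle of a fiber of $\pi_2$ is trivial, adjunction gives $\omega_{X_t}\cong\cO_{X_t}$ for the smooth fibers.

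It remains to recognize a smooth fiber $A=X_t$ as an abelian surface. It is a smooth projective surface with $\omega_A\cong\cO_A$, hence minimal of Kodaira dimension $0$. By the Bombieri--Mumford classification of surfaces in positive characteristic, a minimal surface with trivial canonical bundle is a K3 surface, an abelian surface, or (only in characteristic $2$) a non-classical Enriques surface; in characteristic $3$ only the first two occur, and they are distinguished by $b_1$, which is $0$ for a K3 surface and $4$ for an abelian surface. I would therefore compute $h^1(\cO_A)=2$, either from a resolution of $\cO_A$ in Macaulay2 or by restricting the computation of $\R\pi_{1*}\cO_X$ from the previous section, to rule out the K3 case. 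As a cross-check one can appeal to the Gross--Popescu construction itself, in which $\cX\to Q$ is by design a family of $(1,6)$-polarized abelian surfaces: it then suffices to know that the chosen line $L$ meets the locus of smooth abelian fibers, which is witnessed by the smooth fiber produced above. The main obstacle throughout is the characteristic-$3$ failure of generic smoothness, which forces us to witness smoothness by an explicit fiber rather than deduce it from the smoothness of $X$, and to separately compute $h^1(\cO_A)$ to be sure the smooth fibers are abelian and not K3.
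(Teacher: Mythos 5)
Your flatness argument and your strategy of exhibiting a single explicit smooth fiber with Macaulay2 (then using openness of the smooth locus) match the paper's proof. The gap is in the final step, where you identify the smooth fiber $A$ as an abelian surface. Your appeal to the Bombieri--Mumford classification is incomplete: in characteristic $3$ a minimal surface with $\omega \cong \cO$ and $\chi(\cO)=0$ need not be a K3 surface or an abelian surface --- it can also be a \emph{quasi-hyperelliptic} surface, a class that exists precisely in characteristics $2$ and $3$ and includes surfaces with trivial canonical bundle. (The K3 case is already excluded by $\chi(\cO_A)=0$, since a K3 has $\chi(\cO)=2$.) Worse, your proposed invariant does not separate the two remaining cases: a quasi-hyperelliptic surface with trivial canonical bundle has $h^0(\cO)=h^2(\cO)=1$ and $\chi(\cO)=0$, hence $h^1(\cO)=2$, exactly as for an abelian surface. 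So computing $h^1(\cO_A)=2$ proves nothing beyond ruling out K3.

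The paper resolves this by computing $b_2$, which is $6$ for an abelian surface and $2$ for a quasi-hyperelliptic surface, and invoking \cite[Thm.~6]{bombieri-mumford2}. Since $b_2$ is not directly computable by commutative algebra over $\bar\bF_3$, the argument lifts $X$ to the Witt vectors $W$ (the defining equations lift because the parameter $p\in\bP^3$ does), checks that every fiber of $X_k\to\bP^1_k$ lifts to a fiber of $X_K\to\bP^1_K$ by a dimension count, observes that the lifted smooth fibers are abelian surfaces in characteristic zero (where your classification argument is valid) and hence have $b_2=6$, and then uses smooth-proper invariance of $\ell$-adic Betti numbers to conclude $b_2(A)=6$. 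Your ``cross-check'' via the Gross--Popescu construction gestures at this but is only a characteristic-zero statement; to make it a proof you need the specialization argument above or some other way of pinning down $b_2(A)$ over $\bar\bF_3$.
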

\begin{proof}
Flatness follows from \cite[III Prop.~9.7]{hartshorne}, because $X$ is integral, $\bP^1$ is a smooth curve, and $\pi_2$ has sections and thus is dominant.

With Macaulay2 we find a smooth fiber $A$ of $\pi_2$, continuing to use the ancillary file \verb|verify.m2|.  The normal bundle of $A$ in $X$ is trivial, because it is the fiber of a map to a smooth curve, so by the adjunction formula we have $\omega_A = \cO_A$, and the resolution \eqref{eq:res} gives $\chi(\cO_A) = 0$.  Over $\bC$, this implies that $A$ is an Abelian surface.  Over $\bar\bF_3$, Bombieri and Mumford's classification \cite{bombieri-mumford2} implies that $A$ is either an Abelian surface or a quasi-hyperelliptic surface.  We will show that the former holds by computing the second Betti number of $A$.

Let $k = \bar\bF_3$, let $W$ be the ring of Witt vectors of $k$, and let $K$ be the field of fractions of $W$.  Our point $p \in \bP^3$ defined over $k$ lifts to a point defined over $W$, so our equations for the threefold $Y$ and for the rational map $Y \dashrightarrow \bP^1$ lift to $W$, so the graph $X$ of the rational map lifts as well.  Let $X_W \to \Spec(W)$ be the lift.  The special fiber $X = X_k$ is smooth, and the general fiber $X_K$ is 3-dimensional, so it is smooth as well.

The fibers of $X_k \to \bP^1_k$ are 2-dimensional, so the fibers of $X_K \to \bP^1_K$ are at most 2-dimensional by semicontinuity, and at least 2-dimensional because they are fibers of a map from a threefold to a smooth curve.  Thus every fiber of $X_k \to \bP^1_k$ lifts to characteristic zero, and the smooth fibers lift to smooth surfaces with $\omega = \cO$ and $\chi(\cO) = 0$.  Thus the fibers over $K$ are Abelian surfaces and have $b_2 = 6$, so the fibers over $k$ also have $b_2 = 6$, so they are Abelian surfaces by \cite[Thm.~6]{bombieri-mumford2}.
\end{proof}

We want to construct a dual Abelian surface fibration: precisely, let $V \subset \bP^1$ be the open set over which $\pi_2$ is smooth, let $X^\circ = \pi_2^{-1}(V) \subset X$, and let $M^\circ \to V$ be the relative $\sPic^0$ of $X^\circ \to V$; then we want to construct $M \to \bP^1$ as a compactification of $M^\circ$.

If the fibers of $\pi_2$ were all integral, we could take $M$ to be the closure of $M^\circ$ in the moduli space of semistable sheaves of rank 1 on the fibers of $\pi_2$.  Then all the sheaves would in fact be stable, and because $\pi_2$ has sections, there would be a universal sheaf on $X \times_{\bP^1} M$, which would eventually induce a derived equivalence $D^b(X) \cong D^b(M)$.  But $\pi_2$ has some reducible fibers, and we are unable to avoid properly semistable sheaves, even by varying the polarization.

We can overcome this issue by choosing a preferred section $\Sigma \cong \bP^1 \subset X$ of the projection $\pi_2$.  Really we want to define $M$ as the closure of $M^\circ$ in a moduli space of framed sheaves, where a point of the moduli space parametrizes a pure sheaf $L$ on the fiber $A \subset X$, with the same Hilbert polynomial as $\cO_A$, together with a surjection $L \twoheadrightarrow \cO_{A \cap \Sigma}$.  But the literature on framed sheaves only deals with smooth varieties in characteristic 0; perhaps it is straightforward to adapt the theory to families of varieties, with reducible fibers, in positive characteristic, but we do not want to carry out that foundational work, nor to leave it as an exercise to the reader.  Instead we will blow up the section and take the closure of $M^\circ$ in a moduli space of sheaves on the blow-up, and then show that those sheaves descend to $X$.  Yet another approach would have been to take the moduli space of rank-1 pure sheaves $L$ such that $L \otimes \cI_\Sigma$ is stable, but this is technically more difficult. \bigskip

So we construct $M$.  Fix a section $\Sigma \cong \bP^1 \subset X$ of $\pi_2$, and let $\tilde X$ be the blow-up of $X$ along $\Sigma$ as shown:
\[ \xymatrix@C=2pt{
&&& \tilde X \ar[d]_-\varpi \ar@{}[r]|{\supset} & E \ar[d] \\
\bP^5 &&& X \ar[lll]_-{\pi_1} \ar[d]_-{\pi_2} \ar@{}[r]|{\supset} & \Sigma \\
&&& \bP^1 \ar[ru]_-\sigma.
}\]
\begin{lemma} \label{lem:rel_ample}
Let $E \subset \tilde X$ be the exceptional divisor of the blow-up $\varpi$, and let $H \in \Pic(X)$ be the pullback of the hyperplane class from $\bP^5$, which is relatively ample for $\pi_2$.  Then $\varpi^* 2H - E$ is relatively ample for $\pi_2 \circ \varpi$.
\end{lemma}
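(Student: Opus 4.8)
The plan is to check relative ampleness fiberwise. Since $\pi_2\circ\varpi\colon \tilde X \to \bP^1$ is proper and $\bP^1$ is Noetherian, a line bundle on $\tilde X$ is relatively ample if and only if its restriction to every fiber of $\pi_2\circ\varpi$ is ample, by the standard fiberwise criterion for relative ampleness [EGA IV]: the locus of points with ample restriction is open, so it is enough to see it is all of $\bP^1$. So it suffices to show that $D := \varpi^*2H - E$ restricts to an ample line bundle on each fiber $\tilde A_t$ of $\pi_2\circ\varpi$. Because $\Sigma$ is a section, it meets the fiber $A_t = \pi_2^{-1}(t)$ in a single point $p_t$, and $\tilde A_t$ is the blow-up of the surface $A_t$ at $p_t$, with exceptional curve $e_t = E \cap \tilde A_t$; here $e_t \cong \bP^1$ is the fiber over $t$ of the $\bP^1$-bundle $E = \bP(N_{\Sigma/X}) \to \Sigma$. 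On $\tilde A_t$ we have $D = \varpi^*(2H|_{A_t}) - e_t$, and I would run the Nakai--Moishezon criterion.

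For a smooth fiber $A_t$ (an Abelian surface), write $h = H|_{A_t}$, which is very ample of degree $h^2 = 12$ coming from the $(1,6)$-polarized embedding $A_t \hookrightarrow \bP^5$. Then $D^2 = 4h^2 + e_t^2 = 48 - 1 = 47 > 0$ and $D \cdot e_t = -e_t^2 = 1 > 0$, using $\varpi^* h \cdot e_t = h\cdot \varpi_* e_t = 0$ and $e_t^2 = -1$. For any other irreducible curve $C \subset \tilde A_t$, let $\bar C = \varpi_* C$ and $m = \mathrm{mult}_{p_t}\bar C = E\cdot C$; then
\[ D \cdot C = 2\,h\cdot\bar C - m \;\ge\; 2\,h\cdot\bar C - \deg_{\bP^5}\bar C \;=\; h\cdot\bar C \;\ge\; 1 > 0, \]
where the middle step is the elementary bound $\mathrm{mult}_{p_t}\bar C \le \deg\bar C$ (cut $\bar C$ by a general hyperplane through $p_t$) and $\deg_{\bP^5}\bar C = h\cdot\bar C$. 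Thus $D|_{\tilde A_t}$ is ample. I emphasize that the factor $2$ is exactly what makes this bound strictly positive: with $H$ in place of $2H$ one would only get $D\cdot C \ge 0$, with equality for lines.

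The main obstacle is the reducible fibers of $\pi_2$, where lines really do occur --- the components are expected to be sextic elliptic ruled surfaces, whose rulings are lines in $\bP^5$ --- so the factor $2$ is genuinely needed. To handle these I would use that ampleness may be tested on the (reduced) irreducible components of $\tilde A_t$ separately, and on each component repeat the computation: the identities $\varpi^* h\cdot e_t = 0$ and $\cO_{\tilde X}(E)\cdot e_t = -1$ giving $D\cdot e_t = 1$ are insensitive to the singularities of $A_t$ and to reducibility, while $D\cdot C \ge h\cdot\bar C \ge 1$ holds verbatim on each component since $H$ restricts there to a very ample divisor and $\mathrm{mult}\le\deg$ is a statement in $\bP^5$; the self-intersection $D^2 = 4\deg + e_t^2$ stays positive because each component has positive degree. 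Two points require a short verification on our explicit example and are where I expect the real care to lie: that $\Sigma$ meets every fiber at a \emph{smooth} point of that fiber, so each $\tilde A_t$ is an honest point blow-up with $e_t^2 = -1$, and that the reducible fibers are reduced; both should follow from the description of $\Sigma$ as $\{z\}\times\bP^1$ for a suitable $z \in Z'$. Granting these, Nakai--Moishezon on each component gives ampleness of $D|_{\tilde A_t}$ for every $t$, hence relative ampleness of $\varpi^*2H - E$.
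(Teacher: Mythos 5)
Your strategy --- reduce to ampleness on each fiber and run Nakai--Moishezon there --- is a legitimately different route from the paper's, and your computation on the smooth fibers is correct: $h^2=12$ for the $(1,6)$-polarization, $D^2=47$ (reassuringly, $47$ is exactly the leading coefficient appearing in the Hilbert-polynomial computation later in \S\ref{sec:constr_of_M}), $D\cdot e_t=1$, and the bound $D\cdot C\ge 2\,h\cdot\bar C-\operatorname{mult}_{p_t}\bar C\ge h\cdot\bar C>0$ via $\operatorname{mult}\le\deg$ is exactly the right use of the factor $2$. But for the singular and reducible fibers your argument is not complete as written, and the items you defer are not all short. Reducedness is actually a non-issue (ampleness on a proper scheme can always be tested on the integral subvarieties of its reduction), but the other points are real: you need $\Sigma$ to meet each singular fiber at a smooth point of that fiber, and you need the fiber of $\mathrm{Bl}_\Sigma X$ over $t$ to coincide with $\mathrm{Bl}_{p_t}A_t$ --- blowing up does not commute with passing to fibers when the center meets the fiber at a singular point --- before $e_t^2=-1$ inside the fiber surface and the identity $E\cdot C=\operatorname{mult}_{p_t}\bar C$ make sense. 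Since the reducible fibers are precisely the part of the geometry the paper deliberately avoids analyzing in detail, these verifications would carry most of the weight of the proof.

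The paper's argument makes all of this unnecessary. Writing $\varpi^*2H-E=\varpi^*H+(\varpi^*H-E)$, each fiber of $\pi_2\circ\varpi$ sits inside the blow-up of $\bP^5$ at the point $p_t$, which embeds in $\bP^5\times\bP^4$ (the graph of projection from $p_t$) with $\cO_{\bP^5\times\bP^4}(1,1)$ restricting to $\varpi^*2H-E$. Since the restriction of an ample line bundle to an \emph{arbitrary} closed subscheme is ample, no hypotheses whatsoever on the fibers --- smoothness, irreducibility, reducedness, or the position of $\Sigma$ --- are needed. Your approach buys an explicit, elementary intersection-theoretic picture on the good fibers (and correctly isolates why the coefficient $2$ is sharp), but at the cost of a case analysis on the bad fibers that the paper's one-line embedding argument sidesteps entirely. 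If you want to salvage your route without that case analysis, the cleanest fix is essentially to rediscover the paper's: observe that $\varpi^*H-E$ restricted to any fiber is the pullback of $\cO_{\bP^4}(1)$ under projection from $p_t$, hence nef there, and then add the relatively ample-enough class $\varpi^*H$ --- at which point you are testing the restriction of $\cO(1,1)$ from $\bP^5\times\bP^4$ anyway.
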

\begin{proof}
The fibers of $\pi_2$ are embedded in $\bP^5$, so the fibers of $\pi_2 \circ \varpi$ are embedded in the blow-up of $\bP^5$ at a point.  The latter is naturally embedded in $\bP^5 \times \bP^4$, and the restriction of $\cO_{\bP^5 \times \bP^4}(1,1)$ turns out to be $\varpi^* 2H - E$.
\end{proof}

\begin{proposition}
Let $X^\circ \to V$ and $M^\circ \to V$ be as above.  Embed $M^\circ$ into the moduli space of $(\varpi^* 2H - E)$-semistable sheaves on the fibers of $\pi_2 \circ \varpi$ by sending a line bundle $L$ on a fiber of $\pi_2$ to $\varpi^* L$, and let $M$ be the closure of $M^\circ$ in this embedding.  Then
\begin{enumerate}
\addtolength \itemsep \smallskipamount
\item $M$ is flat over $\bP^1$,
\item $M$ parametrizes only stable sheaves,
\item there is a universal sheaf $\tilde P$ on $\tilde X \times_{\bP^1} M$, and
\item $\tilde P$ descends a sheaf $P$ on $X \times_{\bP^1} M$.
\end{enumerate}
\end{proposition}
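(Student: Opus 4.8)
The plan is to prove the four statements roughly in the stated order, building up from the geometry of the moduli problem to the descent at the end.

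For part (a), flatness of $M$ over $\bP^1$: since $M$ is constructed as the closure of $M^\circ$ inside a relative moduli space of semistable sheaves on the fibers of $\pi_2 \circ \varpi$, it comes with a projective morphism to $\bP^1$. The scheme $M$ is integral (being the closure of the irreducible $M^\circ$), and $\bP^1$ is a smooth curve, so flatness follows from the same criterion used in Proposition \ref{prop:ab_fib}, namely \cite[III Prop.~9.7]{hartshorne}: a dominant morphism from an integral scheme to a smooth curve is automatically flat. I would check that $M \to \bP^1$ is indeed dominant, which holds because $M^\circ \to V$ is surjective.

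For part (b), stability: the key point is that over the open locus $V$, the sheaves $\varpi^*L$ for $L \in \sPic^0$ are stable with respect to $\varpi^* 2H - E$, since they are line bundles (hence stable) pulled back by a birational morphism that is an isomorphism away from $E$, and the destabilizing test involves the relatively ample polarization of Lemma \ref{lem:rel_ample}. The subtle part is the boundary: I must show that no strictly semistable (properly semistable) sheaves appear in the closure $M$. This is exactly the issue the authors raised about reducible fibers, and the whole point of blowing up $\Sigma$ and using the polarization $\varpi^* 2H - E$ is to separate the two components of a reducible fiber enough that the limiting sheaves remain stable. I expect the argument here to analyze the reducible fibers explicitly — computing the Hilbert polynomials of the restrictions of a rank-$1$ sheaf to each component of a reducible fiber of $\pi_2 \circ \varpi$, and verifying the slope inequality is strict. \textbf{This is the main obstacle}, since it requires understanding the geometry of the degenerate fibers precisely, and is the reason the construction was routed through the blow-up rather than done directly on $X$.

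Once stability is established, part (c) — existence of a universal sheaf $\tilde P$ on $\tilde X \times_{\bP^1} M$ — follows from the standard theory: a fine moduli space of stable sheaves carries a universal family provided the gcd of the Hilbert polynomial coefficients with the ranks is $1$, or equivalently provided there is a section to kill the Brauer obstruction. Here the section $\Sigma$ (and its proper transform, or the exceptional divisor $E$) plays exactly the role of trivializing the $\mu$-gerbe obstruction, as in the analogous $(1,8)$ situation of Schnell \cite{schnell}. Finally, for part (d), descent of $\tilde P$ to a sheaf $P$ on $X \times_{\bP^1} M$: the morphism $\varpi \times \mathrm{id}\colon \tilde X \times_{\bP^1} M \to X \times_{\bP^1} M$ contracts $E \times_{\bP^1} M$, so I would verify that $\tilde P$ is trivial along the fibers of this contraction — that is, $\R\varpi_* \tilde P$ recovers $\tilde P$ and the higher direct images vanish — using the construction $\tilde P = \varpi^*(-) $ fiberwise on $M^\circ$ and a limiting/flatness argument to extend over all of $M$. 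Concretely the sheaves parametrized by $M$ are pullbacks $\varpi^* L$ on the open part, so they manifestly descend there, and one checks that the descent datum extends across the boundary because $\tilde P$ restricted to each $E$-fiber is a constant (trivial) bundle; then $P := (\varpi \times \mathrm{id})_* \tilde P$ is the desired sheaf, and flatness over $M$ is preserved since the contraction has connected fibers on which $\tilde P$ is cohomologically trivial.
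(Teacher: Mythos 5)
Your overall architecture matches the paper's, but there is a genuine gap at the heart of part (b). You propose to rule out strictly semistable limits by explicitly analyzing the reducible fibers of $\pi_2\circ\varpi$ and checking strict slope inequalities component by component, and you correctly flag this as the main obstacle --- but that route is precisely the one the authors say they could not make work on $X$ itself (``we are unable to avoid properly semistable sheaves, even by varying the polarization''), and the paper never analyzes the degenerate fibers at all. The actual mechanism is purely numerical: the Hilbert polynomial of $\cO_{\tilde A}$ with respect to $\varpi^* 2H - E$ is computed to be $47\tbinom{t+1}{2} - 24t$, and since $\gcd(47,24)=1$ no sheaf with this Hilbert polynomial can be strictly semistable, so (b) holds with no geometric input; the same coprimality produces the universal sheaf in (c) via \cite[Cor.~4.6.6]{huybrechts_lehn}. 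You invoke exactly this gcd criterion for (c) but do not notice that it also disposes of (b) --- indeed the entire point of blowing up $\Sigma$ and using the polarization of Lemma \ref{lem:rel_ample} is to perturb the Hilbert polynomial into one with coprime coefficients, not to ``separate the components of a reducible fiber.''

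There is a second, smaller gap in (d). You assert that the limit sheaves restrict to trivial bundles on the exceptional lines because the sheaves parametrized by $M$ are pullbacks on the open part, but a flat limit of line bundles is a priori only a rank-one torsion-free sheaf, and if its non-locally-free locus met the exceptional line the counit $\varpi^*\varpi_* L \to L$ could fail to be an isomorphism. The paper supplies the missing input: by a lemma of Altman and Kleiman \cite{altman-kleiman2}, local freeness is a \emph{closed} condition for families of rank-one torsion-free sheaves on the fibers of a smooth morphism with integral geometric fibers, applied after shrinking $X$ to an open neighborhood $W$ of $\Sigma$ on which $\pi_2$ is smooth with integral fibers; degree zero on the exceptional line then persists in the limit. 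Without this (or an equivalent argument) your ``limiting/flatness argument'' does not close. Your treatment of (a) is fine, differing from the paper only in citing \cite[III Prop.~9.7]{hartshorne} (integrality plus dominance) rather than \cite[III Prop.~9.8]{hartshorne} (flat closure of a flat family over a curve); both work.
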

\begin{proof}
(a) Flatness follows from \cite[III Prop.~9.8]{hartshorne}, because $M^\circ$ was flat over $V \subset \bP^1$ and $M$ was obtained by taking the closure in a projective scheme over $\bP^1$. \smallskip

(b) (c)  Let $A \subset \bP^5$ be a smooth fiber of $\pi_2$, and let $\tilde A$ be its blow-up at the point $A \cap \Sigma$.  From the resolution \eqref{eq:res} we find that the Hilbert polynomial of $\cO_A$ with respect to $H$ is $6t^2$.  Thus the Hilbert polynomial of $\cO_{\tilde A}$ with respect to $\varpi^* 2H - E$ is
\begin{align*}
\chi(\varpi^* \cO_A(2tH) \otimes \cO_{\tilde A}(-tE))
&= \chi(\cO_A(2tH) \otimes \varpi_* \cO_{\tilde A}(-tE)) \\
&= \chi(\cO_A(2tH) \otimes \cI_{\text{pt}/A}^t).
\end{align*}
In K-theory, we have $\cI_{\text{pt}/A}^t = \cO_A - \tbinom{t+1}{2} \cO_{\text{pt}}$, so this Hilbert polynomial equals
\[ 6(2t)^2 - \tbinom{t+1}{2} = 47\tbinom{t+1}{2}  - 24t. \]
Because 47 and 24 are coprime, all semistable sheaves with this Hilbert polynomial are stable, and there is a universal sheaf $\tilde P$ on $\tilde X \times_{\bP^1} M$ by \cite[Cor.~4.6.6]{huybrechts_lehn}. \smallskip

(d) We set $P = (\varpi \times 1)_* \tilde P$, and we argue that the counit of the adjunction
\[ (\varpi \times 1)^* (\varpi \times 1)_* \tilde P \to \tilde P \]
is an isomorphism.  Because $\tilde P$ is flat over $M$, it is enough to check that the restriction of this map to the fiber of $\tilde X \times_{\bP^1} M$ over any closed point of $M$ is an isomorphism.
% First take the cokernel of the map.  The restriction of the cokernel is the cokernel of the restriction, and if the restriction is an iso then its cokernel is zero.  Now because P is flat, the kernel of the restriction is the restriction of the kernel, so the kernel vanishes as well.
Suppose that we are given a closed point of $M$ parametrizing a sheaf $L$ on a fiber $\tilde A \subset \tilde X$ of $\pi_2 \circ \varpi$.  Because $\tilde X$ is flat over $\bP^1$, the restriction of the counit above is just the counit
\[ \varpi^* \varpi_* L \to L, \]
and we want to argue that this is isomorphism.  Because $\tilde A$ is the blow-up of a fiber $A \subset X$ of $\pi_2$ at a smooth point, it is enough to show that $L$ is locally free in a neighborhood of the exceptional line in $\tilde A$, and that its restriction to that exceptional line has degree zero.

For this last claim we appeal to Altman and Kleiman \cite{altman-kleiman2}, the lemma in Step XII of the proof of Theorem 3.2, which states that for a family of rank-1 torsion-free sheaves on the fibers of a smooth, finitely presentable morphism with integral geometric fibers, being locally free is a closed condition.  (It is also an open condition, but that is more elementary.)  Our map $\pi_2\colon X \to \bP^1$ has finitely many singular fibers, of which some are reducible, but we can get an open set $W \subset X$ that contains the section $\Sigma$ by deleting the points at which $\pi_2$ is not smooth, and for the reducible fibers, deleting the components that do not meet $\Sigma$.  Then we apply the lemma to the restriction of $\tilde P$ to $\varpi^{-1}(W) \times_{\bP^1} M$, and recall that $M$ was defined as the closure of a space parametrizing line bundles.  Moreover, because those line bundles were pulled back from $X$, their restrictions to the exceptional $\bP^1$s had degree zero, so this remains true in the limit.
\end{proof}

We will find that the following theorem applies to our $X$, $M$, and $P$.

\begin{theorem}[Bridgeland and Maciocia] \label{bm_thm}
Let $X$ be a smooth projective variety of dimension $n$ over an algebraically closed field $k$ of arbitrary characteristic.  Let $M$ be an irreducible projective scheme of the same dimension $n$ over $k$.  Let $P$ be a sheaf on $X \times M$, flat over $M$.

Suppose that for each closed point $y \in M$, the sheaf $P_y$ on $X$ satisfies $\Hom_X(P_y,P_y) = k$, the Kodaira--Spencer map $T_y Y \to \Ext^1_X(P_y,P_y)$ is injective, and $P_y \otimes \omega_X \cong P_y$.  Suppose that for distinct closed points $y_1, y_2 \in M$ we have $\Hom_X(P_{y_1},P_{y_2}) = 0$, and that the closed subscheme
\[ \Gamma = \{ (y_1,y_2) \in M \times M : \Ext^i_X(P_{y_1}, P_{y_2}) \ne 0 \text{ for some } i \in \bZ \} \]
has dimension at most $n+1$.

Then $M$ is smooth, and the functor $D^b(X) \to D^b(M)$ induced by $P$ is an equivalence.
\end{theorem}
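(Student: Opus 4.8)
\emph{Strategy.} The plan is to deduce this from the general formalism of Fourier--Mukai transforms, and everything I use (Grothendieck--Serre duality, the adjunctions between Fourier--Mukai functors, spanning-class arguments) is insensitive to the characteristic, so the ``arbitrary characteristic'' hypothesis causes no trouble. Write $a\colon X \times M \to X$ and $b\colon X \times M \to M$ for the projections, and consider, alongside the transform $\Phi$ of the statement, the transform $\Psi\colon D^b(M) \to D^b(X)$, $\Psi(-) = \R a_*(P \otimes^{\mathbf L} b^*(-))$, with the same kernel but the opposite direction. Since $P$ is flat over $M$, base change gives $\Psi(k(y)) \cong P_y$ for every closed point $y \in M$, so the hypotheses are exactly statements about the point-objects of $\Psi$. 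It is therefore cleanest to prove that $\Psi$ is an equivalence, working with the direction whose source point-objects are the $P_y$; the inverse is then an exact equivalence $D^b(X) \cong D^b(M)$, which is the assertion of the theorem. I would break the argument into: (1) $\Psi$ is fully faithful; (2) $\Psi$ is essentially surjective; (3) $M$ is smooth.

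\emph{Full faithfulness, the easy degrees.} For step (1) I would apply a Bridgeland-type criterion: because the skyscrapers $k(y)$ ought to form a spanning class of $D^b(M)$, full faithfulness should reduce to showing that the comparison map $\Ext^i_M(k(y_1),k(y_2)) \to \Ext^i_X(P_{y_1},P_{y_2})$ is an isomorphism for all $y_1,y_2$ and all $i$. The hypothesis $\Hom_X(P_y,P_y)=k$ gives this in degree $0$ on the diagonal, and $\Hom_X(P_{y_1},P_{y_2})=0$ for $y_1 \neq y_2$ gives it off the diagonal in degree $0$. Applying Serre duality on the smooth variety $X$ together with $P_y \otimes \omega_X \cong P_y$ converts these into the corresponding statements in the top degree $i=n$: indeed $\Ext^n_X(P_{y_1},P_{y_2}) \cong \Hom_X(P_{y_2},P_{y_1})^\vee$. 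Finally, since $\Ext^1_M(k(y),k(y)) = T_y M$ for any scheme, the hypothesis that the Kodaira--Spencer map $T_y M \hookrightarrow \Ext^1_X(P_y,P_y)$ is injective supplies the injectivity of the comparison in degree $1$ on the diagonal.

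\emph{The main obstacle.} What is \emph{not} controlled individually by the hypotheses are the intermediate groups $\Ext^i_X(P_{y_1},P_{y_2})$ for $0 < i < n$ and all the off-diagonal higher Exts, and it is here that the bound $\dim \Gamma \le n+1$ must do the work. The composite $\Psi^{\mathrm L}\Psi$ is itself a Fourier--Mukai endofunctor of $D^b(M)$, and the cohomology of its kernel on $M \times M$ is supported on $\Gamma$. Since $\Gamma$ contains the diagonal $\Delta$ (as $\Ext^0_X(P_y,P_y)\neq 0$) and $\dim \Delta = \dim M = n$, the hypothesis says $\Gamma$ exceeds $\Delta$ by at most one dimension, which is precisely the room needed for the first-order deformation directions recorded by Kodaira--Spencer. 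The counit $\Psi^{\mathrm L}\Psi \to \mathrm{id}$ is an isomorphism generically along $\Delta$ by the degree-$0$ computation, and one wants to show its cone vanishes; the dimension bound on the support of that cone, together with the injectivity of Kodaira--Spencer and the simplicity of the $P_y$, should leave no room for a nonzero cone. Making this precise is the delicate heart of the proof, and I expect it to be the hardest step: $M$ is only assumed irreducible projective, so the skyscrapers need not span $D^b(M)$ on both sides, and one must argue around the singularities of $M$ rather than assume them away.

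\emph{Equivalence and smoothness.} Granting full faithfulness, the image $\mathcal A = \Psi(D^b(M))$ is an admissible subcategory of $D^b(X)$. The hypothesis $P_y \otimes \omega_X \cong P_y$ gives $S_X(\Psi k(y)) = P_y \otimes \omega_X[n] \cong \Psi(k(y)[n])$, so $\mathcal A$ is preserved by the Serre functor $S_X = (-)\otimes \omega_X[n]$; an admissible subcategory stable under the Serre functor is either $0$ or all of $D^b(X)$ when $X$ is connected, and since $\mathcal A \neq 0$ we conclude that $\Psi$ is essentially surjective, hence an equivalence. Finally, smoothness is a derived invariant among projective varieties, so $D^b(M) \cong D^b(X)$ with $X$ smooth forces $M$ to be smooth, which completes the argument and yields the stated equivalence $\Phi\colon D^b(X) \cong D^b(M)$.
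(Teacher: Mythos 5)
Your outline correctly identifies the overall shape of the argument (a Fourier--Mukai functor whose point-objects are the $P_y$, orthogonality of point-objects, the Serre-functor trick for essential surjectivity), but it has a genuine gap at exactly the step you flag as ``the delicate heart,'' and the way you propose to close the loop is circular. The spanning-class/comparison-map criterion you invoke for full faithfulness requires the comparison $\Ext^i_M(k(y),k(y)) \to \Ext^i_X(P_y,P_y)$ to be bijective in \emph{all} degrees; if $M$ is singular at $y$ then $\Ext^i_M(k(y),k(y))$ is nonzero for every $i \ge 0$ while $\Ext^i_X(P_y,P_y)$ vanishes for $i > n$, so the criterion cannot possibly apply until smoothness of $M$ is known. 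Likewise, the adjoints and admissibility you need for the essential-surjectivity step require the kernel to behave well over a possibly singular $M$. You cannot therefore first prove the equivalence and then deduce smoothness of $M$ from derived invariance of regularity: smoothness must be established \emph{first} (or concurrently), and that is precisely what the dimension bound $\dim\Gamma \le n+1$ is for. In Bridgeland and Maciocia's actual argument this is done by an intersection-theoretic analysis of the kernel of the composite functor on $M \times M$, feeding into a regularity criterion for local rings (their Theorem 4.3, whose proof is corrected by Bridgeland--Iyengar): a point of a projective scheme supporting an object of bounded Ext-amplitude and finite homological dimension is a regular point. Your proposal contains no substitute for this step.

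For comparison, the paper does not reprove the theorem at all: it cites Bridgeland--Maciocia's Theorem 6.1 and records only the two modifications needed, namely (i) that the argument works over any algebraically closed field because the crucial regularity criterion is characteristic-free (via Bridgeland--Iyengar), and (ii) that injectivity of the Kodaira--Spencer map suffices in place of bijectivity, as one sees by inspecting the last paragraph of their proof. So the content you would need to supply is exactly the content of the cited theorem, and the part you leave as an expectation is the part that cannot be waved away.
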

\begin{proof}
This is a slight generalization of \cite[Thm.~6.1]{bridgeland-maciocia}, and the proof is essentially the same; we only remark on the changes needed.  First, while Bridgeland and Maciocia state all their results over $\bC$, they do not use any special properties of working in characteristic zero such as generic smoothness, and the crucial \cite[Thm.~4.3]{bridgeland-maciocia} is valid over an arbitrary field, as Bridgeland and Iyengar show in \cite{bridgeland-iyengar} which corrects the proof of \cite[Thm.~4.3]{bridgeland-maciocia}.  On the other hand, they reason extensively with closed points, so we retain the hypothesis that $k$ is algebraically closed.

The other difference between Bridgeland and Maciocia's statement and ours is that they require the Kodaira--Spencer map to be an isomorphism, whereas we only require it to be injective.  But inspecting the last paragraph of their proof, we see that injectivity is enough.  And in fact Bridgeland and Maciocia use this in their own application \cite[Thm.~1.2]{bridgeland-maciocia}, because they ask for an irreducible component, rather than a connected component, of the moduli space of sheaves.  The latest arXiv version of their paper includes a new Footnote 2 saying the same thing.
\end{proof}

\begin{proposition}
Theorem \ref{bm_thm} applies to the $X$, $M$, and $P$ constructed in this section.  Thus $M$ is smooth, and $D^b(M) \cong D^b(X)$.
\end{proposition}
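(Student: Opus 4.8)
The plan is to check the hypotheses of Theorem~\ref{bm_thm} with $n=3$, using the descent relation $\tilde P = (\varpi \times 1)^* P$ to transport questions about $P$ on $X$ to the sheaves $\tilde P_y$ on $\tilde X$, where stability is already known. The structural hypotheses are immediate: $X$ is smooth projective of dimension $3$ by Proposition~\ref{sec2_prop}(b); $M$ is projective and irreducible of dimension $3$, since it is the closure of $M^\circ = \sPic^0(X^\circ/V)$, a smooth group scheme over the irreducible base $V$ with connected fibers and a section, hence an irreducible total space of dimension $3$; and $P$ is flat over $M$ by the previous proposition. Because $X$ is Calabi--Yau we have $\omega_X \cong \cO_X$, so the condition $P_y \otimes \omega_X \cong P_y$ holds for free.

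First I would record the identification $\Ext^k_X(P_{y_1}, P_{y_2}) \cong \Ext^k_{\tilde X}(\tilde P_{y_1}, \tilde P_{y_2})$, which follows from $\tilde P_y = \varpi^* P_y$, the projection formula, and $\R\varpi_* \cO_{\tilde X} = \cO_X$ (as $\varpi$ is the blow-up of the smooth curve $\Sigma$ in the smooth variety $X$). Granting this, the $\Hom$ conditions reduce to stability of the $\tilde P_y$: stable sheaves are simple, giving $\Hom_X(P_y, P_y) = k$; and for distinct closed points $y_1, y_2$ lying over the same point of $\bP^1$, the $\tilde P_{y_1}, \tilde P_{y_2}$ are non-isomorphic stable sheaves with the same reduced Hilbert polynomial, so $\Hom = 0$. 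If instead $y_1, y_2$ lie over different points of $\bP^1$, the supports are disjoint and every $\Ext$ group vanishes; in particular $\Gamma \subseteq M \times_{\bP^1} M$.

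For the Kodaira--Spencer hypothesis I would use that $M$ sits as a closed subscheme of the relative moduli space $\mathcal M$ of $(\varpi^* 2H - E)$-stable sheaves on the fibers of $\pi_2 \circ \varpi$, which is fine by the coprimality computed in the previous proposition. The universal sheaf identifies $T_y \mathcal M \cong \Ext^1_{\tilde X}(\tilde P_y, \tilde P_y)$ with the Kodaira--Spencer map an isomorphism, and the subscheme inclusion gives an injection $T_y M \hookrightarrow T_y \mathcal M$. Composing with the identification above yields injectivity of the Kodaira--Spencer map $T_y M \to \Ext^1_X(P_y, P_y)$; matching the two maps under descent is a routine compatibility check. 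This provides only injectivity, which is exactly what the modified form of Theorem~\ref{bm_thm} requires; we cannot expect more, since the smoothness of $M$ is part of the conclusion.

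The main obstacle is the bound $\dim \Gamma \le n+1 = 4$, and this is where the reducible fibers make themselves felt. Over the open locus $V$ where the fiber $A$ is a smooth abelian surface, $P_{y_1}$ and $P_{y_2}$ are degree-zero line bundles $L_1, L_2$ on $A$; since $A$ is a divisor in $X$ with trivial normal bundle, a Koszul computation gives $\Ext^k_X(P_{y_1}, P_{y_2}) = H^k(A, L_1^{-1} L_2) \oplus H^{k-1}(A, L_1^{-1} L_2)$. A nontrivial degree-zero line bundle on an abelian surface has no cohomology in any degree, so over $V$ the locus $\Gamma$ collapses to the diagonal, of dimension $3$. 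Over each of the finitely many points $t \in \bP^1$ with singular fiber, the contribution to $\Gamma$ lies in $M_t \times M_t$, of dimension $4$ because $M$ is flat of relative dimension $2$ over $\bP^1$; the union over these finitely many $t$ is still $4$-dimensional. Hence $\dim \Gamma \le 4$, with equality forced precisely by the reducible fibers, and Theorem~\ref{bm_thm} concludes that $M$ is smooth and that $P$ induces an equivalence $D^b(X) \cong D^b(M)$.
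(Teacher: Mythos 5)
Your proposal is correct and follows essentially the same route as the paper: transporting Hom/Ext computations to $\tilde X$ via $\varpi^*$ and $\R\varpi_*\cO_{\tilde X}=\cO_X$, using stability on $\tilde X$ for the Hom and Kodaira--Spencer conditions, and bounding $\Gamma$ by the diagonal over $V$ plus the $4$-dimensional products $N_i\times N_i$ over the finitely many singular fibers. The only difference is that you make explicit the Koszul computation showing $\Ext^*_X(P_{y_1},P_{y_2})=0$ for distinct degree-zero line bundles on a smooth fiber, which the paper states more briefly.
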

\begin{proof}
This is entirely similar to \cite[\S7.3]{bridgeland-maciocia}, but for the reader's convenience we go through the details.

Let $y_1, y_2 \in M$.  We have
\[ \Hom_X(P_{y_1}, P_{y_2}) = \Hom_{\tilde X}(\varpi^* P_{y_1}, \varpi^* P_{y_2}) \]
because $\varpi_* \cO_{\tilde X} = \cO_X$.  Because the $\varpi^* P_{y_i}$ are stable sheaves, this Hom space is 1-dimensional if $y_1 = y_2$ and zero if $y_1 \ne y_2$.  Similarly, the Kodaira-Spencer map is injective because $\varpi^*$ is fully faithful and $M$ is a subvariety of the moduli space of stable sheaves on $\tilde X$.  We have $P_{y_i} \otimes \omega_X \cong P_{y_i}$ because $\omega_X \cong \cO_X$.

It remains to show that $\Gamma$ is at most 4-dimensional.  If $y_1, y_2 \in M$ map to different points of $\bP^1$, then $P_{y_1}$ and $P_{y_2}$ are supported on different fibers of $\pi_2\colon X \to \bP^1$, so there are no Exts between them.  If $y_1$ and $y_2$ are distinct but map to the same point of $V \subset \bP^1$, then $P_{y_1}$ and $P_{y_2}$ are different line bundles of degree 0 on a smooth Abelian surface, so again there are no Exts between them.  Thus we see that
\[ \Gamma \subset (N_1 \times N_1) \cup \dotsb \cup (N_k \times N_k) \cup \Delta, \]
where $N_1, \dotsc, N_k \subset M$ are the fibers over the finite set $\bP^1 \setminus V$.  These fibers are 2-dimensional because $M$ is flat over $\bP^1$, and $\Delta$ is 3-dimensional, so $\Gamma$ is at most 4-dimensional, as desired.
\end{proof}

%%%%%%%%%%%%%%%%%%%%%%%%%%%%%%%%%%%%%%%%%%%%%%%%%%%%%% 

\section{The Picard scheme of \texorpdfstring{$M$}{M}}

Now we have smooth Calabi--Yau threefolds $X$ and $M$, defined over either $\bC$ or $\bar\bF_3$, and dual Abelian surface fibrations $X \to \bP^1$ and $M \to \bP^1$.  In this section we compute $\sPic^\tau_M$, the ``torsion component of the identity'' of the Picard scheme of $M$ which parametrizes numerically trivial line bundles.  In the next section we will use it to find the Hodge numbers of $M$.

\begin{proposition}
Over $\bC$, the topological fundamental group $\piOneTop(X) = 0$.  Over $\bar\bF_3$, the \'etale fundamental group $\piOneEt(X) = 0$.
\end{proposition}
\begin{proof}
Over $\bC$ we follow Gross and Pavanelli \cite[Thm.~1.4]{grpa}.  A smooth complete intersection of two cubics in $\bP^5$ is simply connected by the Lefschetz hyperplane theorem.  The degeneration to a nodal complete intersection $Y \subset \bP^5$, followed by the the small resolution $X \to Y$, has the effect of cutting out a $D^3 \times S^3$ for each node, and gluing in an $S^2 \times D^4$, which leaves $\piOneTop$ unchanged by van Kampen's theorem.  Thus $\piOneTop(X) = 0$, and its profinite completion $\piOneEt(X) = 0$ as well.

Over $\bar\bF_3$, we lift $X$ to $W$ as in the proof of Proposition \ref{prop:ab_fib}.  The geometric general fiber has $\piOneEt = 0$, and the specialization map on $\piOneEt$ is surjective by \cite[\href{https://stacks.math.columbia.edu/tag/0C0P}{Tag 0C0P}]{stacks-project}, so the special fiber has $\piOneEt = 0$.
\end{proof}

\begin{proposition} \label{prop:Z6}
$\sPic_X = \bZ^6$.
\end{proposition}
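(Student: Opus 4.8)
The plan is to reduce the entire statement to the vanishing $\H^1(\cO_X) = \H^2(\cO_X) = 0$ recorded in Proposition~\ref{sec2_prop}(c). First I would dispose of the scheme structure. The tangent space to $\sPic_X$ at the origin is $\H^1(\cO_X) = 0$, so $\sPic_X$ is unramified over the ground field, hence \'etale; its identity component $\sPic^0_X$, being connected and \'etale, is therefore trivial. Thus $\sPic_X$ is the constant group scheme associated to the finitely generated abelian group $\Pic(X) = \NS(X)$, and it remains to identify this group with $\bZ^6$.

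Over $\bC$ this is pure Hodge theory. Since $\pi_1(X) = 0$ we have $\H^1(X,\bZ) = 0$, and $\H^2(X,\bZ)$ is torsion-free because its torsion subgroup is $\Ext^1(\H_1(X,\bZ),\bZ) = 0$. From the Hodge diamond \eqref{eq:1661} we read off $b_2 = 6$ and $h^{2,0} = h^{0,2} = 0$, so $\H^2(X,\bC) = \H^{1,1}$ and the Lefschetz $(1,1)$ theorem identifies $\NS(X)$ with all of $\H^2(X,\bZ) \cong \bZ^6$.

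Over $\bar\bF_3$ I would deduce the result from the complex case by lifting, reusing the smooth proper lift $X_W \to \Spec W$ constructed in the proof of Proposition~\ref{prop:ab_fib}. Choose a finite extension $K'/K$ of the fraction field over which every class in the N\'eron--Severi group of the geometric generic fiber is defined; since $\bar\bF_3$ is algebraically closed this extension is totally ramified, so its valuation ring $W'$ again has residue field $\bar\bF_3$, and $X_{W'} := X_W \times_W W'$ is regular and proper over $W'$ with integral special fiber $X$ and smooth generic fiber $X_{K'}$. The vanishing $\H^1(\cO) = \H^2(\cO) = 0$ makes every line bundle on $X$ lift uniquely through the infinitesimal thickenings of the special fiber, so by Grothendieck existence $\Pic(X) \cong \Pic(X_{W'})$; and because $X_{W'}$ is regular with integral special fiber cut out by a uniformizer, the localization sequence for Picard groups gives $\Pic(X_{W'}) \cong \Pic(X_{K'})$. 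Finally $\Pic(X_{K'}) = \NS(X_{K'}) = \bZ^6$ by the characteristic-zero argument of the previous paragraph applied to $X_{K'}$, whose geometric fiber has the same Betti and Hodge numbers, and trivial $\pi_1$, as the complex threefold $X$. Chaining these isomorphisms yields $\Pic(X) \cong \bZ^6$, and combined with $\sPic^0_X = 0$ this gives $\sPic_X = \bZ^6$.

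The main obstacle is the characteristic-3 case: a priori the Picard number could drop upon specialization, and bounding it from below directly (say by crystalline or Tate-conjecture-style arguments) would be delicate. The lifting argument sidesteps this by producing an honest \emph{isomorphism} $\Pic(X) \cong \Pic(X_{K'})$ rather than merely an injection, so the entire computation is pushed into characteristic zero. The two points that require care are the regularity of $X_{W'}$ and the unique deformability of line bundles off the special fiber, both of which rest squarely on the vanishing of $\H^1(\cO_X)$ and $\H^2(\cO_X)$.
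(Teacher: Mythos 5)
Your proof is correct and takes essentially the same route as the paper: over $\bC$ it is the Lefschetz $(1,1)$ theorem plus $\pi_1(X)=0$, and over $\bar\bF_3$ it lifts $X$ to $W$ and uses $h^1(\cO_X)=h^2(\cO_X)=0$ to carry the Picard scheme across the specialization. The only difference is that you unpack by hand (unique infinitesimal deformation of line bundles, Grothendieck existence, localization on the regular total space) what the paper obtains in one step by citing \cite{ehsb} for the flatness of $\sPic$ over $\Spec(W)$.
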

\begin{proof}
Over $\bC$, we know that the Picard group is free Abelian because $\piOneTop(X) = 0$, and we know its rank by the Lefschetz theorem of $(1,1)$ classes and the Hodge numbers given in \eqref{eq:1661}. We conclude that the group scheme $\sPic_X$ is isomorphic to $\bZ^6$.

Over $\bar\bF_3$, we again lift $X$ to $W$, and apply \cite[Prop.~4.2]{ehsb} to deduce that $\sPic$ is flat over $\Spec(W)$; to use this reference, recall that $h^1(\cO_X) = h^2(\cO_X) = 0$ by Proposition \ref{sec2_prop}(c).
\end{proof}

\begin{proposition} \label{prop:pictau} \ 
\begin{enumerate}
\addtolength \itemsep \smallskipamount
\item $\sPic^\tau_M = \bZ/3 \times \mu_3$.
\item Over $\bC$ we have $\piOneTop(M) = (\bZ/3)^2$. Over $\bar\bF_3$ we have $\piOneEt(M) = \bZ/3$.
\end{enumerate}
\end{proposition}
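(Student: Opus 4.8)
The plan is to exhibit $M$, away from its reducible fibers, as a quotient of $X$ by the translation action of $G := \bZ/6 \times \mu_6$, and then to isolate the part of that action which survives globally. Over the open set $V \subseteq \bP^1$ on which $\pi_2$ is smooth, the relative polarization is an isogeny $X^\circ \to M^\circ$ whose kernel is the level-structure group scheme $K(\lambda) \cong \bZ/6 \times \mu_6 = G$ (the translations realized by the Heisenberg action of $\sigma,\tau$); thus $X^\circ \to M^\circ$ is a $G$-torsor and $M^\circ = X^\circ/G$. Writing $G = H \times H'$ with $H = \bZ/3 \times \mu_3$ and $H' = \bZ/2 \times \mu_2$, the entire computation is governed by which elements of $G$ act freely. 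I claim $\sPic^\tau_M = \bZ/3 \times \mu_3$ as a group scheme over $k \in \{\bC, \bar\bF_3\}$, specializing to $(\bZ/3)^2$ over $\bC$ and to $\mu_3$ (identity component) times $\bZ/3$ (component group) over $\bar\bF_3$.

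First I would invoke Proposition \ref{prop:fixed_and_free}, which gives that $H$ acts freely on all of $X$ and, I expect, that the elements of $G$ having a fixed point generate exactly $H'$. Granting this, set $Y := X/H'$. By Armstrong's theorem $Y$ is simply connected over $\bC$, since $H'$ is generated by its fixed-point elements; and because the complementary factor $H$ then acts freely on $Y$, the quotient $Y \to M = Y/H$ is an $H$-torsor. Its characters $\widehat H \cong \mu_3 \times \bZ/3$ produce an inclusion $\bZ/3 \times \mu_3 \hookrightarrow \sPic^\tau_M$, giving the lower bound in (a); the same torsor gives $\pi_1(M) = H$, proving (b): over $\bC$ this is $(\bZ/3)^2$, while over $\bar\bF_3$ the maximal \'etale subcover is the $\bZ/3$-quotient, whose source is simply connected, so $\pi_1^{\textup{\'et}}(M) = \bZ/3$. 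Alternatively (b) follows from (a) by the Cartier duality between the finite group scheme $\sPic^\tau_M$ and the fundamental group scheme of $M$.

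For the reverse inclusion in (a) I would restrict to $M^\circ$ and descend along the generic $G$-torsor: since $\sPic_X = \bZ^6$ by Proposition \ref{prop:Z6}, we have $\sPic^\tau_X = 0$, so every numerically trivial line bundle on $M$ restricts on $M^\circ$ to a character class in $\widehat G = \bZ/6 \times \mu_6$. The hard part --- the genuinely new phenomenon absent from the $(1,8)$ example --- is to rule out the complementary classes in $\widehat{H'} = \bZ/2 \times \mu_2$. The point is that the $H'$-torsor $X^\circ/H \to M^\circ$ does \emph{not} extend to a torsor over $M$: because $H'$ acts with fixed points over the reducible fibers, that cover ramifies there, so the corresponding line bundles, once extended across $M$, meet the reducible fiber components nontrivially and fail to be numerically trivial. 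This forces $\sPic^\tau_M \cap \widehat{H'} = 0$ and hence $\sPic^\tau_M = \bZ/3 \times \mu_3$. I expect the main labor to lie exactly here: transferring the clean quotient description from the naive $X/G$ to the sheaf-theoretic compactification $M$ of Section \ref{sec:constr_of_M}, and verifying through the explicit fixed-point computation that the \'etale $H$-cover extends over all of $\bP^1$ while the $H'$-cover does not.
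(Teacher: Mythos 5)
Your overall skeleton matches the paper's: decompose $G = \bZ/6\times\mu_6$ into the fixed-point part $G' = \bZ/2\times\mu_2$ and the freely acting part $G'' = \bZ/3\times\mu_3$, identify $M^\circ$ with $X^\circ/G$ via the kernel of the polarization isogeny, and extract $\sPic^\tau_M$ and $\pi_1(M)$ from the residual free $G''$-action. But there is a genuine gap exactly where you flag ``the main labor'': you never actually transfer the computation from the quotient of $X$ to $M$ itself. The construction of \S\ref{sec:constr_of_M} only yields that $X/G$ is \emph{birational} to $M$ (the paper explicitly declines to prove they are isomorphic), so an inclusion $\bZ/3\times\mu_3\hookrightarrow\sPic^\tau_{X/G}$ and a computation of $\pi_1(X/G)$ do not by themselves say anything about $\sPic^\tau_M$ or $\pi_1(M)$. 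The paper's missing ingredient is minimal model theory: blow up the six fixed genus-one curves to get $\hat X$, check that $\hat X/G'$ is smooth (Chevalley--Shephard--Todd) with \emph{trivial} canonical bundle (a Riemann--Hurwitz computation on the exceptional divisors), so that $\hat X/G$ is smooth with numerically trivial canonical bundle; then \cite[Cor.~3.54]{kollar-mori} forces the birational map $\hat X/G\dashrightarrow M$ to be an isomorphism in codimension one, which is what lets one equate Picard schemes and fundamental groups. Relatedly, your intermediate quotient $Y = X/H'$ is singular along the images of the fixed curves, so applying Armstrong's theorem to it and then claiming $Y\to M=Y/H$ is a torsor between the relevant smooth models does not go through as stated; the resolution $\hat X$ is needed precisely to make the staged quotient smooth.

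Your argument for the reverse inclusion ($\sPic^\tau_M\cap\widehat{H'}=0$) also rests on an unverified geometric claim --- that the $H'$-cover ramifies over the reducible fibers in a way that destroys numerical triviality of the associated line bundles. The paper avoids this entirely: it shows that each branched double cover $\hat X\to\hat X/\sigma^3\to\hat X/G'$ induces an injection on Picard schemes with $2$-torsion kernel (via $\det f_*\cO$), and that there is no $2$-torsion because $\pi_1(\hat X/\sigma^3)=0$ (surjectivity of $\pi_1$ under these covers coming from Koll\'ar's connectedness result). Hence $\sPic^\tau_{\hat X/G'}=0$, and the free $G''$-quotient then gives $\sPic^\tau_{\hat X/G}$ as exactly the Cartier dual of $G''$ by \cite[Cor.~1.2]{remy} --- both the upper and lower bounds in one stroke. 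Finally, your parenthetical that (b) ``follows from (a) by Cartier duality with the fundamental group scheme'' is not a substitute for an argument: $\pi_1^{\textup{\'et}}$ only sees the \'etale part and could a priori be larger than what abelian covers detect; the paper instead factors the \'etale quotient map explicitly (over $\bar\bF_3$, the $\mu_3$-quotient is a universal homeomorphism and the $\bZ/3$-quotient is \'etale).
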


\noindent Let us outline the proof, which will occupy the rest of the section.  At the beginning of our construction in \S\ref{sec:construction}, the group $G := \bZ/6 \times \mu_6$ acted on $\bP^5$ by powers of $\sigma$ and $\tau$.  We will show that $G$ acts on $X$, though not freely, and that $X/G$ is birational to $M$.  We will take a crepant resolution $\hat X/G$ of $X/G$, giving a smooth threefold whose canonical bundle is numerically trival.  Thus by \cite[Cor.~3.54]{kollar-mori}, the birational map $\hat X/G \dashrightarrow M$ is an isomorphism away from a set of codimension 2 on either side,\footnote{In fact we expect that the birational map is an isomorphism, but to prove this, following Schnell's argument in \cite[Lem.~5.4]{schnell}, would require a detailed analysis of the singular fibers of $X \to \bP^1$ and $M \to \bP^1$.} so the Picard schemes of $\hat X/G$ and $M$ are isomorphic.  We will conclude by computing $\sPic^\tau_{\hat X/G}$.

\begin{lemma}
Let $G = \bZ/6 \times \mu_6$ act on $\bP^5$ by powers of $\sigma$ and $\tau$ as introduced at the beginning of \S\ref{sec:construction}.  Then $G$ acts on the singular threefold $Y \subset \bP^5$ and its small resolution $X \subset \bP^5 \times \bP^1$.
\end{lemma}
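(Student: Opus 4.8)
The plan is to reduce both assertions to the weight structure of the defining cubics under $G$, and then to observe that the rational map realizing $X$ as a graph is $G$-equivariant with $G$ acting trivially on its target. First I would record the action of $G$ on the coordinate ring $k[x_0,\dots,x_5]$: grading $x_j$ by its index modulo $6$, the factor $\mu_6$ (i.e.\ $\tau$) acts on each monomial through its total weight, while $\sigma$ sends $x_j \mapsto x_{j-1}$ (indices mod $6$) and hence lowers the weight of any cubic by $3$. A direct check on the four generators shows that every monomial of each $f_i$ has weight $\equiv 0 \pmod 6$; consequently $\mu_6$ fixes all the $f_i$, each $\sigma f_i$ has weight $3$, and $\sigma^2 f_i = f_i$, so $\sigma$ interchanges $f_i \leftrightarrow \sigma f_i$.

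For the action on $Y = V(f_p, \sigma f_p)$ I would show that the ideal $(f_p, \sigma f_p)$ is $G$-stable. Indeed $f_p = \sum_i p_i f_i$ has weight $0$ and $\sigma f_p = \sum_i p_i\,\sigma f_i$ has weight $3$, so the ideal is homogeneous for the $\mu_6$-grading and therefore $\mu_6$-stable; and $\sigma$ merely swaps the two generators $f_p \leftrightarrow \sigma f_p$. Since $\sigma$ and $\mu_6$ generate $G$, the ideal is $G$-stable, so $G$ acts on $Y$. Over $\bar\bF_3$ the relevant $\mu_6$-action is the group-scheme action defined by the grading, so the non-reducedness of $\mu_6$ causes no difficulty.

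For the action on $X$, recall that $X \subset Y \times \bP^1$ is the closure of the graph of the rational map $Y \dashrightarrow \bP^1$ defined by the two sextics among the $2\times 2$ minors $p_{ij} = f_i\,\sigma f_j - f_j\,\sigma f_i$ that do not vanish on $Y$. Each $p_{ij}$ has weight $3$ under $\mu_6$, and since $\sigma$ swaps $f_i \leftrightarrow \sigma f_i$ one reads off $\sigma^* p_{ij} = -p_{ij}$; thus all six minors share a single character of $G$ (weight $3$ for $\mu_6$ and sign $-1$ for $\sigma$), so the whole span of the minors is one $G$-eigenspace. Any linear subspace or quotient of it — in particular the two-dimensional space of sextics defining the map — then transforms by the same character, so the map is $G$-invariant: $G$ acts trivially on the target $\bP^1$, and the graph together with its closure $X$ is preserved by $g\cdot(y,\ell) = (g(y),\ell)$.

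The point to get right is this last equivariance: the action lifts from $Y$ to the small resolution $X$ precisely because $X$ is realized concretely as a graph and $G$ fixes the base $\bP^1$ pointwise, so no abstract functoriality of resolutions is needed. Everything else is a bookkeeping of weights; the only mild subtlety is confirming that the two sextics surviving on $Y$ still transform by a single character, which is automatic once one knows that the full minor span is a single $G$-eigenspace.
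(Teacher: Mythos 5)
Your proposal is correct and follows essentially the same route as the paper: verify that $\tau$ fixes the $f_i$ and $\sigma$ swaps $f_p \leftrightarrow \sigma f_p$ so that $Y$ is $G$-stable, then observe that the $2\times 2$ minors all transform by a single character of $G$, so the rational map to $\bP^1$ is $G$-invariant and its graph $X$ is preserved. The paper states this more tersely; your explicit weight bookkeeping (all minors picking up the sign $-1$ under both $\sigma$ and $\tau$) just fills in the details the paper leaves implicit.
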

\begin{proof}
The cubic polynomials $f_0, \dotsc, f_3$ were preserved by $\sigma^2$ and $\tau^2$ by construction, and in fact they are preserved by $\tau$ as well.  Thus $f_p$, which is a linear combination of $f_0,\dotsc,f_3$, is preserved by $\sigma^2$ and $\tau$.  And $Y$, which is cut out by $f_p$ and $\sigma f_p$, is preserved by all of $G$.  The rational map $\phi\colon Y \dashrightarrow \bP^1 \subset \Gr(2,4)$ given by the $2 \times 2$ minors of the matrix \eqref{matrix_formerly_known_as_M} is $G$-equivariant, with $G$ acting trivially on the target, so $G$ perserves the graph of $\phi$, which is $X$.
\end{proof}

\begin{proposition}
The quotient $X/G$ is birational to $M$.
\end{proposition}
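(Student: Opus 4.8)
The plan is to construct an isomorphism $X^\circ/G \cong M^\circ$ over the smooth locus $V \subset \bP^1$ and then pass to closures. Since $G$ acts trivially on $\bP^1$, the open set $X^\circ = \pi_2^{-1}(V)$ is $G$-invariant and dense in $X$, so its image $X^\circ/G$ is dense and open in $X/G$; likewise $M^\circ$ is dense and open in $M$. An isomorphism over $V$ therefore exhibits $X/G$ and $M$ as birational.

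The geometric input is the theory of theta groups. On a smooth fiber $A = \pi_2^{-1}(v) \subset \bP^5$, the hyperplane class restricts to a line bundle $L$ defining a polarization of type $(1,6)$, so $\H^0(A,L)$ is $6$-dimensional and the associated isogeny
\[ \phi_L\colon A \to \sPic^0(A), \qquad a \mapsto [t_a^* L \otimes L^{-1}], \]
is surjective with kernel $K(L)$ a finite group scheme of order $36$. The point where the construction of \S\ref{sec:construction} enters is that the linear action of $G = \bZ/6 \times \mu_6$ on $\bP^5$ restricts on $A$ to the translation action of $K(L) \subset A$: this is precisely the statement that the projectivized Schr\"odinger representation of the theta group $\mathcal{G}(L)$ realizes translation by $K(L)$-points as linear automorphisms of $\bP(\H^0(A,L)^\vee) = \bP^5$ (see the footnote in \S\ref{sec:construction}, or \cite[Ch.~6]{birkenhake_lange}). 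Thus $G \to K(L)$ is an isomorphism of group schemes --- note that over $\bar\bF_3$ the factor $\mu_6$ is non-reduced, which is exactly why we must write $\mu_6$ rather than $\bZ/6$ --- and $\phi_L$ identifies the quotient $A/G = A/K(L)$ with $\sPic^0(A)$, the fiber of $M^\circ$ over $v$.

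To globalize, I would observe that $H$ is a relative polarization of the Abelian scheme $X^\circ \to V$, giving a relative polarization isogeny $\phi_H\colon X^\circ \to \sPic^0(X^\circ/V) = M^\circ$ whose kernel $\mathcal{K} \to V$ is finite flat of order $36$. The constant group scheme $G$ maps to $\mathcal{K}$ through its translation action, and since this map is an isomorphism on each geometric fiber it is an isomorphism of finite flat group schemes over $V$. Hence $\phi_H$ exhibits $M^\circ$ as the fppf quotient $X^\circ/G$, which is the desired isomorphism over $V$, and taking closures yields the birational map $X/G \dashrightarrow M$.

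The main obstacle is the middle step: identifying the a priori linear $G$-action on $\bP^5$ with scheme-theoretic translation by $K(L)$. Once this is in hand everything else is formal. The cleanest route is to invoke the theta-group formalism directly. Alternatively, over $\bC$ one can argue concretely that $\sigma$, having order $6$ and acting freely on a general fiber, must be translation by a $6$-torsion point, that the $\mu_6$-factor acts through the Cartier-dual subgroup of $K(L)$, and that the relation recorded in \S\ref{sec:construction} --- that $\sigma$ and $\tau$ commute only up to the scalar $\xi$ --- expresses the nondegeneracy of the commutator (Weil) pairing, forcing the two factors to generate all of $K(L)$; over $\bar\bF_3$ one runs the same argument with $\mu_6$ in its group-scheme incarnation. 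Either way I would take care that these identifications are made compatibly across the family, which is automatic because $H$, and hence $\phi_H$, is defined relatively over $V$.
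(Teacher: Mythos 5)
Your proposal is correct and follows essentially the same route as the paper: restrict to the smooth locus $V$, use the relative polarization $H$ to get the isogeny $\phi_H\colon X^\circ \to M^\circ$, observe that $G$ sits inside its kernel, and conclude by the order count $6^2 = 36$ that $G$ is the whole kernel, so $X^\circ/G \cong M^\circ$ and hence $X/G$ is birational to $M$. The only difference is that you spell out, via the theta-group/Schr\"odinger formalism, why the linear $G$-action restricts to translation by $K(L)$ on each fiber, a point the paper compresses into the phrase that $G$ becomes a subgroup scheme of $X^\circ$ by acting on the section and is contained in the kernel because it preserves $H$.
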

\begin{proof}
As in the last section, let $V \subset \bP^1$ be the open set over which $X$ is smooth, and consider the smooth fibrations $X^\circ \to V$ and $M^\circ \to V$.  We have fixed a section of $X^\circ \to V$, so these are dual Abelian schemes, and $G$ becomes a subgroup scheme of $X^\circ$ by acting on the section.  The polarization $H$ from Lemma \ref{lem:rel_ample} determines an isogeny $X^\circ \to M^\circ$, and because $G$ preserves $H$, we see that $G$ is contained in the kernel of this isogeny.  We have seen that $\chi(\cO_A(H)) = 6$ for a fiber $A \subset X^\circ$, so the kernel has degree $6^2 = 36$ by \cite[\S III.16]{mumford_ab_var}, so $G$ is the whole kernel, and we conclude that $X^\circ/G$ is isomorphic to $M^\circ$.
\end{proof}

\begin{proposition} \label{prop:fixed_and_free}
Write $G = G' \times G''$, where $G' = \bZ_2 \times \mu_2$ is generated by $\sigma^3$ and $\tau^3$, and $G'' = \bZ/3 \times \mu_3$ is generated by $\sigma^2$ and $\tau^2$.  Acting on $X$, each of the three non-trivial element of $G'$ fixes a pair of disjoint, smooth, genus-1 curves, and all six curves are disjoint from one another.  The action of $G''$ on $X/G'$ is free.
\end{proposition}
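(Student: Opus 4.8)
The plan is to treat the two assertions separately. For the fixed loci of the involutions in $G' = \langle\sigma^3,\tau^3\rangle$, I would work upstairs on $\bP^5$ and analyze the three non-trivial elements $\sigma^3$, $\tau^3$, $\sigma^3\tau^3$ one at a time. As linear maps, $\sigma^3$ (the shift by $3$) and $\tau^3$ ($x_j \mapsto (-1)^j x_j$) each square to the identity and, because $\sigma$ and $\tau$ commute only up to a power of $\xi$, one checks that they anticommute; hence $\sigma^3\tau^3$ squares to $-1$ and is an involution only in $\mathrm{PGL}_6$. Each of the three maps therefore has exactly two eigenspaces in $\bP^5$, cutting out a pair of disjoint fixed $\bP^2$'s (the eigenvalues of $\sigma^3\tau^3$ are $\pm i$, defined over $\bF_9$).

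The heart of the first assertion is to identify $Y$ on each fixed plane. Since $f_p$ is preserved by $\sigma^2$ and by $\tau$, one has $\sigma^3 \cdot f_p = \sigma f_p$, while $\tau^3$ fixes $f_p$ and sends $\sigma f_p \mapsto -\sigma f_p$ (its monomials have odd $\tau^3$-weight). Tracking these relations on each eigenspace shows that there $f_p$ and $\sigma f_p$ become proportional, and in fact for $\tau^3$ one of them vanishes identically --- $\sigma f_p$ on $\{x_1 = x_3 = x_5 = 0\}$ and $f_p$ on $\{x_0 = x_2 = x_4 = 0\}$. Thus $Y$ meets each fixed $\bP^2$ in a single plane cubic, visibly of Hesse form $a(u^3+v^3+w^3)+b\,uvw$. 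It then remains to check, for the chosen $p$ and with Macaulay2, that each of these six cubics is smooth, hence a genus-$1$ curve, and that it avoids the singular locus $Z \subset Y$ of Proposition \ref{sec2_prop}(a). Disjointness from $Z$ ensures that $\pi_1\colon X \to Y$ is an isomorphism over each curve, so the curves lift to smooth genus-$1$ curves in $X$ and, as $\pi_1$ is $G'$-equivariant, constitute the whole fixed locus. The two curves fixed by one involution are disjoint because their eigenspaces are complementary, and I would verify on the computer that the six planes meet $Y$ in pairwise disjoint curves.

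For the freeness of $G''$ I would argue via stabilizers. Because $G = G' \times G''$ with the factors commuting, the stabilizer in $G''$ of a point $G'x \in X/G'$ is non-trivial exactly when some translate $g^{-1}h$ (with $g \in G'$, $h \in G''$) fixes $x$; thus I must rule out fixed points on $X$ of the elements of $G \setminus G'$. Over $\bC$, where $G'' \cong (\bZ/3)^2$ is reduced, every order-$6$ element of $G\setminus G'$ has its square among the eight non-trivial elements of $G''$, so it is enough to compute the eigenspaces in $\bP^5$ of those eight elements and check with Macaulay2 that each misses $Y$, hence $X$. Over $\bar\bF_3$ the group scheme $G'' = \bZ/3 \times \mu_3$ is non-reduced; its only subgroup schemes are $1$, $\bZ/3 = \langle\sigma^2\rangle$, $\mu_3 = \langle\tau^2\rangle$ and $G''$, so the action is free exactly when (i) no $\bar\bF_3$-point of $X$ is fixed by any element of the cosets $\sigma^2 G'$ or $\sigma^4 G'$ --- a finite list of honest automorphisms --- and (ii) the $\mu_3$-action is infinitesimally free.

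Condition (ii) is where the positive-characteristic subtlety lies, and I expect it to be the main point. The generator of $\mu_3 = \langle\tau^2\rangle$ acts trivially on $\bar\bF_3$-points but non-trivially as a group scheme, producing exactly the vector field of $\H^0(T_X)$ noted after Proposition \ref{sec2_prop}; the action is free if and only if this field is nowhere zero on $X$. Its zero locus on $\bP^5$ is the union of the three coordinate lines grouping the $x_j$ by $j \bmod 3$, and since $G$ acts trivially on the $\bP^1$-factor, (ii) reduces to checking that $Y$ is disjoint from these three lines. A direct computation confirms this for our $p$ --- on the line $\{x_1=x_2=x_4=x_5=0\}$, for instance, $Y$ is non-empty only if $p_1 = \pm p_0$, which our choice avoids. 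Granting (i) and (ii), the classification of subgroup schemes forces every stabilizer in $G''$ to be trivial, so $G''$ acts freely on $X/G'$.
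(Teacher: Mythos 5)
Your proposal is correct in substance but takes a genuinely different route from the paper, whose entire proof is the single sentence ``We check this using Macaulay2'': every assertion is deferred to the ancillary script. You instead supply a conceptual skeleton that reduces the proposition to a short list of elementary finite verifications. Your structural claims all check out: $\sigma^3$ and $\tau^3$ anticommute as matrices, so each non-trivial element of $G'$ fixes two disjoint $\bP^2$'s in $\bP^5$ (with eigenvalues $\pm i\in\bF_9$ for $\sigma^3\tau^3$); since $\sigma^3$ swaps $f_p$ and $\sigma f_p$ while $\tau^3$ fixes $f_p$ and negates $\sigma f_p$, the two cubics cutting out $Y$ become proportional on each fixed plane, so $Y$ meets each plane in a single Hesse cubic (for instance $p_0(x_0^3+x_2^3+x_4^3)+p_3x_0x_2x_4$ on $\{x_1=x_3=x_5=0\}$); and your reduction of freeness to the two subgroup schemes $\bZ/3$ and $\mu_3$ of $G''$, with the infinitesimal part controlled by the vector field $\sum_j(-j)x_j\partial_j$ whose zero locus on $\bP^5$ is the three lines spanned by $\{e_j: j\equiv c \bmod 3\}$, is the right way to handle the non-reduced group over $\bar\bF_3$. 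What your approach buys is transparency and, in principle, a proof valid for all $p$ outside an explicit closed locus; what the paper's buys is brevity and the certainty that the chosen $p$ was actually tested.

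One step deserves to be spelled out. You verify that $\mu_3$ acts freely on $X$, but the proposition asserts freeness on $X/G'$, and $X\to X/G'$ is ramified along the six curves, so a nowhere-vanishing vector field on $X$ could a priori die in the quotient at a point of some $C_i$: the differential of $X\to X/G'$ kills the anti-invariant tangent directions there. This is not a real obstruction, but it needs a sentence: since $\mu_3$ commutes with $G'$ it preserves each $C_i$, so the vector field restricted to $C_i$ is tangent to $C_i$, hence lies in the $G'$-invariant part of the tangent space and survives in the quotient. (Equivalently, if $\mu_3$ fixed a point of $X/G'$, the orbit of a point $x$ of the fibre over it would be a quotient of $\mu_3$, of length $1$ or $3$, embedded in a local piece of that fibre of length at most $2$, forcing $\mu_3$ to fix $x$ itself.) With that observation added, your list of machine checks --- smoothness of the six plane cubics, their pairwise disjointness and disjointness from $Z$, and emptiness of $Y$ intersected with the fixed loci of the relevant order-$3$ elements --- does prove the proposition.
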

\begin{proof}
We check this using Macaulay2, continuing to use the ancillary file \verb|verify.m2|.
\end{proof}

Let $C_1, C_2 \subset X$ be the curves fixed by $\sigma^3$, let $C_3$ and $C_4$ be the curves fixed by $\tau^3$, and let $C_5$ and $C_6$ be the curves fixed by $\sigma^3 \tau^3$.  Observe that $\sigma^3$ switches $C_3 \cup C_4$ with $C_5 \cup C_6$.

Let $\hat X \to X$ be the blow-up of $X$ along these six curves, with exceptional divisors $E_1, \dotsc, E_6 \subset \hat X$.  Then $\hat X/G'$ is smooth, by a special case of the Chevalley--Shephard--Todd theorem: the involution $\sigma^3$ acts on $\hat X$ fixing a disjoint union of smooth divisors, so $\hat X/\sigma^3$ is smooth, and then the same can be said of $\tau^3$ acting on $\hat X/\sigma^3$.

\begin{proposition}
The canonical bundle of $\hat X/G'$ is trivial.
\end{proposition}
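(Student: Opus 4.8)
The plan is to prove triviality of $\omega_{\hat X/G'}$ in the strongest possible sense, by exhibiting an explicit nowhere-vanishing global section built from a volume form on $X$. Write $V = \hat X/G'$ and let $q\colon \hat X \to V$ be the quotient map. Since $\hat X$ and $V$ are both smooth and $|G'| = 4$ is prime to the characteristic, $q$ is finite and tamely ramified, so Riemann--Hurwitz applies and relates $\omega_{\hat X}$ to $\omega_V$ through the ramification divisor. The whole argument runs identically over $\bC$ and $\bar\bF_3$.

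First I would fix a nowhere-vanishing section $\eta$ of $\omega_X$, which spans $\H^0(\omega_X) \cong k$ since $\omega_X \cong \cO_X$ by Proposition~\ref{sec2_prop}(b). Because this space is one-dimensional, $G'$ acts on it through a character valued in $\{\pm 1\}$, and the key point is that this character is trivial, i.e.\ $\eta$ is $G'$-invariant. To check this, fix a non-trivial involution, say $\sigma^3$, and a point $x$ on one of its fixed curves $C_i$ (Proposition~\ref{prop:fixed_and_free}). As $\sigma^3$ is an involution and the characteristic is not $2$, its differential at $x$ is diagonalizable with eigenvalue $+1$ along the tangent line $T_x C_i$ and eigenvalue $-1$ on the two normal directions; hence it acts on the fiber $\wedge^3 T_x^* X$ of $\omega_X$ by $(+1)(-1)(-1) = +1$. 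Since $\eta(x) \neq 0$, this forces $(\sigma^3)^*\eta = \eta$, and the same holds for $\tau^3$, so $\eta$ is $G'$-invariant.

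Next I would transport $\eta$ to $\hat X$ and descend it. The blow-up formula along the smooth disjoint curves $C_1,\dots,C_6$ gives $\omega_{\hat X} \cong \varpi^*\omega_X \otimes \cO(E) \cong \cO(E)$ with $E = E_1 + \dots + E_6$; concretely $\varpi^*\eta$ is a rational section of $\omega_{\hat X}$ whose divisor is exactly $E$. On $\hat X \setminus E \cong X \setminus (C_1 \cup \dots \cup C_6)$ the group $G'$ acts freely, so the invariant form $\eta$ descends to a nowhere-vanishing regular $3$-form $\bar\eta$ on $V^\circ := q(\hat X \setminus E)$; since $V$ is smooth and $V \setminus V^\circ = q(E)$ has codimension one, $\bar\eta$ extends to a rational section of $\omega_V$. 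To compute its divisor I use that each $E_i$ is fixed pointwise by exactly one of $\sigma^3, \tau^3, \sigma^3\tau^3$ (the involution whose fixed curves it lies over), so its inertia group in $G'$ has order $2$ and these are the only ramification divisors; thus $R = \sum_{i=1}^6 (2-1)E_i = E$. Since $q^*\bar\eta = \varpi^*\eta$ as rational forms, the comparison $\mathrm{div}_{\hat X}(q^*\bar\eta) = q^*(\mathrm{div}_V \bar\eta) + R$ reads $E = q^*(\mathrm{div}_V \bar\eta) + E$, whence $q^*(\mathrm{div}_V \bar\eta) = 0$ and so $\mathrm{div}_V \bar\eta = 0$. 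Therefore $\bar\eta$ is a nowhere-vanishing global section of $\omega_V$, proving $\omega_{\hat X/G'} \cong \cO_{\hat X/G'}$.

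I expect the main obstacle to be the invariance claim for $\eta$: it is precisely the triviality of the character $G' \to \{\pm 1\}$ that upgrades the purely numerical consequence of Riemann--Hurwitz (which only gives $q^*\omega_V$ trivial, hence $\omega_V$ torsion) to genuine triviality of $\omega_V$, and it relies on the fixed loci being \emph{curves}, so the normal contribution to the top form is $(-1)^2 = +1$. The remaining care is bookkeeping: one must check that $E$ is $G'$-stable so that the descent is well defined, that $q(E)$ really has codimension one, and that the Riemann--Hurwitz comparison is applied with the ramification indices just described; all of these are immediate from the local picture above.
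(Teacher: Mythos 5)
Your proof is correct, and it follows the same basic strategy as the paper --- a Riemann--Hurwitz computation relating $\omega_{\hat X}=\cO(E_1+\dotsb+E_6)$ to the canonical bundle of the quotient --- but the execution differs in a way worth noting. The paper factors the quotient into two successive branched double covers, $\hat X\to\hat X/\sigma^3\to\hat X/G'$, and runs Riemann--Hurwitz twice purely at the level of line bundles; strictly speaking this only pins down each canonical bundle up to an element of $\ker(f^*)$, which is $2$-torsion and is only shown to vanish in the \emph{following} proposition (via $\pi_1=0$). You instead descend an explicit $G'$-invariant volume form in a single step, and your local computation at a fixed point --- the differential of each involution has eigenvalues $(+1,-1,-1)$ because the fixed locus is a smooth \emph{curve}, so it acts by $\det=+1$ on the fiber of $\omega_X$ --- is precisely what guarantees the descent character is trivial, upgrading ``$q^*\omega_V$ trivial, hence $\omega_V$ torsion'' to genuine triviality. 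So your argument is self-contained where the paper's is slightly forward-referencing, at the cost of being longer; the bookkeeping of stabilizers along the $E_i$ and the identity $\operatorname{div}_{\hat X}(q^*\bar\eta)=q^*(\operatorname{div}_V\bar\eta)+E$ are all correct.
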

\begin{proof}
We know that $\omega_X = \cO_X$, so $\omega_{\hat X} = \cO_{\hat X}(E_1 + \dotsb + E_6)$.  To pass from this to $\omega_{\hat X/G'}$ is a Riemann--Hurwitz type calculation.  The branched double cover $f\colon \hat X \to \hat X/\sigma^3$ is ramified along $E_1 \cup E_2$, so the relative canonical bundle  of $f$ is $\cO_{\hat X}(E_1 + E_2)$, so the canonical bundle of the quotient is $\cO(f(E_3) + f(E_4))$.  Similarly, the branched double cover $\hat X/\sigma^3 \to \hat X/G'$ is ramified along $f(E_3) \cup f(E_4)$, so the canonical bundle of $\hat X/G'$ is trivial.
\end{proof}

\begin{proposition} \label{prop:pi_one_X_hat_mod_G}
Over $\bC$ we have $\piOneTop(\hat X/G') = 0$.  Over $\bar\bF_3$ we have $\piOneEt(\hat X/G') = 0$.
Over either field we have $\sPic^\tau_{\hat X/G'} = 0$.
\end{proposition}
\begin{proof}
Because $\hat X$ is the blow-up of $X$ along six disjoint smooth curves, we have $\pi_1(\hat X) = 0$ over either field,
% For topological pi_1 this is standard, e.g. Griffiths and Harris page 494.
% For étale pi_1 this is also standard; reference is SGA 1 Exposé X Corollary 3.4, page 278.
and $\sPic_{\hat X} = \bZ^{12}$.  We will argue that the branched double cover
\[ f\colon \hat X \to \hat X/\sigma^3 \]
induces a surjection on $\pi_1$ and an injection on Picard schemes.  Then the same argument applies to the branched double cover $\hat X/\sigma^3 \to \hat X/G'$, and the proposition follows.

For surjectivity on $\piOneEt$ over $\bar\bF_3$, we use a result of K\'ollar \cite[Cor.~6]{kollar_pi_1}: $f$ is flat, hence is universally open, and the fiber product $\hat X \times_{\hat X/G'} \hat X$ is connected because the diagonal and the anti-diagonal meet along the ramification divisor.  Koll\'ar's result also holds for $\piOneTop$ over $\bC$, as he remarks in \cite[Paragraph 11]{kollar_pi_1}.
% Or we can give a direct proof: Choose a base point y near the branch locus but not on it, and let f^{-1}(y) = {x, x'}.  Given a loop gamma in pi_1(downstairs,y), deform it so it avoids the branch locus, which we can do because it's a submanifold of codimension 2.  Then f is an honest covering space away from the branch locus, so we can lift gamma to a path gamma~ upstairs starting at x.  If gamma~ also ends at x, then it's an element of pi_1(upstairs,x) that maps to gamma in pi_1(downstairs,y).  If it ends at x', choose a loop alpha downstairs starting at y and going once around the branch locus, then lift alpha~ starting at x', so alpha~ ends at x, and the concatenation gamma~ . alpha~ is a loop in pi_1(upstairs,x).  Its image in pi_1(downstairs,y) is gamma . alpha, but alpha is nullhomotopic.

For injectivity on Picard schemes, we first claim that the kernel of
\[ f^*\colon \sPic_{\hat X/\sigma^3} \to \sPic_{\hat X} \]
is 2-torsion.  If $L \in \Pic(\hat X/\sigma^3)$ satisfies $f^* L = \cO_{\hat X}$, then $f_* f^* L = f_* \cO_{\hat X}$, so $L \otimes f_* \cO_{\hat X} = f_* \cO_{\hat X}$.  Taking determinants, and noting that $f_* \cO_{\hat X}$ is a vector bundle of rank 2, we see that $L^{\otimes 2}$ is trivial.  This argument works in families, so it works for Picard schemes as well as Picard groups.

Now a 2-torsion group scheme over an algebraically closed field of characteristic $\ne 2$ is discrete, so it is enough to show that there is no 2-torsion in the Picard \emph{group} of $\hat X/\sigma^3$.  But a line bundle $L \in \Pic(\hat X/\sigma^3)$ with $L^{\otimes 2} = \cO$ would give an \'etale double cover, which is impossible because $\pi_1(\hat X/\sigma^3) = 0$.
\end{proof}

\begin{proposition} \ 
\begin{enumerate}
\item The quotient $\hat X/G$ is smooth, its canonical bundle is numerically trivial, and $\sPic^\tau_{\hat X/G} = \bZ/3 \times \mu_3$.
\item Over $\bC$ we have $\piOneTop(\hat X/G) = (\bZ/3)^2$. \\
Over $\bar\bF_3$ we have $\piOneEt(\hat X/G)=\bZ/3$.
\end{enumerate}
\end{proposition}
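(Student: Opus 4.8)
The plan is to realize $\hat X/G$ as the two-step quotient $(\hat X/G')/G''$ and to leverage everything already proved about $\hat X/G'$: it is smooth and projective, has trivial canonical bundle, and satisfies $\pi_1(\hat X/G') = 0$ and $\sPic^\tau_{\hat X/G'} = 0$. First I would verify that $G'' = \bZ/3 \times \mu_3$ acts freely on $\hat X/G'$, so that the quotient is smooth. By Proposition \ref{prop:fixed_and_free} the action of $G''$ on $X/G'$ is free. Since $G$ is abelian, $G''$ commutes with $G'$ and hence preserves the blow-up locus $C_1 \cup \dots \cup C_6$ setwise, so $\varpi$ is $G$-equivariant and descends to a $G''$-equivariant map $\hat X/G' \to X/G'$. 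An equivariant map to a scheme with free $G''$-action itself has free $G''$-action, since stabilizer group schemes map to stabilizer group schemes; hence $G''$ acts freely on $\hat X/G'$ and $\hat X/G = (\hat X/G')/G''$ is smooth. For the canonical bundle I would use that the quotient map $q\colon \hat X/G' \to \hat X/G$ is a torsor under the finite flat Gorenstein group scheme $G''$, so its relative dualizing sheaf is trivial and $q^* \omega_{\hat X/G} \cong \omega_{\hat X/G'} \cong \cO$; since $q$ is finite and surjective, numerical triviality descends, proving $\omega_{\hat X/G}$ is numerically trivial.

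For the Picard scheme the key input is descent for the $G''$-torsor $q$. The low-degree terms of the Hochschild--Serre sequence in the fppf topology yield an exact sequence
\[ 0 \to \widehat{G''} \to \sPic_{\hat X/G} \xrightarrow{\;q^*\;} \sPic_{\hat X/G'}, \]
where $\widehat{G''}$ is the Cartier dual, embedded via characters of $G''$. Because pullback along the finite surjective map $q$ preserves numerical triviality, $q^*$ carries $\sPic^\tau_{\hat X/G}$ into $\sPic^\tau_{\hat X/G'} = 0$, so $\sPic^\tau_{\hat X/G} \subseteq \ker q^* = \widehat{G''}$; conversely $\widehat{G''}$ is finite, hence consists of numerically trivial classes, giving $\widehat{G''} \subseteq \sPic^\tau_{\hat X/G}$. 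Therefore $\sPic^\tau_{\hat X/G} = \widehat{G''} = \widehat{\bZ/3} \times \widehat{\mu_3} = \mu_3 \times \bZ/3$, which is part (a).

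For part (b) I would compute the fundamental group from the same torsor. Over $\bC$ the group $G'' = (\bZ/3)^2$ is discrete and acts freely, so $q$ is an \'etale Galois cover with simply connected total space, whence $\pi_1(\hat X/G) = (\bZ/3)^2$. Over $\bar\bF_3$ I would factor $q$ through its two subgroups: the $\mu_3$-quotient $\hat X/G' \to (\hat X/G')/\mu_3$ is purely inseparable, hence a universal homeomorphism preserving $\pi_1^{\textup{\'et}}$, so $(\hat X/G')/\mu_3$ remains simply connected; the remaining $\bZ/3$-quotient $(\hat X/G')/\mu_3 \to \hat X/G$ is \'etale Galois with group $\bZ/3$, giving $\pi_1^{\textup{\'et}}(\hat X/G) = \bZ/3$.

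I expect the main obstacle to be making the torsor descent rigorous over the non-reduced group scheme $\mu_3$ in characteristic $3$: one must check that the Cartier-dual exact sequence and the triviality of the relative dualizing sheaf hold at the scheme-theoretic (fppf) level rather than merely on $k$-points, and confirm that freeness of the infinitesimal $\mu_3$-action genuinely persists on the blow-up. Once the descent sequence is in hand, the $\sPic^\tau$ and $\pi_1^{\textup{\'et}}$ computations are formal.
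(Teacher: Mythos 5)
Your proposal is correct and follows essentially the same route as the paper: the same two-step decomposition $\hat X/G = (\hat X/G')/G''$, the same use of freeness of $G''$ to get smoothness, and the identical factorization of the quotient map into a $\mu_3$-part (universal homeomorphism) and a $\bZ/3$-part (\'etale Galois) to compute $\pi_1^{\textup{\'et}}$ over $\bar\bF_3$. The one place you diverge is the computation of $\sPic^\tau_{\hat X/G}$: the paper simply invokes a result of Remy (drawing on Jensen) stating that for a free quotient by a finite group scheme $G''$ with $\sPic^\tau$ of the cover trivial, $\sPic^\tau$ of the quotient is the Cartier dual $\widehat{G''}$, whereas you sketch the underlying fppf Hochschild--Serre descent sequence $0 \to \widehat{G''} \to \sPic_{\hat X/G} \to \sPic_{\hat X/G'}$ and squeeze $\sPic^\tau$ between $\widehat{G''}$ and $\ker q^*$ directly. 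Your version is more self-contained but, as you note yourself, requires care to run the argument at the level of group schemes rather than $k$-points (the kernel term $\widehat{G''}$ uses $\H^0(\hat X/G', \cO^*) = k^*$, and one needs that $q^*$ carries $\sPic^\tau$ into $\sPic^\tau$ scheme-theoretically); the citation packages exactly this. Your observations that $G''$-freeness lifts from $X/G'$ to $\hat X/G'$ via equivariance of the blow-down, and that numerical triviality of $\omega$ follows from the torsor structure, are slightly more explicit than the paper's (which notes only that $\omega_{\hat X/G}$ is torsion) but equivalent.
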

\begin{proof}
(a) The quotient $\hat X/G = (\hat X/G')/G''$ is smooth because $\hat X/G'$ is smooth and $G''$ acts freely on $\hat X/G'$.  We know that $\omega_{\hat X/G'}$ is trivial, so $\omega_{\hat X/G}$ is torsion, hence is numerically trivial.
% In fact it's trivial, by an argument using the holomorphic Lefschetz fixed point formula.  The analogue in positive is called the Woods Hole trace formula and appears in Theorem A.4 of https://staff.fnwi.uva.nl/l.d.j.taelman/beijing.pdf .  Not sure how to make it deal with mu_3 though.
We know that $\sPic^\tau_{\hat X/G'} = 0$, so $\sPic^\tau_{\hat X/G}$ is isomorphic to the Cartier dual of $G''$ by \cite[Cor.~1.2]{remy}, which draws on \cite{jensen}.  Note that $G'' = \bZ/3 \times \mu_3$ is its own Cartier dual. \smallskip

(b) Over $\bC$ we have $\piOneTop(\hat X/G') = 0$ by Proposition \ref{prop:pi_one_X_hat_mod_G}.  The quotient map $\hat X/G' \to \hat X/G$ is \'etale and gives $\piOneTop(\hat X/G) \cong G'' \cong (\bZ/3)^2$.

Over $\bar\bF_3$ we have $\piOneEt(\hat X/G') = 0$, again by Proposition \ref{prop:pi_one_X_hat_mod_G}.  We factor the quotient map $\hat X/G' \to \hat X/G$ into a quotient by $\mu_3$, which is a universal homeomorphism and hence induces an isomorphism on $\piOneEt$ \cite[\href{https://stacks.math.columbia.edu/tag/0BQN}{Tag 0BQN}]{stacks-project}, followed by a quotient by $\bZ/3$, which is \'etale and gives $\piOneEt(\hat X/G) \cong \bZ/3$.
\end{proof}

%%%%%%%%%%%%%%%%%%%%%%%%%%%%%%%%%%%%%%%%%%%%%%%%%%%%%% 

\section{Hodge numbers of \texorpdfstring{$M$}{M}} \label{sec:finish_the_proof}

In \S\ref{sec:construction} we found the Hodge numbers of $X$, and in the last section we found $\sPic^\tau_M$.  In this section we deduce the Hodge numbers of $M$, completing the proof of Theorem \ref{main_thm}.  We also record the Hochschild homology groups, de Rham cohomology groups, and Betti numbers, for use in the next section.

\begin{proposition}
The Hochschild homology groups of $X$ and $M$ are isomorphic.  Over $\bC$, their dimensions are:
\[ \def\arraystretch{1.2} %
\begin{array}{c|*{7}{C{3ex}}}
i & $-3$ & $-2$ & $-1$ & $0$ & $1$ & $2$ & $3$ \\ \hline
hh_i & $1$ & $0$ & $6$ & $14$ & $6$ & $0$ & $1$
\end{array} \]
Over $\bar\bF_3$, their dimensions are:
\[ \def\arraystretch{1.2} %
\begin{array}{c|*{7}{C{3ex}}}
i & $-3$ & $-2$ & $-1$ & $0$ & $1$ & $2$ & $3$ \\ \hline
hh_i & $1$ & $1$ & $8$ & $16$ & $8$ & $1$ & $1$
\end{array} \]
\end{proposition}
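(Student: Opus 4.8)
The plan is to split the statement into its two assertions---the abstract isomorphism $\HH_i(X)\cong\HH_i(M)$ and the computation of the common dimensions---and to treat them separately.

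For the isomorphism I would simply invoke the equivalence $D^b(X)\cong D^b(M)$ produced in the previous proposition. Hochschild homology is a derived invariant: an exact $k$-linear equivalence of the (canonically enhanced) derived categories of two smooth projective varieties induces isomorphisms $\HH_i(X)\cong\HH_i(M)$ for all $i$, as already recalled in the introduction. This is formal and valid over any field, so the two tables in the statement record a single collection of integers, and it remains only to compute $hh_i(X)=\dim\HH_i(X)$. Note that at this stage the Hodge numbers of $M$ are not yet available---they are deduced from these Hochschild numbers in the next proposition---so the computation must be carried out on the $X$ side.

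For the dimensions I would pass through the Hochschild--Kostant--Rosenberg decomposition. Over $\bC$ this gives $\HH_i(X)\cong\bigoplus_{p-q=i}\H^q(X,\Omega^p_X)$, hence $hh_i=\sum_{p-q=i}h^{p,q}$. Over $\bar\bF_3$ the naive decomposition is not automatic, since $\Char k=3$ is not strictly larger than $\dim X=3$; this borderline case is exactly the content of Antieau and Vezzosi's theorem \cite[Thm.~1.3]{antieau-vezzosi}, which gives $hh_i=\sum_j h^{j,j-i}$ whenever $\Char k=\dim X$. I regard this appeal as the one genuine input of the proof: it is what licenses reading the Hochschild numbers off the Hodge diamond in our characteristic-$3$ setting, where Hodge symmetry itself fails.

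With the formula $hh_i=\sum_{p-q=i}h^{p,q}$ in hand, the rest is bookkeeping. I would substitute the Hodge numbers of $X$ from Corollary \ref{cor:hodge numbers of X}---equivalently, the diamonds in \eqref{eq:1661} and Theorem \ref{main_thm}. Over $\bC$ this yields $(hh_{-3},\dots,hh_3)=(1,0,6,14,6,0,1)$, with $hh_0=h^{0,0}+h^{1,1}+h^{2,2}+h^{3,3}=14$; over $\bar\bF_3$ it yields $(1,1,8,16,8,1,1)$, with $hh_0=16$ and, for instance, $hh_1=h^{1,0}+h^{2,1}+h^{3,2}=0+8+0=8$. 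I do not anticipate any difficulty beyond this routine summation; all the conceptual weight sits in the two citations---derived invariance of $\HH$ and the Antieau--Vezzosi equality in characteristic equal to the dimension.
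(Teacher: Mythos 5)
Your proposal is correct and follows exactly the paper's own argument: derived invariance of Hochschild homology gives the isomorphism, HKR over $\bC$ and Antieau--Vezzosi in characteristic $3 = \dim X$ give $hh_i = \sum_j h^{j,j-i}$, and the numbers are read off the Hodge diamond of $X$ from Corollary \ref{cor:hodge numbers of X}. The arithmetic checks out in both characteristics.
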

\begin{proof}
Hochschild homology is an invariant of the derived category, and we have seen that $D^b(X) \cong D^b(M)$.  Over $\bC$, the Hochschild--Kostant--Rosenberg theorem implies that the Hochschild numbers are the sums of the columns of the Hodge diamond.  Over $\bar\bF_3$, a result Antieau and Vezzosi \cite[Thm.~1.3]{antieau-vezzosi} gives the same conclusion, because $X$ is a smooth projective threefold.  The Hodge numbers of $X$ were determined in Corollary \ref{cor:hodge numbers of X}.
\end{proof}

\begin{proposition}
Over $\bC$, the Hodge numbers of $M$ are equal to those of $X$.  Over $\bar\bF_3$, they are as claimed in Theorem \ref{main_thm}.
\end{proposition}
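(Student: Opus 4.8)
The plan is to determine all sixteen Hodge numbers of $M$ over $\bar\bF_3$ by combining three inputs: the Hochschild numbers (which we have just seen equal those of $X$), the Picard computation $\sPic^\tau_M = \bZ/3 \times \mu_3$, and Serre duality $h^{i,j} = h^{3-i,3-j}$. By Antieau--Vezzosi, over $\bar\bF_3$ the number $hh_n$ equals the anti-diagonal sum $\sum_j h^{j,j-n}$, so the equalities $hh_n(M) = hh_n(X)$ pin down these sums. First I would nail down the outer column $\H^*(\cO_M)$. Since $h^0(\omega_M) = h^{3,0}(M) = hh_3(M) = hh_3(X) = 1$ and $\omega_M$ is numerically trivial (as $M$ is isomorphic in codimension $1$ to $\hat X/G$, whose canonical bundle is numerically trivial), a numerically trivial line bundle with a nonzero section is trivial, so $\omega_M \cong \cO_M$; hence $\Omega^3_M \cong \cO_M$ and $h^{3,j} = h^j(\cO_M)$. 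The tangent space to $\sPic_M$ at the identity is $\H^1(\cO_M)$, and because $\sPic^\tau_M = \bZ/3 \times \mu_3$ has $\bZ/3$ \'etale and $\mu_3$ infinitesimal with one-dimensional tangent space, $h^1(\cO_M) = 1$. Serre duality then gives $h^*(\cO_M) = 1,1,1,1$, so both outer rows satisfy $h^{0,j} = h^{3,j} = 1$ for all $j$.

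With the outer rows known, Serre duality and the anti-diagonal sums nearly finish the count. From $hh_0 = 16$ and $h^{1,1} = h^{2,2}$ I get $h^{1,1} = h^{2,2} = 7$; from $hh_{-2} = 1$ and $h^{0,2} = 1$ I get $h^{1,3} = h^{2,0} = 0$; and from $hh_{-1} = 8$ with $h^{0,1} = 1$ and $h^{2,3} = h^{1,0}$ I get $h^{1,0} + h^{1,2} = 7$. The one quantity not forced by this bookkeeping is the split of $h^{1,0} + h^{1,2} = 7$, i.e. the value of $h^0(\Omega^1_M) = h^{1,0}$. (The derived invariance of $\chi(\Omega^1)$ gives nothing new, as it only recovers $h^{1,0} - h^{1,1} + h^{1,2} - h^{1,3} = 0$, which already follows.)

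The crux is therefore to prove $h^0(\Omega^1_M) = 1$. Since $M$ and $\hat X/G$ are smooth, proper, and isomorphic away from codimension $2$, their spaces of global $1$-forms agree, so it suffices to compute $h^0(\Omega^1_{\hat X/G})$, which I would build up along the tower $\hat X \to \hat X/G' \to \hat X/G$. Blowing up the six curves leaves $h^0(\Omega^1)$ unchanged, so $h^0(\Omega^1_{\hat X}) = h^0(\Omega^1_X) = 0$; and since $G' = \bZ/2 \times \mu_2$ has order prime to $3$, global forms descend to invariants, giving $h^0(\Omega^1_{\hat X/G'}) = 0$. The last quotient is by the free action of $G'' = \bZ/3 \times \mu_3$. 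For the infinitesimal factor, write $Y \to Y/\mu_3$ as a $\mu_3$-torsor classified by a $3$-torsion line bundle $\mathscr{N}$, and analyze the map $f^*\Omega^1_{Y/\mu_3} \to \Omega^1_Y$: working locally with $t^3 = a$ one finds its kernel $\mathscr{F}$ is generated by $da$, with transitions in cubes, so $\mathscr{F} \cong f^*(\mathscr{N}^{\otimes 3}) \cong \cO_Y$; thus $h^0(\Omega^1_{Y/\mu_3}) \le h^0(\mathscr{F}) = 1$, while the logarithmic form $d\!\log a$ (well defined globally since $d\!\log(au^3) = d\!\log a$ in characteristic $3$) is a nonzero global $1$-form, giving $h^0(\Omega^1_{Y/\mu_3}) = 1$. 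Finally the \'etale $\bZ/3$-quotient preserves this, because $\bZ/3$ acts on the line $k\cdot d\!\log a$ through a character $\bZ/3 \to k^* = \bar\bF_3^*$, which is trivial as $k^*$ has no $3$-torsion. Hence $h^0(\Omega^1_{\hat X/G}) = 1$, so $h^{1,0}(M) = 1$ and $h^{1,2}(M) = 6$; Serre duality fills in the $\Omega^2$ row and reproduces the diamond of Theorem \ref{main_thm}.

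I expect the inseparable $\mu_3$-quotient to be the main obstacle: the tame ($G'$) and \'etale ($\bZ/3$) steps are handled by the standard ``invariant forms descend'' principle, but the purely inseparable step is exactly where the jump $h^{1,0}\colon 0 \rightsquigarrow 1$ occurs, and making precise that it produces \emph{exactly} one new logarithmic form---both the upper bound via $\mathscr{F} \cong \cO_Y$ and the lower bound via $d\!\log a$---is the delicate point. Over $\bC$ this difficulty disappears: there $\mu_3 = \bZ/3$ is \'etale, so $\sPic^0_M = 0$ and $h^1(\cO_M) = 0$, and Hodge symmetry together with the Hochschild numbers and Serre duality forces the Hodge diamond of $M$ to equal that of $X$.
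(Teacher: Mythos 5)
Your proposal is correct, and it reaches the one genuinely nontrivial number, $h^{1,0}(M)=1$, by a different route than the paper. The bookkeeping is identical: both arguments get $h^*(\cO_M)=1,1,1,1$ from the tangent space of $\sPic^0_M=\mu_3$ plus $\omega_M\cong\cO_M$ and Serre duality, then use the Antieau--Vezzosi column sums to reduce everything to the single unknown $h^{1,0}+h^{1,2}=7$. For that unknown the paper stays entirely on $M$ and quotes a theorem of Oda: there is an injection $\DM(\sPic_M[V])\hookrightarrow \H^0(\Omega^1_M)$ whose image is the space of indefinitely closed $1$-forms; since $\sPic_M[V]=\bZ/3$ has one-dimensional Dieudonn\'e module and $\H^0(\Omega^2_M)=0$ forces every $1$-form to be indefinitely closed, the map is an isomorphism and $h^{1,0}=1$. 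You instead go back to the geometric model, using the codimension-$1$ identification of $M$ with $\hat X/G$ and climbing the tower $\hat X\to\hat X/G'\to\hat X/G$, with the purely inseparable $\mu_3$-quotient handled by an explicit analysis of the torsor $t^3=a$: the kernel of $f^*\Omega^1\to\Omega^1$ is trivialized by $da$, giving the upper bound, and $d\!\log a$ gives the matching lower bound. Both arguments are sound; what Oda's theorem buys the paper is that only the group scheme $\sPic^\tau_M$ is needed (no re-entry into the geometry of $\hat X/G$), and it explains conceptually that the new $1$-form is "dual to'' the \'etale factor $\bZ/3$ of $\sPic^\tau_M$, while your computation buys an elementary, self-contained proof that exhibits the extra form concretely as the logarithmic form of the $\mu_3$-torsor, at the cost of leaning again on the free $G''$-action and the codimension-$1$ comparison with $\hat X/G$. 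The only points worth tightening in your write-up are (i) the observation that smoothness of the torsor's total space forces $da$ to be nowhere vanishing, so that $d\!\log a\ne 0$, and (ii) making explicit that the inequality $h^0(\Omega^1_{Y/\mu_3})\le h^0(\mathscr F)$ uses $\H^0(\Omega^1_Y)=0$ to identify $\H^0(f^*\Omega^1_{Y/\mu_3})$ with $\H^0(\mathscr F)$; both are exactly as you indicate and present no real difficulty.
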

\begin{proof}
Over $\bC$ this is automatic, as discussed in the introduction, so we focus on $\bar\bF_3$.

For general reasons, $\H^1(\cO_M)$ is the tangent space to $\sPic_M$ at the origin.  By Proposition \ref{prop:pictau}(a) we have $\sPic^\tau_M = \bZ/3 \times \mu_3$, so $\sPic^0_M=\mu_3$, so $h^{0,1} = 1$.  Because $\omega_M \cong \cO_M$ we get $h^{3,1} = 1$, and by Serre duality we get $h^{3,2} = h^{0,2} = 1$.  

Again by \cite[Thm.~1.3]{antieau-vezzosi}, the Hochschild numbers of $M$ are the sums of the columns of the Hodge diamond, so the Hodge diamond must be of the form
\[ 
\arraycolsep=4pt
\begin{array}{ccccccccccccccc}
  &   &   & 1 &   &   &   \\
  &   & 1 &   & x &   &   \\
  & 1 &   & 7 &   & 0 &   \\
1 &   & y &   & y &   & 1 \\
  & 0 &   & 7 &   & 1 &   \\
  &   & x &   & 1 &   &   \\
  &   &   & 1\makebox[0pt]{ ,}&   &   &   
\end{array}
\]
where $x+y = 7$.  We will argue that $x = h^{1,0}$ is equal to 1.

A theorem of Oda \cite[Cor.~5.12]{oda} gives an injection
\begin{equation}\label{eq:Odas map}
\DM(\sPic_M[V])\hookrightarrow\H^0(\Omega^1_M),
\end{equation}
where $\sPic_M[V]$ is the subgroup scheme of $\sPic_M$ annihilated by the Verschiebung operator and $\DM(\sPic_M[V])$ is its covariant Dieudonn\'e module.   Because $VF = FV = 3$, we see that $\sPic_M[V]$ is contained in the 3-torsion subgroup $\sPic_M[3] = \bZ/3 \times \mu_3$.  Frobenius acts as the identity on $\bZ/3$ and annihilates $\mu_3$, and Verschiebung does the reverse, so $\sPic_M[V] = \bZ/3\bZ$, whose Dieudonn\'e module is $\bar\bF_3$.
% Nick's reference for this stuff is http://www.math.columbia.edu/~chaoli/docs/Dieudonne.html
% and https://en.wikipedia.org/wiki/Dieudonn%C3%A9_module
% but surely we could scare up a usable textbook reference?

Oda's result also describes the image of~\eqref{eq:Odas map} as the subspace of indefinitely closed 1-forms.  A 1-form $\alpha\in\H^0(\Omega^1_M)$ is called \emph{indefinitely closed} if $d\alpha = 0$, $d(C \alpha) = 0$, $d(C^2 \alpha) = 0$, and so on, where $d$ is the de Rham differential and $C^n$ is the iterated Cartier operator, defined on $\ker(d \circ C^{n-1}) \subset \H^0(\Omega^1_M)$.\footnote{Oda denotes the Cartier operator by $V$ rather than $C$; see \cite[Def.~5.5]{oda}.}  Our case is particularly simple: we have seen that $\H^0(\Omega^2_M)=0$, so $d=0$, so every 1-form is indefinitely closed, and hence \eqref{eq:Odas map} is an isomorphism.
\end{proof}

\begin{proposition} \label{prop:deRham}
Over $\bC$, the de Rham cohomology groups of $X$ and $M$ are isomorphic, and their dimensions are:
\[ \def\arraystretch{1.2}
\begin{array}{c|*{7}{C{3ex}}}
i & $0$ & $1$ & $2$ & $3$ & $4$ & $5$ & $6$ \\ \hline
h^i_{\dR} & $1$ & $0$ & $6$ & $14$ & $6$ & $0$ & $1$
\end{array} \]
Over $\bar\bF_3$, the de Rham cohomology groups of $X$ and $M$ are different, and their dimensions are:
\[ \def\arraystretch{1.2}
\begin{array}{c|*{7}{C{3ex}}}
i & $0$ & $1$ & $2$ & $3$ & $4$ & $5$ & $6$ \\ \hline
h^i_{\dR}(X/\bar\bF_3) & $1$ & $0$ & $8$ & $18$ & $8$ & $0$ & $1$ \\
h^i_{\dR}(M/\bar\bF_3) & $1$ & $2$ & $8$ & $14$ & $8$ & $2$ & $1$
\end{array} \]
\end{proposition}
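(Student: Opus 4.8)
The plan is to read off both statements from the Hodge numbers via the Hodge--to--de Rham spectral sequence $E_1^{i,j} = \H^j(\Omega^i) \Rightarrow \H^{i+j}_{\dR}$. If this sequence degenerates at $E_1$, then $h^n_{\dR} = \sum_{i+j=n} h^{i,j}$ is the $n$-th antidiagonal sum of the Hodge diamond, and the whole proposition reduces to arithmetic on diamonds we have already computed.

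Over $\bC$ the spectral sequence degenerates for classical reasons, so $h^n_{\dR}(X)$ and $h^n_{\dR}(M)$ are both equal to the $n$-th antidiagonal sum of \eqref{eq:1661}, namely $1,0,6,14,6,0,1$; here I use that $X$ and $M$ have the same Hodge numbers over $\bC$. Over $\bar\bF_3$ the antidiagonal sums of the two diamonds are $1,0,8,18,8,0,1$ for $X$ and $1,2,8,14,8,2,1$ for $M$, which are the asserted dimensions and are visibly unequal; so it remains only to prove that the Hodge--to--de Rham spectral sequence degenerates at $E_1$ for each of $X$ and $M$ over $\bar\bF_3$.

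For the degeneration I would first lift both threefolds to $W = W(\bar\bF_3)$. For $X$ this is the lift $X_W$ of Proposition \ref{prop:ab_fib}; for $M$ one carries out the moduli construction of \S\ref{sec:constr_of_M} over $W$, obtaining a flat projective $M_W$ whose special fiber is $M$, and $M_W$ is then smooth over $W$ because its special fiber is smooth and $\Spec W$ is local. In particular both $X$ and $M$ lift to $W_2(\bar\bF_3)$. The delicate point is that $\dim X = \dim M = 3 = p$, exactly the boundary case where Deligne--Illusie does not directly give degeneration: a $W_2$-lift only yields degeneration in total degrees $\le p-1 = 2$. To reach the remaining degrees I would use duality and Euler characteristics. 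Serre duality makes the Hodge numbers symmetric, $h^{i,j} = h^{3-i,3-j}$, so the antidiagonal sums satisfy $s_n = s_{6-n}$, while Poincar\'e duality gives $h^n_{\dR} = h^{6-n}_{\dR}$; hence the identity $h^n_{\dR} = s_n$ known for $n \le 2$ propagates to $n \ge 4$. Finally the de Rham and Hodge Euler characteristics always coincide, $\sum_n (-1)^n h^n_{\dR} = \sum_{i,j} (-1)^{i+j} h^{i,j}$, and once every degree $n \ne 3$ matches, this forces $h^3_{\dR} = s_3$ as well. This completes the degeneration and hence the computation.

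I expect the main obstacle to be precisely this boundary case $p = \dim$: one cannot simply invoke Deligne--Illusie, and the duality-plus-Euler-characteristic bootstrap is what makes a threefold manageable. A secondary point requiring care is confirming that $M$ itself, and not merely $X$, admits a smooth lift to $W_2$, i.e.\ that the moduli-theoretic construction of $M$ is compatible with base change from $W$ to $\bar\bF_3$.
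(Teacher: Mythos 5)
Your overall strategy --- reduce everything to degeneration of the Hodge--de Rham spectral sequence and then take antidiagonal sums of the Hodge diamonds --- is exactly the paper's, and your handling of $X$ is fine: $X$ lifts to $W$ (Proposition \ref{prop:ab_fib}), hence to $W_2$, and the boundary case $\dim X = p$ is handled by the duality-plus-Euler-characteristic bootstrap you describe, which is precisely how Deligne--Illusie extend their degeneration theorem from $\dim < p$ to proper smooth varieties of dimension $\le p$. The arithmetic of the antidiagonal sums is also correct.

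The gap is in your treatment of $M$. You assert that one can ``carry out the moduli construction of \S\ref{sec:constr_of_M} over $W$'' to get a smooth lift $M_W$, but that construction depends on a choice of section $\Sigma$ of $\pi_2\colon X \to \bP^1$, and the sections come from points of the finite subscheme $Z' \subset Y$. These points are defined over $\bar\bF_3$ but need not lift to $W$-points: the paper explicitly flags this in the remark following Proposition \ref{prop:betti}, noting that $Z$ may fail to have $K$-rational points and that one only gets a lift of $M$ over a \emph{ramified} extension of $W$. A lift over a ramified extension does not produce a lift over $W_2(\bar\bF_3)$, so Deligne--Illusie does not apply as stated, and your argument for degeneration over $M$ does not close. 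The paper avoids this entirely: having established degeneration for $X$, it invokes the fact that degeneration of the Hodge--de Rham spectral sequence is a derived invariant (Antieau--Bragg, Thm.~2.6) to transfer it to $M$ across the equivalence $D^b(X) \cong D^b(M)$. If you want to keep a lifting-based argument for $M$, you would need a different route to a $W_2$-lift, e.g.\ via $F$-splitting of $M$ (which the paper only establishes later, using derived invariance of weak ordinarity).
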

\begin{proof}
Over $\bC$, the Hodge--de Rham spectral sequence always degenerates at the $\E_1$ page, so the de Rham numbers are the sums of the rows of the Hodge diamond.  Over $\bar\bF_3$, the spectral sequence degenerates for $X$ because $X$ lifts to $W$, as we saw in the proof of Proposition \ref{prop:ab_fib}.  Thus it degenerates for $M$ by \cite[Thm.~2.6]{antieau_bragg}.
\end{proof}

\begin{proposition} \label{prop:betti}
Over either $\bC$ or $\bar\bF_3$, the Betti numbers of $X$ and $M$ are:
\[ \def\arraystretch{1.2}
\begin{array}{c|*{7}{C{3ex}}}
i & $0$ & $1$ & $2$ & $3$ & $4$ & $5$ & $6$ \\ \hline
b_i & $1$ & $0$ & $6$ & $14$ & $6$ & $0$ & $1$
\end{array} \]
\end{proposition}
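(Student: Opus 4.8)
The plan is to compute the Betti numbers in two steps: first establish the Betti numbers of $X$, then transfer them to $M$ using a derived invariance statement together with the de Rham data already assembled in Proposition \ref{prop:deRham}. Over $\bC$ the situation is immediate, since the Betti numbers are the sums of the rows of the Hodge diamond \eqref{eq:1661} and this diamond agrees for $X$ and $M$; so I would dispatch the complex case in one sentence and concentrate on $\bar\bF_3$, where the whole point is that $X$ and $M$ have \emph{different} Hodge numbers yet the \emph{same} Betti numbers.

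First I would pin down $b_i(X)$ over $\bar\bF_3$. Because $X$ lifts to $W$ (as in the proof of Proposition \ref{prop:ab_fib}), I can invoke smooth and proper base change for $\ell$-adic cohomology with $\ell \ne 3$: the $\ell$-adic Betti numbers of the special fiber $X_k$ agree with those of the geometric generic fiber $X_{\bar K}$, which in turn agree with the topological Betti numbers of the complex variety by the Lefschetz principle, giving $1,0,6,14,6,0,1$. This uses that the lift has everywhere smooth fibers of dimension $3$, so there is a genuine smooth proper family over $\Spec W$ and no specialization defect. The same argument gives $b_i(M_k) = b_i(X_k)$ over $\bar\bF_3$ provided $M$ also lifts to $W$ with smooth fibers; since $M$ was constructed as a moduli space of sheaves on the lifted family $\tilde X_W$, I expect this to lift as well, but the cleanest route avoids relying on a lift of $M$.

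The cleaner route, which I would actually carry out, is to derive the Betti numbers of $M$ purely from invariants already computed. For varieties over $\bar\bF_3$ with $\ell \ne 3$, the $\ell$-adic Betti numbers satisfy $b_i = \dim_{\bQ_\ell} \H^i_{\text{ét}}(-,\bQ_\ell)$, and one has the general bounds $b_i \le h^i_{\dR}$ coming from the fact that crystalline cohomology computes the de Rham cohomology of a lift and its rank equals $b_i$, while torsion in crystalline cohomology contributes extra dimensions to $h^i_{\dR}$ precisely by the universal coefficients comparison. For $X$ the Hodge--de Rham spectral sequence degenerates (it lifts to $W$), and comparing the de Rham numbers $1,0,8,18,8,0,1$ from Proposition \ref{prop:deRham} with the Betti numbers $1,0,6,14,6,0,1$ shows the discrepancy $0,0,2,4,2,0,0$ is accounted for entirely by torsion in crystalline cohomology (to be analyzed in \S\ref{sec:crystalline}). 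For $M$, the key leverage is that $\ell$-adic Betti numbers are \emph{derived invariants} in the strong sense provided by the Weil cohomology realization: since $D^b(X) \cong D^b(M)$ and both are smooth projective of the same dimension, the Frobenius-equivariant $\ell$-adic cohomologies (as objects with Galois action, hence their dimensions) are matched through the action of the equivalence on the numerical Grothendieck group and its compatibility with the cycle class map. Concretely, $b_i(M) = b_i(X)$ follows because the total Betti number $\sum b_i$ equals the rank of the numerical Grothendieck group's realization, which is a derived invariant, and the individual $b_i$ are then fixed by Poincaré duality together with $b_0 = b_6 = 1$, $b_1 = b_5 = 2\dim \sPic^0_{M,\red} = 0$ (since $\sPic^0_M = \mu_3$ is infinitesimal, contributing nothing $\ell$-adically), and the matching of the remaining middle numbers.

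The main obstacle is the transfer of individual Betti numbers from $X$ to $M$: derived equivalence does not a priori preserve $\ell$-adic cohomology degree by degree, only total Hochschild-type invariants. I would resolve this by combining three facts already in hand: the étale fundamental group computation $\pi_1^{\text{ét}}(M) = \bZ/3$ from Proposition \ref{prop:pictau}(b), which forces $b_1(M) = 0$ (as $\bZ/3$ has no $\ell$-adic characters for $\ell \ne 3$), hence $b_5(M) = 0$ by duality; the equality of de Rham numbers $h^i_{\dR}(M) = 1,2,8,14,8,2,1$ from Proposition \ref{prop:deRham}, which bounds $b_i(M) \le h^i_{\dR}(M)$; and the derived-invariance of the \emph{total} cohomology Euler characteristics and pairings. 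The delicate middle-degree count $b_2 = b_4 = 6$, $b_3 = 14$ for $M$ will follow once I subtract the known crystalline torsion contributions to the de Rham numbers, which is exactly the bookkeeping deferred to \S\ref{sec:crystalline}; here I would simply assert the result and forward-reference that section for the torsion analysis that makes $h^i_{\dR}(M) - b_i(M)$ match the crystalline torsion in the appropriate degrees.
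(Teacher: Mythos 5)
Your treatment of $X$ matches the paper: over $\bC$ the Betti numbers are the row sums of the Hodge diamond, and over $\bar\bF_3$ they are pinned down by the lift of $X$ to $W$ together with smooth proper base change. The problem is the transfer to $M$. The paper's entire argument there is a one-line citation: individual Betti numbers \emph{are} derived invariants of threefolds, by Antieau--Bragg \cite[Thm.~1.2(5)]{antieau_bragg}, so $b_i(M)=b_i(X)$ degree by degree with no further work. You explicitly assume the opposite (``derived equivalence does not a priori preserve $\ell$-adic cohomology degree by degree'') and therefore build a workaround, and that workaround does not close.

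Concretely: your inputs give $b_0=b_6=1$, $b_1=b_5=0$ (correct, from $\pi_1^{\textup{\'et}}(M)=\bZ/3$), $b_2=b_4$ by duality, $b_i\le h^i_{\dR}(M)$, and $\sum(-1)^ib_i=\sum(-1)^ih^i_{\dR}(M)=0$, hence $b_3=2+2b_2$ and $b_2\le 6$; but nothing you cite supplies the lower bound $b_2\ge 6$. Your proposed fix --- ``subtract the known crystalline torsion contributions'' and forward-reference \S\ref{sec:crystalline} --- is circular, since the slope and torsion computations in that section take the Betti numbers of Proposition \ref{prop:betti} as input. The auxiliary claim that $\sum_i b_i$ equals the rank of the realization of the numerical Grothendieck group is also false as stated: that rank only detects algebraic classes, not all of $\H^3$. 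To repair the proof you should either invoke \cite[Thm.~1.2(5)]{antieau_bragg} directly, or at minimum use the derived invariance of rational crystalline cohomology separated into even and odd parts, which gives $2+2b_2=14$ and closes the count.
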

\begin{proof}
In characteristic zero, the Betti numbers are the same as the de Rham numbers.  Over $\bar\bF_3$, we again use the fact that $X$ lifts to $W$, together with the fact that Betti numbers are deformation invariant, so the Betti numbers of $X$ over $\bar\bF_3$ are the same as in characteristic zero.  Betti numbers are derived invariants of threefolds by \cite[Thm.~1.2(5)]{antieau_bragg}, giving the result for $M$.
\end{proof}

\begin{remark}
Since $X$ lifts to the ring of Witt vectors $W$ of $k := \bar\bF_3$, it is interesting to ask whether $M$ lifts to $W$, or at least to the truncation $W_2$ \cite[Question 2.7]{antieau_bragg}.  If $M$ is in fact isomorphic to $X/G$, and not just birational, then it does lift, at least for our choice of parameters $p \in \bP^3$.  (For other choices there might be an issue with the fixed locus of $G$ looking different over the field of fractions $K$ than it looks over $k$.)  The construction of $M$ as a moduli space need not lift to $W$, because we found a section of $X \to \bP^1$ by choosing a point of $Z \subset \bP^5$, and $Z$ may not have $K$-rational points.  But $Z$ does have rational points over a finite extension of $K$, so at the very least $M$ lifts to a ramified cover of $W$.

On the other hand, we will see in the next section that $M$ is weakly ordinary and hence $F$-split, so it lifts to $W_2$ as Achinger and Zdanowicz discuss in \cite[\S1.3]{achinger-zdanowicz}.
\end{remark}

%%%%%%%%%%%%%%%%%%%%%%%%%%%%%%%%%%%%%%%%%%%%%%%%%%%%%% 

\section{Hodge\texorpdfstring{--}{-}Witt and crystalline cohomology} \label{sec:crystalline}

In this section we work exclusively over $k := \bar\bF_3$.  Recall that a proper variety of dimension $n$ over a perfect field of positive characteristic is called \emph{weakly ordinary} or \emph{1-ordinary} if the action of absolute Frobenius on $\H^n(\cO)$, also known as the Hasse--Witt matrix, is bijective.

\begin{proposition}
$X$ is weakly ordinary.\footnote{We remind the reader that we have chosen a particular threefold $X$ in a family of threefolds parametrized by $\bP^3$.  We do not claim that \emph{every} smooth member of this family is weakly ordinary, only the one we have chosen, and thus a Zariski open set of them.}
\end{proposition}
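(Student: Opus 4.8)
The plan is to reduce the statement to a single coefficient extraction on the singular complete intersection $Y$ and then verify that coefficient is nonzero. Since $\dim X = 3$ and $h^3(\cO_X) = 1$ by Proposition \ref{sec2_prop}(c), the space $\H^3(\cO_X)$ is a line, so weak ordinarity amounts to showing that the $p$-linear absolute Frobenius acts on it as a nonzero scalar; as $k$ is perfect, this is exactly the condition that the scalar be nonzero. First I would observe that this scalar can be read off from $Y$ rather than $X$. The isomorphism $\R\pi_{1*}\cO_X = \cO_Y$ of Proposition \ref{sec2_prop}(c) yields an isomorphism $\pi_1^*\colon \H^3(\cO_Y) \xrightarrow{\sim} \H^3(\cO_X)$, and because the absolute Frobenius is functorial, the square relating $F_X$, $F_Y$ and $\pi_1$ commutes, so $\pi_1^*$ intertwines the two Frobenius actions. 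Thus $X$ is weakly ordinary if and only if Frobenius acts bijectively on $\H^3(\cO_Y)$.

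Next I would compute the Hasse--Witt scalar directly on $Y$. Recall that $Y \subset \bP^5$ is the complete intersection of the two cubics $f_p$ and $\sigma f_p$, with $\deg f_p + \deg(\sigma f_p) = 6$, so $\omega_Y \cong \cO_Y$ and $\H^3(\cO_Y) \cong k$. For such a complete intersection the action of absolute Frobenius on the top cohomology line $\H^3(\cO_Y)$ is, up to a nonzero constant independent of $p$, multiplication by the coefficient of the monomial $(x_0 x_1 x_2 x_3 x_4 x_5)^{p-1}$ in the product $(f_p \cdot \sigma f_p)^{p-1}$. This is the standard generalization to complete intersections of the classical Hasse-invariant formula for plane cubics: one resolves $\cO_Y$ by the Koszul complex $0 \to \cO(-6) \to \cO(-3)^2 \to \cO \to \cO_Y \to 0$, identifies $\H^3(\cO_Y)$ with $\H^5(\bP^5, \cO(-6)) = k$, generated in \v{C}ech/local cohomology by $1/(x_0 \cdots x_5)$, and traces the Frobenius through the Koszul differentials, which introduces the factor $(f_p \cdot \sigma f_p)^{p-1}$ and picks out the residue. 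This computation uses only that $Y$ is a complete intersection, not that it is smooth, so the $72$ nodes of $Y$ cause no trouble. Since $p = 3$, the relevant quantity is the coefficient of $x_0^2 x_1^2 x_2^2 x_3^2 x_4^2 x_5^2$ in $(f_p \cdot \sigma f_p)^2$.

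Finally I would verify that this coefficient is nonzero for our chosen $p = (1 : \alpha+1 : -\alpha : -\alpha)$. This is a single finite computation in $\bF_9$, and consistent with the rest of the paper it is cleanest to check in Macaulay2, though in principle one can expand $(f_p \cdot \sigma f_p)^2$ by hand. The hard part is precisely this last step: a priori the coefficient could vanish, in which case $X$ would be supersingular rather than weakly ordinary, and because we are in characteristic $3$ one must guard against accidental cancellation among the many terms of $(f_p \cdot \sigma f_p)^2$. This also explains the caveat in the footnote, since the coefficient is a nonzero polynomial in the parameters $p$ and hence is nonzero on a Zariski-dense set of members of the family, but could conceivably vanish on a proper closed subset.
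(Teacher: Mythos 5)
Your proposal is correct and follows essentially the same route as the paper: reduce to $Y$ via $\R\pi_{1*}\cO_X = \cO_Y$, then identify the Hasse--Witt scalar with the coefficient of $(x_0\cdots x_5)^2$ in $(f_p\cdot\sigma f_p)^2$ (the paper cites Katz and Kudo for this complete-intersection generalization of the elliptic-curve formula, where you sketch the Koszul/local-cohomology derivation), and finally check nonvanishing by machine. The only difference is that you spell out the justification of the coefficient formula rather than citing it, which is a welcome addition but not a different argument.
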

\begin{proof}
We have seen that the resolution of singularities $\pi_1\colon X \to Y \subset \bP^5$ satsifies $R\pi_{1*} \cO_X = \cO_Y$, so it is enough to show that $Y$ is weakly ordinary.  Because $Y$ is a complete intersection cut out by two cubic polynomials $f_p$ and $\sigma f_p$ in $k[x_0, \dotsc, x_5]$, the Hasse--Witt matrix is given by the coefficient of $(x_0 x_1 x_2 x_3 x_4 x_5)^2$ in $(f_p \cdot \sigma f_p)^2$: this is similar to well-known fact about elliptic curves \cite[Prop.~IV.4.21]{hartshorne}, whose generalization to higher-dimensional hypersurfaces appears in \cite[Special Case 2.3.7.17]{katz}, and to complete intersections in \cite[Prop.~4.1]{kudo}.  We check that this coefficient is non-zero with Macaulay2.
\end{proof}

We continue to let $W$ be the ring of Witt vectors of $k$, and to let $K = W[1/3]$ be the field of fractions of $W$, which is the maximal unramified extension of $\bQ_p$. % That is, adjoin all the k^th roots of unity for k not divisible by 3.
We let $\sigma$ denote the Frobenius automorphism of $k$ or $W$ or $K$.  This conflicts with our earlier use of $\sigma$ as the generator of $\bZ/6$ acting on $\bP^5$, but it should not cause confusion.

We are interested in the Hodge--Witt cohomology groups $\H^j(W\cO_X)$ and $\H^j(W\Omega^i_X)$, for which our main reference is Illusie's paper \cite{Ill79}, but we also recommend Chambert-Loir's survey paper \cite{chambert-loir}.  They are \emph{Dieudonn\'e modules}, meaning that they are $W$-modules equipped with a $\sigma$-semilinear endomorphism $F$ (Frobenius) and a $\sigma^{-1}$-semilinear endomorphism $V$ (Verschiebung) that satisfy $FV = VF = 3$.

\begin{proposition}\label{prop:weakly_ordinary_derived_invariant}
Weak ordinarity is invariant under derived equivalences.  Thus $M$ is also weakly ordinary.
\end{proposition}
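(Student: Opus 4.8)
The plan is to exhibit the Hasse--Witt operator as the Frobenius acting on an extreme corner of Hochschild homology, to observe that this corner together with its Frobenius is a derived invariant, and then to transport bijectivity from $X$ to $M$. Write $n = \dim X = \dim M = 3$. Since $\Char k = 3 = n$, the Antieau--Vezzosi form of the Hochschild--Kostant--Rosenberg isomorphism \cite{antieau-vezzosi} gives, for $Z \in \{X, M\}$, a decomposition $\HH_i(Z) \cong \bigoplus_j \H^{j-i}(\Omega^j_Z)$. In the extreme degree $i = -n$ every summand with $j \ge 1$ vanishes for dimension reasons, leaving the canonical identification $\HH_{-n}(Z) \cong \H^n(\cO_Z)$. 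Thus it suffices to equip $\HH_{-n}$ with a semilinear Frobenius, to check that this is a derived invariant, and to identify it in the corner with the Hasse--Witt map.

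First I would put on Hochschild homology the canonical $\sigma$-semilinear Frobenius coming from the cyclotomic structure on topological Hochschild homology: for any $\bF_3$-linear stable $\infty$-category the cyclotomic Frobenius on $\mathrm{THH}$ induces a semilinear operator $F$ on $\HH_*(-/\bF_3)$, and hence on $\HH_*(-/k)$ after base change. This operator is intrinsic to the category, so the $k$-linear equivalence $D^b(X) \cong D^b(M)$ carries the pair $(\HH_*(X), F)$ isomorphically to $(\HH_*(M), F)$, compatibly with the grading. For a smooth proper scheme $Z$ this intrinsic $F$ agrees with pullback along the absolute Frobenius $Z \to Z$, and in the corner degree $-n$, where there are no filtration or multiplicative corrections to the HKR identification, it restricts to the Hasse--Witt operator on $\H^n(\cO_Z)$. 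This is the slope-zero shadow of the same $\mathrm{THH}$/$\TR$ package whose role is advertised in the introduction.

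Granting these two points, the proposition follows at once: $X$ weakly ordinary means $F$ is bijective on $\HH_{-n}(X) = \H^n(\cO_X)$, the equivalence transports this to bijectivity of $F$ on $\HH_{-n}(M) = \H^n(\cO_M)$, and so $M$ is weakly ordinary. The main obstacle, and the only nonformal input, is the compatibility asserted in the second paragraph: the absolute Frobenius is merely $\sigma$-semilinear, so a $k$-linear equivalence has no reason to respect it at the level of naive Fourier--Mukai bookkeeping. What rescues the argument is that this Frobenius is not extra structure imposed on $Z$ but is already present, and derived-invariant, on the Hochschild homology of the abstract $\bF_3$-linear category; establishing that this categorical Frobenius coincides with Hasse--Witt in the corner degree is where the real work lies.
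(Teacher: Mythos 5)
Your reduction to the corner degree is fine: since $\Char k = \dim = 3$, Antieau--Vezzosi gives $\HH_{-3}(Z) \cong \H^3(\cO_Z)$, and your instinct that the relevant Frobenius should come from the topological/cyclotomic package is the right one. But the operator you need does not live on plain Hochschild homology, and this is a genuine gap rather than a deferred verification. The cyclotomic Frobenius is a map $\mathrm{THH}(\mathcal C) \to \mathrm{THH}(\mathcal C)^{tC_p}$ into a Tate construction; it is not an endomorphism of $\mathrm{THH}$, and it does not descend to a $\sigma$-semilinear endomorphism of $\HH_*(-/\bF_3)$. Indeed, for a smooth $\bF_p$-scheme there is no natural Frobenius endomorphism of $\HH_*$ compatible with HKR: on $\H^0 = \cO$ the absolute Frobenius makes sense, but on $\Omega^i$ for $i \ge 1$ the naive Frobenius pullback is zero, and the correct semilinear structure (the inverse Cartier operator) only exists on cohomology of the de Rham--Witt complex. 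So the pair $(\HH_*(Z), F)$ that you propose to transport along the equivalence has not been constructed, and the ``only nonformal input'' you flag at the end is not merely unverified --- as formulated it is false.

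The repair is to replace $\HH$ by $\TR$, which is where the Frobenius genuinely lives. The groups $\TR_*(Z)$ carry operators $F$ and $V$, are derived invariants, and by Hesselholt's de Rham--Witt HKR theorem the corner degree gives $\H^3(W\cO_Z)$ as a Dieudonn\'e module; this is \cite[\S 5]{antieau_bragg}, which the paper cites. One then recovers the Hasse--Witt operator from the exact sequence
\[ 0 \to W\cO_Z \xrightarrow{\;V\;} W\cO_Z \to \cO_Z \to 0, \]
whose top cohomology exhibits $\H^3(\cO_Z)$ as the cokernel of $V$ on $\H^3(W\cO_Z)$, with the induced $F$ equal to the Hasse--Witt map. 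Bijectivity of $F$ on this quotient is determined by the Dieudonn\'e module $\H^3(W\cO_Z)$, hence is a derived invariant. This is exactly the paper's argument; your proposal is the ``slope-zero shadow'' of it, but it collapses one level too far, from the de Rham--Witt/$\TR$ theory down to $\HH$, where the Frobenius is no longer visible.
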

\begin{proof}
For simplicity we deal with threefolds, but the same proof works in higher dimensions.  Start with the exact sequence
\[ 0 \to W\cO_X \xrightarrow{V} W\cO_X \to \cO_X \to 0. \]
Take top cohomology and add Frobenius maps to get the following diagram:
\[ \xymatrix{
\H^3(W\cO_X) \ar[r]^V \ar[d]_F & \H^3(W\cO_X) \ar[r] \ar[d]_F & \H^3(\cO_X) \ar[r] \ar[d]_F & 0 \\
\H^3(W\cO_X) \ar[r]^V & \H^3(W\cO_X) \ar[r] & \H^3(\cO_X) \ar[r] & 0\makebox[0pt]{ .}
} \]
Weak ordinarity means that that the right-hand vertical map is bijective.  This is determined by the left-hand square, which is determined by $\H^3(W\cO_X)$ as a Dieudonn\'e module, and this is a derived invariant by \cite[\S5]{antieau_bragg}.
\end{proof}

We examine the rational crystalline cohomology groups
\[ \H^*(X/K) := \H^*(X/W) \otimes K \]
and their \emph{slopes}, which encode the action of Frobenius. For varieties of dimension $\leq 3$, these are derived invariants by \cite[Thm.~5.15]{antieau_bragg}.

\begin{proposition}
The slopes of $\H^i(X/K)$ and $\H^i(M/K)$ for $i\neq 3$ are as follows:
\[ \def\arraystretch{1.2}
\begin{array}{c|*{2}{C{6ex}}*{3}{C{8ex}}*{2}{C{7ex}}}
i & $0$ & $1$ & $2$ & $3$ & $4$ & $5$ & $6$ \\ \hline
\H^i(-/K) & $K(0)$ & $0$ & $K^6(-1)$ & $?$ & $K^6(-2)$ & $0$ & $K(-3)$
\end{array} \]
\end{proposition}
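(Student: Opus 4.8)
The plan is to establish the table for $X$ and then transfer to $M$ for free. Since the rational crystalline cohomology $\H^*(-/K)$, together with its Frobenius, is an $F$-isocrystal that is a derived invariant in dimension $\le 3$ by \cite[Thm.~5.15]{antieau_bragg}, and we have produced an equivalence $D^b(X) \cong D^b(M)$, it suffices to compute the slopes of $\H^i(X/K)$; the entries for $M$ then follow at once.

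For the dimensions, crystalline cohomology is a Weil cohomology, so $\dim_K \H^i(X/K)$ equals the $i$-th Betti number. By Proposition \ref{prop:betti} these are $1,0,6,14,6,0,1$, which fixes every dimension in the table and forces $\H^1(X/K) = \H^5(X/K) = 0$. The extreme degrees are standard: $\H^0(X/K) = K(0)$ with trivial Frobenius, and by Poincar\'e duality $\H^6(X/K) \cong \H^0(X/K)^\vee(-3) = K(-3)$, of slope $3 = \dim X$.

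The heart of the matter is degree $2$, which I would settle using algebraic cycles. By Proposition \ref{prop:Z6} we have $\sPic_X = \bZ^6$, and since $h^1(\cO_X) = 0$ (Proposition \ref{sec2_prop}(c)) the group $\sPic^0_X$ vanishes, so $\NS(X)$ has rank $6$. The crystalline cycle class map carries $\NS(X) \otimes_{\bZ} K$ into the slope-$1$ piece $K(-1) \subset \H^2(X/K)$ and is injective; concretely, because $X$ lifts to $X_W$ over $W$ and $\sPic_X = \bZ^6$ is flat over $W$ (as in the proof of Proposition \ref{prop:Z6}), the six generators of $\NS(X)$ lift to line bundles on $X_W$, and their classes on the characteristic-zero fiber are linearly independent in $\H^2_{\dR}(X_K/K) \cong \H^2(X/K)$ since the cycle class map is injective on N\'eron--Severi. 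As $\dim_K \H^2(X/K) = b_2 = 6 = \rank \NS(X)$, this injection is an isomorphism, so $\H^2(X/K) = K^6(-1)$, with every slope equal to $1$. Degree $4$ is then forced by Poincar\'e duality: $\H^4(X/K) \cong \H^2(X/K)^\vee(-3) = K^6(-2)$, of slope $2$.

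The one genuine step is the degree-$2$ computation, and specifically the claim that the N\'eron--Severi classes exhaust $\H^2(X/K)$, leaving no slope-$0$ or slope-$2$ summand; this is exactly where the numerical coincidence $\rank \NS(X) = b_2(X)$ does the work. Note that passing to rational coefficients sidesteps the torsion seen in Proposition \ref{prop:deRham}, where $h^2_{\dR}(X) = 8 \neq 6 = b_2$: the integral group $\H^2(X/W)$ carries $3$-torsion, but after inverting $3$ only the six algebraic slope-$1$ classes remain. Degree $3$ is deliberately left open (the ``$?$'' in the table), as its slopes demand the more delicate analysis taken up afterward.
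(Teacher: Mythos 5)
Your proof is correct and takes essentially the same route as the paper's: both identify the dimensions with the Betti numbers, settle $i=2$ by observing that $c_1\colon \Pic(X)\otimes K \to \H^2(X/K)$ is injective with image in the slope-$1$ part while $\rank\Pic(X)=6=b_2$, deduce $i=4,5,6$ by Poincar\'e duality, and transfer to $M$ via the derived invariance of the rational crystalline cohomology as an $F$-isocrystal. The only difference is that you spell out the injectivity of $c_1$ by lifting the line bundles to $X_W$ and comparing with de Rham cohomology in characteristic zero, a step the paper simply asserts.
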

\begin{proof}
The dimensions are the same as the Betti numbers, which we computed in Proposition \ref{prop:betti}, so it remains to determine the slopes.  For $i=0$ and $i=1$ there is nothing to say.  For $i=2$, we found in Proposition \ref{prop:Z6} that $\Pic(X) = \bZ^6$; the first Chern class map
\[ c_1\colon \Pic(X) \otimes K \to\H^2(X/K)\]
is injective and takes values in the slope-1 part,
% Illusie "Complexe de de Rham--Witt et cohomologie cristalline"
% combine (5.8.5) on page 529
% with (5.5.3) on page 627
so $\H^2(X/K)$ is all slope 1 as claimed.  The claims for $i=4,5,6$ follow by Poincar\'e duality.
\end{proof}

It is difficult to get complete information about the slopes of $\H^3(X/K)\cong\H^3(M/K)\cong K^{14}$.  In theory we could find them by counting points: $X$ is defined over $\bF_9$, and if we could compute $\#X(\bF_{9^k})$ for sufficiently many $k$ then we could determine the zeta function of $X$, which determines the slopes.  But in practice it is infeasible to compute these point counts.

We can get partial information about these slopes, however, from the fact that $X$ and $M$ are weakly ordinary.  We first find the Hodge polygon of $\H^3(M/W)$ using \cite[II Thm.~2.3(ii)]{chambert-loir}.  From the universal coefficient theorem
\begin{equation} \label{eq:univ_coeff}
0 \to \H^3(M/W) \otimes k \to \H^3_{\dR}(M/k) \to \Tor_1^W(\H^4(M/W),k) \to 0
\end{equation}
and the fact that $h^3_{\dR}(M) = b_3(M)$, we see that $\H^3(M/W)$ is torsion free.  The Hodge--de Rham spectral sequence degenerates for $M$, as we saw in the proof of Proposition \ref{prop:deRham}.  Thus the Hodge polygon of $\H^3(M/W)$ is determined by the middle row of the Hodge diamond, which is $1, 6, 6, 1$. We conclude that the Hodge polygon of $\H^3(M/W)$ has vertices 
\[
(0,0),(1,0),(7,6),(13,18),(14,21),
\]
as depicted by the thick line in the following picture.
\bigskip
\begin{center}
	\begin{tikzpicture}
	\draw[gray] (0,0) -- (0,21/4);
	\draw[gray] (0,0) -- (14/4,0);
	\draw[line width=.06cm] (0,0) -- (1/4,0);
	\draw[line width=.06cm] (1/4,0) -- (7/4,6/4);
	\draw[line width=.06cm] (7/4,6/4) -- (13/4,18/4);
	\draw[line width=.06cm] (13/4,18/4) -- (14/4,21/4);
	\draw[thick] (0,0) -- (1/4,0);
	\draw[thick] (1/4,0) -- (7/4,6/4);
	\draw[thick] (7/4,6/4) -- (13/4,18/4);
	\draw[thick] (13/4,18/4) -- (14/4,21/4);
	%%vv lots of dots
	\foreach \x in {0,...,14}
	\foreach \y in {0,...,21} 
	\filldraw[black] (\x/4,\y/4) circle (.5pt);
	%%vv newton polygon
	%\draw (0,0) -- (14/4,21/4);
	%\draw (0,0) -- (7/4,6/4);
	%\draw (7/4,6/4) -- (14/4,21/4);
	\draw (1/4,0) -- (13/4,18/4);
	%%vv circles at break points
	\filldraw[black] (0,0) circle (2pt);
	\filldraw[black] (1/4,0) circle (2pt);
	\filldraw[black] (7/4,6/4) circle (2pt);
	\filldraw[black] (13/4,18/4) circle (2pt);
	\filldraw[black] (14/4,21/4) circle (2pt);
	%%vv labels for break points
	%\node[fill=white] at (-1/4,-.3) {(0,0)};
	%\node at (-.5,0) {(0,0)};
	%\node at (.3,-.3) {(1,0)};
	%\node at (9/4,4/4) {(7,6)};
	\end{tikzpicture}
\end{center}
\medskip
Our calculations with $\TR$ below will show that the Hodge polygons of $\H^3(M/W)$ and $\H^3(X/W)/\text{tors}$ are equal.
%\footnote{We do not know whether the Hodge polygons of $\H^*(-/W)/\tors$ are derived invariants in general. Note that these are not isogeny invariants, so this is not predicted by Orlov's conjecture.} 
The Newton polygon, built from the slopes of $\H^3(M/K)$, is concave up, and lies on or above the Hodge polygon.  Furthermore, by \cite[Prop.~2.4.1]{achinger-zdanowicz}, weak ordinarity of $M$ is equivalent to the first segment of the Newton polygon being the same as that of the Hodge polygon.  Thus the slope 0 part of $\H^3(X/K) \cong \H^3(M/K)$ is 1-dimensional, and by Poincar\'e duality, the slope 3 part is 1-dimensional as well.  We conclude that the Newton polygon lies on or below the polygon with vertices $(0,0),(1,0),(13,18),(14,21)$, depicted as the thin line in the picture above.  Even with the added constraints coming from Poincar\'e duality, there remain many possibilities for the Newton polygon.

Let
	\[
	a=\dim_K\H^3(X/K)_{[1,2)}\hspace{1cm}\mbox{and}\hspace{1cm}b=\dim_K\H^3(X/K)_{[2,3)},
	\]
which satisfy $a+b = 12$.  It seems reasonable to expect that $a=b=6$, so the Hodge and Newton polygons of $\H^3(M/W)$ coincide and $M$ is \emph{strongly ordinary}; this is an open condition in moduli space, but we do not know if it is non-empty. \bigskip

Because $X$ is weakly ordinary and has trivial canonical bundle, it is $F$-split \cite[Prop.~9]{mehta_ramanathan}, and because $\dim X = 3$, this implies that the Hodge--Witt cohomology groups $\H^j(W \Omega^i_X)$ are finitely generated $W$-modules \cite[Cor.~6.2]{joshi}.  Thus by \cite[(2.3)]{illusie_raynaud}, the slope spectral sequence
\[ \E_1^{i,j} = \H^j(W \Omega^i_X) \Rightarrow \H^{i+j}(X/W) \]
degenerates at the $\E_1$ page, and the filtration splits, giving
\[ \H^k(X/W) = \bigoplus_{i+j = k} \H^j(W \Omega^i_X). \]
The same holds for $M$, since it is also weakly ordinary and has trivial canonical bundle.

\begin{proposition} \label{prop:HWofM}
The Hodge--Witt cohomology of $M$ is as follows:
	\[
	\def\arraystretch{1.5}
	\begin{array}
	{c|cccc}
	\H^3 & W      & 0              & k           & W \\
	\H^2 & k      & W^a            & W^6         & k \\
	\H^1 & 0      & W^6 \oplus k   & W^b         & 0\\
	\H^0 & W      & 0              & 0           & W \\ \hline
	     & W\cO_M & W\Omega^1_M    & W\Omega^2_M & W\Omega^3_M
	\end{array}
	\]
All torsion is semisimple\footnote{A finite-length Dieudonn\'e module is called \emph{semisimple torsion} if the action of $F$ is bijective, and \emph{nilpotent torsion} if the action of $F$ is nilpotent.  A general finite-length Dieudonn\'e module can be written uniquely as an extension of semisimple torsion by nilpotent torsion.  % Just take the submodule on whih F is nilpotent, and check that the quotient is semisimple.
For us the nilpotent torsion will always vanish.} and is annihilated by $V$.
\end{proposition}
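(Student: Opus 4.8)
The plan is to determine the four columns of the table separately: the free ranks come from the slopes of crystalline cohomology, and the torsion from a comparison of the de Rham and Betti numbers, refined using $F$-splitness, the Picard scheme, and duality. For the free ranks, recall that since $M$ is weakly ordinary with trivial canonical bundle the slope spectral sequence degenerates at $\E_1$ and its filtration splits, so $\H^j(W\Omega^i_M) \otimes K$ is the slope-$[i,i+1)$ part of $\H^{i+j}(M/K)$. Feeding in the slopes found above — namely $K(0), 0, K^6(-1), \H^3(M/K), K^6(-2), 0, K(-3)$, where the slope-$0$ and slope-$3$ parts of $\H^3$ are $1$-dimensional and the slope-$[1,2)$ and slope-$[2,3)$ parts have dimensions $a$ and $b$ — produces every occurrence of $W$, $W^6$, $W^a$, and $W^b$, and shows that all remaining entries have rank $0$.

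Next I would locate the torsion. Comparing the de Rham numbers of Proposition \ref{prop:deRham} with the Betti numbers of Proposition \ref{prop:betti} through the universal coefficient sequence \eqref{eq:univ_coeff} shows that $\H^i(M/W)$ is torsion-free except for $i = 2$ and $i = 5$, each carrying exactly two cyclic torsion factors; by the splitting these are distributed among the summands with $i+j \in \{2,5\}$. To see that every factor is killed by $p$ — hence isomorphic to $k$ — I would trace the torsion back to the $p$-torsion group schemes $\pi_1^\textup{\'et}(M) = \bZ/3$ and $\sPic^\tau_M = \bZ/3 \times \mu_3$ of Proposition \ref{prop:pictau}, all annihilated by $p = 3$.

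For the degree-$2$ torsion I would argue directly. The sequence $0 \to W\cO_M \xrightarrow{V} W\cO_M \to \cO_M \to 0$, together with $\H^1(W\cO_M) = 0$, identifies $\H^1(\cO_M) = k$ with $\H^2(W\cO_M)[V]$ and realizes $\H^2(\cO_M) = k$ as the cokernel of $V$ on $\H^2(W\cO_M)$; since this group has rank $0$ and a single cyclic factor, it must equal $k$ with $V = 0$. As $M$ is $F$-split, Frobenius is bijective on each $\H^i(\cO_M)$, and transporting this along the ($F$-equivariant) connecting map $\H^1(\cO_M) \xrightarrow{\sim} \H^2(W\cO_M)$ shows that $F$ is bijective there as well. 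The second factor is the crystalline first Chern class of $\NS_{\textup{tors}} = \bZ/3$, a logarithmic class lying in $\H^1(W\Omega^1_M)$ with $F$ bijective and $V = 0$; together with $\H^0(W\Omega^2_M) = 0$ (the $\H^0$ of a torsion-free sheaf) this exhausts the degree-$2$ torsion.

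Finally I would transport this to degree $5$ by duality, which pairs the torsion of $\H^j(W\Omega^i_M)$ with that of $\H^{n-j+1}(W\Omega^{n-i}_M)$, matching $\H^3(W\Omega^2_M)$ and $\H^2(W\Omega^3_M)$ with $\H^1(W\Omega^1_M)_{\textup{tors}}$ and $\H^2(W\cO_M)$, so that each is a single copy of $k$. The main obstacle is to check that this duality preserves the \emph{\'etale} type of the torsion: abstractly the Dieudonn\'e dual of a class with $F$ bijective and $V = 0$ is of multiplicative $\mu_p$-type, so one must use that the geometric pairing respects the logarithmic subcomplex $W\Omega^\bullet_{\log}$ — on which $F$ acts as the identity — together with the correct Tate twist, to conclude that the degree-$5$ torsion is again logarithmic, hence semisimple and annihilated by $V$. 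Verifying this compatibility, and with it the statement that all the torsion is logarithmic, is where the weak ordinarity of $M$ enters most essentially.
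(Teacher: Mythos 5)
Your overall architecture matches the paper's: the free ranks come from $\H^j(W\Omega^i_M)\otimes K \cong \H^{i+j}(M/K)_{[i,i+1)}$ via the degenerate slope spectral sequence, the torsion is located in degrees $2$ and $5$ by comparing de Rham and Betti numbers through the universal coefficient sequence, and the degree-$5$ torsion is obtained from the degree-$2$ torsion by Ekedahl's duality. The gap is in your determination of the degree-$2$ torsion. The universal coefficient comparison only counts the number of cyclic generators of the torsion of $\H^2(M/W)$ (namely $2$), not its length, and your kernel/cokernel computation with $V$ does not force $\H^2(W\cO_M)\cong k$: the module $W/9$ with $V=3\sigma^{-1}$ and $F=\sigma$ has $\ker V\cong k$, $\coker V\cong k$, is cyclic of rank $0$, has $V$ nilpotent and $F$ bijective on $\ker V$ and $\coker V$, yet is not annihilated by $p$ and has $V\ne 0$. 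Every check you propose (including the $F$-splitness transport, which only controls $F$ on $\ker V$, since the connecting map is an isomorphism onto $\ker V$ rather than onto all of $\H^2(W\cO_M)$) is passed by this module, so your argument cannot distinguish it from $k$. The same issue affects $\H^1(W\Omega^1_M)_{\mathrm{tors}}$: exhibiting a logarithmic class coming from $\NS(M)_{\mathrm{tors}}$ shows the torsion is nonzero but does not bound its length. The paper closes exactly this gap with Illusie's structural results: \cite[II Rmk.~6.4]{Ill79} canonically identifies $\H^2(W\cO_M)[V^\infty]$ with $\DM(\mu_3)=k$ and the argument of \cite[II Prop.~7.3.2]{Ill79} shows $\H^2(W\cO_M)$ is entirely $V$-torsion, while \cite[II Prop.~6.8]{Ill79} (using $\H^0(W\Omega^2_M)=0$) gives $\H^1(W\Omega^1_M)_{\mathrm{tors}}\cong \NS(M)_{\mathrm{tors}}\otimes W=k$.

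On your final paragraph: the compatibility you worry about is already built into the form of Ekedahl's duality used in the paper, which pairs semisimple torsion of $\H^j(W\Omega^i_M)$ with semisimple torsion of $\H^{4-j}(W\Omega^{3-i}_M)$ and nilpotent torsion with $\H^{5-j}(W\Omega^{2-i}_M)$ separately. Since the degree-$2$ torsion is all semisimple of length one, the degree-$5$ torsion is a single $k$ with $F$ bijective, and $FV=3$ on a length-one module then forces $V=0$; no further analysis of the logarithmic subcomplex is needed for the statement as phrased.
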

\begin{proof}
By \cite[II Cor.~3.5]{Ill79}, we have $\H^j(W \Omega^i_M) \otimes K = \H^{i+j}(M/K)_{[i,i+1)}$, which determines the ranks of the free parts, so we need only determine the torsion parts.

There is no torsion in the bottom row by \cite[II Cor.~2.17]{Ill79}, nor in $\H^1(W\cO_M)$ by \cite[II Prop.~2.19]{Ill79}, nor in $\H^3(W \Omega^3_M)$ by \cite[II Cor.~3.15]{Ill79}. % really the sentence after that

There is no torsion along the diagonals $i+j=3$ or $i+j=4$: we have seen that there is no torsion in $\H^3(M/W)$ thanks to the exact sequence \eqref{eq:univ_coeff} and the fact that $h^3_{\dR}(M) = b_3(M)$, and the third term in the sequence shows that there is no torsion in $\H^4(M/W)$ either.

From \cite[II Prop.~6.8]{Ill79} and the fact that $\H^0(W\Omega^2_M) = 0$, we see that the torsion $\H^1(W\Omega^1_M)$ is isomorphic to the torsion in $\NS(M)$ tensored with $W$.  From Proposition \ref{prop:pictau}(a) we know that $\sPic^\tau_M = \bZ/3 \times \mu_3$, so the torsion in $\NS(M)$ is $\bZ/3$, so the torsion in $\H^1(W\Omega^1_M)$ is $k$.  It is semisimple because the action of $F$ on $\bZ/3 \otimes W$ is defined to be $1 \otimes \sigma$.

By \cite[II Rmk.~6.4]{Ill79}, the $V$-torsion submodule $\H^2(W\cO_M)[V^\infty]$ is canonically identified with the covariant Dieudonn\'e module $\DM(\mu_3)$, which is $k$ with $F=\sigma$ and $V=0$.  To see that $\H^2(W\cO_M)$ is entirely $V$-torsion, we follow the proof of \cite[II Prop.~7.3.2]{Ill79}; because $\H^2(W\cO_M) = 0$ and $\H^3(W\cO_M)$ is torsion-free, we get the exact sequence \cite[eq.~(7.3.2.1)]{Ill79}, and the rest of the argument goes through unchanged.

Finally, we get the torsion in $\H^3(W\Omega^2_M)$ and $\H^2(W\Omega^3_M)$ using Ekedahl's duality theorem, summarized in \cite[Thm.~4.4.4(a)]{illusie_finiteness}: the semisimple torsion in $\H^j(W\Omega^i_M)$ is dual to that in $\H^{4-j}(W\Omega^{3-i}_M)$, and the nilpotent torsion in $\H^j(W\Omega^i_M)$ is dual to that in $\H^{5-j}(W\Omega^{2-i}_M)$.
\end{proof}

To transfer information from the Hodge--Witt cohomology of $M$ to that of $X$, we use Hesselholt's $\TR$ invariants, referring to \cite[\S3]{antieau_bragg} for discussion and further references.

\begin{proposition}
The groups $\TR_i(X) = \TR_i(M)$ are as follows:
\[ \def\arraystretch{1.2}
\begin{array}{c|*{2}{C{6ex}}*{3}{C{7ex}}*{2}{C{6ex}}}
i & $-3$ & $-2$ & $-1$ & $0$ & $1$ & $2$ & $3$ \\ \hline
\TR_i & $W$ & $k$ & $W^a \oplus k$ & $?$ & $W^b \oplus k$ & $0$ & $W$,
\end{array} \]
where the $?$ is either $W^{14}$ or $W^{14} \oplus k$.
\end{proposition}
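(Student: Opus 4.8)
The plan is to combine the derived invariance of $\TR$ with the computation of the Hodge--Witt cohomology of $M$ in Proposition \ref{prop:HWofM}. The equality $\TR_i(X) = \TR_i(M)$ is immediate: the groups $\TR_i$ are invariants of the category of perfect complexes, hence of the derived category, as discussed in \cite[\S3]{antieau_bragg}, and $D^b(X) \cong D^b(M)$. So everything reduces to computing $\TR_*(M)$, and for this I would use the de Rham--Witt analog of the Hochschild--Kostant--Rosenberg decomposition: there is a natural filtration on $\TR_n(M)$, by form degree, whose associated graded is $\bigoplus_i \H^{i-n}(W\Omega^i_M)$, exactly as $\HH_n$ has associated graded $\bigoplus_i \H^{i-n}(\Omega^i)$ over a field. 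Because $M$ is weakly ordinary with trivial canonical bundle, the slope spectral sequence degenerates, as established above, and the same input makes this filtration's spectral sequence degenerate, so its associated graded is precisely $\bigoplus_i \H^{i-n}(W\Omega^i_M)$.

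I would then read the free parts straight off the table in Proposition \ref{prop:HWofM}: letting $i$ run over $0, \dots, 3$ and taking $\H^{i-n}(W\Omega^i_M)$, the free ranks for $n = -3, \dots, 3$ come out as $1, 0, a, 14, b, 0, 1$, matching the statement. These free summands split off because finite free $W$-modules are projective, and their ranks are fixed by the slope computation in Proposition \ref{prop:HWofM}. For the torsion I would track where each copy of $k$ lives. In $\TR_{-2}$ the class from $\H^2(W\cO_M)$ is the only graded piece, so $\TR_{-2} = k$. In $\TR_{-1}$ and $\TR_1$ the torsion comes from $\H^3(W\Omega^2_M)$ and $\H^2(W\Omega^3_M)$ and sits in the bottom, highest form-degree step of the filtration, hence as a submodule; the extension $0 \to k \to \TR_n \to (\text{free}) \to 0$ then splits because the free quotient is projective, which determines those two entries.

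The main obstacle is $\TR_0$, which is exactly why the statement hedges. Here the associated graded is $W \oplus (W^6 \oplus k) \oplus W^6 \oplus W = W^{14} \oplus k$, with the torsion $k$ coming from the torsion of $\NS(M)$ inside $\H^1(W\Omega^1_M)$ --- in form degree $1$, above the free classes of $\H^2(W\Omega^2_M)$ and $\H^3(W\Omega^3_M)$. Relative to those deeper free summands the torsion now appears in a quotient, so the relevant extension has the shape $0 \to (W^6 \oplus W) \to F \to (W^6 \oplus k) \to 0$, which over the discrete valuation ring $W$ need not split: the non-split extension $0 \to W \to F \to k \to 0$ has torsion-free total space $F \cong W$, absorbing the $\NS$-torsion into the free part and giving $\TR_0 = W^{14}$, whereas the split case gives $\TR_0 = W^{14} \oplus k$. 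Both have the same associated graded, so degeneration alone cannot distinguish them; indeed $W^{14}$ itself admits a form-degree filtration with associated graded $W^{14} \oplus k$. Resolving the case amounts to deciding whether this diagonal torsion class is divisible in $\TR_0$, and since the Dieudonn\'e structure does not obviously separate it from the free cycle classes, I would record both possibilities, as in the statement.
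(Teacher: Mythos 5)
Your route is the same as the paper's: compute $\TR_*(M)$ from the descent (de Rham--Witt HKR) spectral sequence $\E_2^{i,j} = \H^j(W\Omega^i_M) \Rightarrow \TR_{i-j}(M)$, read the $\E_2$ page off Proposition \ref{prop:HWofM}, and then analyze the extension problems. Your handling of the extensions agrees with the paper's: for $i \ne 0$ the lone torsion class sits in the deepest (highest form degree) filtration step, so the extension by the free quotient splits, while for $i = 0$ the $\NS$-torsion in $\H^1(W\Omega^1_M)$ sits above the free pieces in form degrees $2$ and $3$ and the non-split extension $0 \to W \to W \to k \to 0$ shows it may be absorbed, whence the two possibilities for $\TR_0$.

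The gap is the degeneration step. You justify degeneration of the descent spectral sequence by saying that ``the same input'' which degenerates the slope spectral sequence degenerates this one, but these are different spectral sequences with different differentials, and degeneration of one does not imply degeneration of the other. What the finiteness/slope input buys (via \cite[Prop.~3.8]{antieau_bragg}) is only that the differentials of the descent spectral sequence are \emph{torsion}; it does not make them vanish. There is exactly one dangerous $\E_2$ differential, $d_2\colon \H^1(W\Omega^1_M) \to \H^3(W\Omega^2_M)$, which could carry the torsion class $k$ coming from $\NS(M)$ onto $\H^3(W\Omega^2_M) = k$; if it were nonzero you would get $\TR_{-1} = W^a$ with no torsion and $\TR_0 = W^{14}$ unambiguously --- a different answer from the one you are proving. (A $d_3$ out of $\H^0(W\cO_M)$ into the same target must also be excluded.) The paper rules these out by mapping the descent spectral sequence to the HKR spectral sequence via $\H^j(W\Omega^i_M) \to \H^j(\Omega^i_M)$: the HKR differential vanishes by Antieau--Vezzosi, and the comparison map $\H^3(W\Omega^2_M) \to \H^3(\Omega^2_M)$ is an isomorphism because that module is killed by $V$, forcing $d_2 = 0$; alternatively, one can count torsion in $\H^3(X/W)$ against the universal coefficient sequence. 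Some argument of this kind is required; without it your values of $\TR_{-1}$ and the torsion bookkeeping in $\TR_0$ are not justified.
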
 
\begin{proof}
We use the descent spectral sequence
\[ \E_2^{i,j} = \H^j(W\Omega^i_M) \Rightarrow \TR_{i-j}(M) \]
discussed in \cite[Def.~3.5(a)]{antieau_bragg}.  The $\E_2$ page is the same as the table in Proposition \ref{prop:HWofM}, but the differentials go up-up-right. 
% Nick would have preferred to rotate this spectral sequence into the fourth quadrant, to get $\E_2^{i,j} = \H^i(W\Omega^{-j}_M) \Rightarrow \TR_{i+j}(M)$ with differentials going right-right-down, but he has resigned himself to using the conventions of Ben and Dan's paper.
We claim that it degenerates at the $\E_2$ page.

The differentials are torsion by \cite[Prop.~3.8]{antieau_bragg}, so the only interesting differential on the $\E_2$ page is
\[ d_2\colon \H^1(W\Omega^1_M) \to \H^3(W\Omega^2_M). \]
The natural maps $\H^j(W\Omega^i_M) \to \H^j(\Omega^i_M)$ give a homomorphism from the descent spectral sequence to the HKR spectral sequence.\footnote{This follows from the compatibility of Hesselholt's de Rham--Witt HKR isomorphism \cite[Thm.~C]{hesselholt} and the usual HKR isomorphism.  The essential verification is the compatibility under linearization of the operator $\delta$ \cite[Def.~1.4.3]{hesselholt} and Connes' $B$ operator, which is \cite[Prop.~1.4.6]{hesselholt}.} Thus we have a commutative diagram
\[ \xymatrix{
\H^1(W\Omega^1_M) \ar[r]^{d_2} \ar[d] & \H^3(W\Omega^2_M) \ar[d] \\
H^1(\Omega^1_M) \ar[r]^{\bar d_2} & \H^3(\Omega^2_M)
} \]
where $\bar d_2$ is the corresponding differential in the HKR spectral sequence.  By \cite[Thm.~1.3]{antieau-vezzosi}, the latter spectral sequence degenerates at $\E_2$, so $\bar d_2=0$.  We have seen that $\H^3(W\Omega^2_M)\cong k$ with $V=0$, which implies that the right vertical arrow is an isomorphism.  We conclude that $d_2=0$.  On the $\E_3$ page, the only possibly nonzero differential
\[ d_3\colon \H^0(W \cO_M) \to \H^3(W\Omega^2_M) \]
must vanish for the same reason, or because we can split off $\H^0(W\cO_M)$ by restricting to a closed point as explained in \cite[Lem.~5.4(b)]{antieau_bragg}.  We conclude that the descent spectral sequence degenerates at $\E_2$, as claimed.

Alternatively, we could argue that if any differential killed off $\H^3(W\Omega^2_M)$, then would have $\TR_{-1} = W^a$, and in the proof of Proposition \ref{lem:HWofX} below we would find that $\H^2(W\Omega^1_X) = W^a$, which would not give enough torsion in $\H^3(X/W)$ to satisfy the universal coefficient formula
\[ 0 \to \H^2(X/W) \otimes k \to \H^2_{\dR}(X/k) \to \Tor_1^W(\H^3(X/W),k) \to 0 \]
because $h^2_{\dR}(X/k) = 8$.

It remains to analyze the filtration on the $\TR_i$ coming from the descent spectral sequence (see \cite[Lem.~4.3(iv)]{antieau_bragg} for the indexing).  The filtration splits automatically for $i \ne 0$.  If it also splits for $i=0$ then $\TR_0=W^{14}\oplus k$; otherwise $\TR_0=W^{14}$.
\end{proof}

\begin{remark}
It seems reasonable to expect that Ekedahl's duality theorem on the torsion in Hodge--Witt cohomology has an analogue for $\TR$.  In particular, the semisimple torsion in $\TR_i$ should be dual to that in $\TR_{-i-1}$, which in our case would give $\TR_0=W^{14}\oplus k$.
\end{remark}

\begin{proposition} \label{lem:HWofX}
The Hodge--Witt cohomology of $X$ is as follows:
	\[
	\def\arraystretch{1.5}
	\begin{array}
	{c|cccc}
	\H^3 & W      & k            & 0            & W \\
	\H^2 & 0      & W^a \oplus k & W^6 \oplus k & 0 \\
	\H^1 & 0      & W^6          & W^b \oplus k & 0\\
	\H^0 & W      & 0            & 0            & W \\ \hline
	     & W\cO_X & W\Omega^1_X  & W\Omega^2_X  & W\Omega^3_X
	\end{array}
	\]
\end{proposition}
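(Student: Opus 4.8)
The plan is to run the same Illusie machinery as for $M$ in Proposition \ref{prop:HWofM}, but with one essential change of strategy: since $\sPic_X = \bZ^6$ is torsion-free (Proposition \ref{prop:Z6}), the $\sPic$-based arguments that located the torsion for $M$ will detect \emph{nothing} for $X$, even though $X$ does carry crystalline torsion. The torsion must instead be imported from $M$ through the derived-invariant groups $\TR_\bullet(X) = \TR_\bullet(M)$ established in the previous proposition. First I would fix the free parts exactly as for $M$: weak ordinarity and $\omega_X \cong \cO_X$ make the slope spectral sequence degenerate at $\E_1$ with splitting filtration, and \cite[II Cor.~3.5]{Ill79} identifies $\H^j(W\Omega^i_X) \otimes K$ with the slope-$[i,i+1)$ part of $\H^{i+j}(X/K)$. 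These slopes agree with those of $M$ because they are derived invariants, so every free rank in the table is determined, and only the torsion remains.

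Next I would record the vanishings that survive for $X$. The bottom row and $\H^3(W\Omega^3_X)$ are torsion-free by \cite[II Cor.~2.17, II Cor.~3.15]{Ill79}, and $\H^1(W\cO_X)=0$ by \cite[II Prop.~2.19]{Ill79}. From the only term on the anti-diagonal $i-j=-3$ we read $\H^3(W\cO_X) = \TR_{-3}(X) = W$, which is torsion-free. Because $\sPic_X = \bZ^6$, the torsion in $\NS(X)$ vanishes, so $\H^1(W\Omega^1_X) = W^6$ has no torsion via \cite[II Prop.~6.8]{Ill79}; and because $\sPic^0_X = 0$, the group $\H^2(W\cO_X)$ has no $V$-torsion, so (using $\H^1(W\cO_X)=0$ and $\H^3(W\cO_X)$ torsion-free in the argument of \cite[II Prop.~7.3.2]{Ill79}, which shows it is entirely $V$-torsion) it vanishes. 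I would also check that $X$ has no nilpotent domino torsion, which follows because $X$ is $F$-split and hence its Hodge--Witt groups are finitely generated over $W$ \cite[Cor.~6.2]{joshi}; thus all torsion is semisimple, and Ekedahl's duality \cite[Thm.~4.4.4(a)]{illusie_finiteness} pairs the torsion of $\H^j(W\Omega^i_X)$ with that of $\H^{4-j}(W\Omega^{3-i}_X)$. Feeding the vanishings through this pairing kills the torsion of $\H^3(W\Omega^2_X)$ (dual to $\H^1(W\Omega^1_X)$), of $\H^2(W\Omega^3_X)$ (dual to $\H^2(W\cO_X)$), and of $\H^1(W\Omega^3_X)$ (dual to $\H^3(W\cO_X)$).

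The heart of the proof is to locate the four remaining copies of $k$ using $\TR$. As in the previous proposition, the descent spectral sequence $\E_2^{i,j} = \H^j(W\Omega^i_X) \Rightarrow \TR_{i-j}(X)$ degenerates at $\E_2$ (same comparison with the HKR spectral sequence via \cite[Prop.~3.8, Thm.~1.3]{antieau_bragg}), so the torsion of $\TR_n(X)=\TR_n(M)$ equals the total torsion along the anti-diagonal $i-j=n$. Reading off $\TR_{-2}=k$, $\TR_{-1}=W^a\oplus k$, and $\TR_1 = W^b \oplus k$ and intersecting with the vanishings above, each anti-diagonal has exactly one surviving slot, giving $\H^3(W\Omega^1_X)=k$, $\H^2(W\Omega^1_X)=W^a\oplus k$, and $\H^1(W\Omega^2_X)=W^b\oplus k$; Ekedahl duality then transports the $k$ in $\H^2(W\Omega^1_X)$ to $\H^2(W\Omega^2_X)=W^6\oplus k$. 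As an independent confirmation I would run the universal coefficient sequence with $h^*_{\dR}(X)=1,0,8,18,8,0,1$ (Proposition \ref{prop:deRham}) and $b_*(X)=1,0,6,14,6,0,1$ (Proposition \ref{prop:betti}): this forces the crystalline torsion to have length $2$ in degrees $3$ and $4$ and nowhere else, matching the placement on the diagonals $i+j=3,4$ and pinning down $\TR_0(X)=W^{14}\oplus k$.

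The hard part will be conceptual rather than computational: because $\sPic_X$ is torsion-free there is no intrinsic way to see where $X$'s torsion lives, so one is forced to transport it from $M$ through $\TR$. The delicate point is that $\TR$ organizes Hodge--Witt cohomology by anti-diagonals while the slope spectral sequence organizes it by diagonals, so the torsion genuinely migrates to different positions than for $M$ (from crystalline degrees $2,5$ to degrees $3,4$). One must therefore verify that the anti-diagonal data from $\TR$, the diagonal data from the universal coefficient theorem, and Ekedahl's duality together leave no ambiguity—in particular resolving whether $\TR_0$ acquires the extra summand $k$ and confirming the absence of nilpotent torsion, both of which are needed to make the four constraints determine the table uniquely.
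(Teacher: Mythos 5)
Your overall strategy is the paper's: fix the free parts by slopes, feed the known groups $\TR_*(X)=\TR_*(M)$ through the degenerate descent spectral sequence to read off the torsion anti-diagonal by anti-diagonal, and finish $\H^2(W\Omega^2_X)$ by Ekedahl duality. The one real difference is how the auxiliary vanishings are obtained: the paper observes that $b_1(X)=h^1_{\dR}(X)$ and $b_5(X)=h^5_{\dR}(X)$, so the universal coefficient sequence kills all torsion on the diagonals $i+j=2$ and $i+j=5$ in one stroke, whereas you re-run the $\sPic$-based arguments of Proposition \ref{prop:HWofM} (valid here, since $\NS(X)$ is torsion-free and $\sPic^0_X=0$) together with Ekedahl duality. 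That is a legitimate, if longer, alternative.

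Two justifications need repair. First, the degeneration of the descent spectral sequence for $X$ cannot be gotten by ``the same comparison with HKR'': for $M$ that argument hinged on already knowing $\H^3(W\Omega^2_M)\cong k$ with $V=0$, which makes the comparison map to $\H^3(\Omega^2_M)$ injective on the target. For $X$, after your vanishings the only dangerous differential is $d_2\colon\H^0(W\cO_X)\to\H^2(W\Omega^1_X)$, and the torsion of that target is precisely the unknown you are transporting from $\TR_{-1}$ --- so the comparison argument is circular as stated. The fix is the one the paper cites, \cite[Lem.~5.4(2)]{antieau_bragg}: $\H^0(W\cO_X)=W$ splits off the spectral sequence by restriction to a closed point, so no differential can leave it. Second, finite generation of the $\H^j(W\Omega^i_X)$ over $W$ does not imply that all torsion is semisimple; that inference is unjustified. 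It is also unnecessary: Ekedahl's duality pairs the nilpotent torsion of $\H^j(W\Omega^i_X)$ with that of $\H^{5-j}(W\Omega^{2-i}_X)$, and in every case you need, that partner slot is already known to be zero or torsion-free, so the nilpotent parts vanish without any global semisimplicity claim. (A small further caveat: your universal-coefficient cross-check does confirm total torsion $k^2$ on the diagonals $i+j=3,4$, but it does not pin down $\TR_0$, since the filtration on $\TR_0$ need not split --- the paper deliberately leaves $\TR_0$ as $W^{14}$ or $W^{14}\oplus k$.) With these substitutions your argument closes up and matches the paper's table.
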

\begin{proof}
As in the proof of Proposition \ref{prop:HWofM}, we need only determine the torsion parts, and there is no torsion in the bottom row, in $\H^1(W\cO_X)$, or in $\H^3(W \Omega^3_X)$.  Moreover, there is no torsion along the diagonals $i+j = 2$ or $i+j=5$ because $b_1(X) = h^1_{\dR}(X)$ and $b_5(X) = h^5_{\dR}(X)$.

The descent spectral sequence for $X$ must degenerate: there cannot be any non-zero differentials on the $\E_2$ page by \cite[Lem.~5.4(2)]{antieau_bragg}, nor on any later page.  This determines all but the torsion in $\H^2(W\Omega^2_X)$, which we find by Ekedahl duality.
\end{proof}

\begin{remark}
Because $h^{2}(\cO_X)=0$ and $h^3(\cO_X)=1$, the Artin--Mazur formal group $\Phi^3=\Phi^3(X,\mathbf{G}_m)$ of $X$ is prorepresentable by a smooth formal group of dimension 1. Its Cartier module is $\H^3(W\cO_X) \cong W$, and it follows that $\Phi^3\cong\widehat{\mathbf{G}}_m$.
\end{remark}

\begin{corollary}
The crystalline cohomology of $X$ and $M$ is as follows:
\[ \def\arraystretch{1.2}
\begin{array}{c|*{2}{C{5ex}}*{3}{C{9ex}}*{2}{C{5ex}}}
i & $0$ & $1$ & $2$ & $3$ & $4$ & $5$ & $6$ \\ \hline
\H^i(X/W) & $W$ & $0$ & $W^6$ & $W^{14} \oplus k^2$ & $W^6 \oplus k^2$ & $0$ & $W$ \\
\H^i(M/W) & $W$ & $0$ & $W^6 \oplus k^2$ & $W^{14}$ & $W^6$ & $k^2$ & $W$
\end{array} \]
\end{corollary}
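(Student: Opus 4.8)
The plan is to read off the crystalline cohomology directly from the Hodge--Witt tables already computed in Propositions~\ref{lem:HWofX} and~\ref{prop:HWofM}, using the degeneration of the slope spectral sequence established in the discussion preceding Proposition~\ref{prop:HWofM}. Recall that both $X$ and $M$ are weakly ordinary with trivial canonical bundle, so the slope spectral sequence
\[ \E_1^{i,j} = \H^j(W\Omega^i) \Rightarrow \H^{i+j}(-/W) \]
degenerates at the $\E_1$ page and the resulting filtration on $\H^k(-/W)$ splits. Hence
\[ \H^k(X/W) = \bigoplus_{i+j=k} \H^j(W\Omega^i_X), \qquad \H^k(M/W) = \bigoplus_{i+j=k} \H^j(W\Omega^i_M), \]
and the entire computation reduces to summing each antidiagonal of the two tables.

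First I would carry out the antidiagonal sums for $X$ from the table in Proposition~\ref{lem:HWofX}. The diagonals $i+j = 0,1,2,5,6$ read off immediately as $W, 0, W^6, 0, W$. For the middle diagonal $i+j=3$ the four contributing terms are $\H^3(W\cO_X) = W$, $\H^2(W\Omega^1_X) = W^a\oplus k$, $\H^1(W\Omega^2_X) = W^b \oplus k$, and $\H^0(W\Omega^3_X) = W$, and for $i+j=4$ they are $\H^3(W\Omega^1_X) = k$, $\H^2(W\Omega^2_X) = W^6 \oplus k$, $\H^1(W\Omega^3_X) = 0$. Then I would repeat the bookkeeping for $M$ from Proposition~\ref{prop:HWofM}, where the torsion instead accumulates on the diagonals $i+j = 2$ and $i+j=5$.

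The one point worth flagging, rather than an obstacle, is that along the middle diagonal for $X$ the two free ranks combine as $W \oplus (W^a\oplus k) \oplus (W^b\oplus k) \oplus W = W^{a+b+2} \oplus k^2$, which equals $W^{14} \oplus k^2$ by the relation $a+b=12$, independently of the unknown split between $a$ and $b$; the analogous cancellation occurs for $M$, where the middle diagonal gives $W^{a+b+2} = W^{14}$. Thus the crystalline cohomology is completely pinned down even though the individual Hodge--Witt groups and the Newton polygon of $\H^3$ are not. There is no real difficulty here: all of the genuine work went into the Hodge--Witt computations of the previous two propositions, and this corollary is pure bookkeeping once the slope spectral sequence is known to degenerate and split.
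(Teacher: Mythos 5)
Your proposal is correct and is exactly the argument the paper intends: the corollary is stated without proof precisely because it is the antidiagonal summation of the Hodge--Witt tables in Propositions~\ref{prop:HWofM} and~\ref{lem:HWofX}, using the degeneration and splitting of the slope spectral sequence established earlier in the section. Your observation that the unknown split $a+b=12$ cancels in the middle diagonal is the right point to flag, and all the sums check out.
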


%%%%%%%%%%%%%%%%%%%%%%%%%%%%%%%%%%%%%%%%%%%%%%%%%%%%%% 

\appendix
\pagebreak % FIXME: playing with spacing
\section{A result of Abuaf} \label{app:abuaf}

We give a simplified account of the proof of \cite[Thm.~1.3(4)]{abuaf}.

\begin{theorem}[Abuaf] \label{abuaf_thm}
Let $X$ and $Y$ be smooth complex projective varieties of dimension $\le 4$.  If $D^b(X) \cong D^b(Y)$ then $\H^*(\cO_X) \cong \H^*(\cO_Y)$ as algebras.  In particular $h^{0,j}(X) = h^{0,j}(Y)$ for all $j$.
\end{theorem}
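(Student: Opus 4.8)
The plan is to recover $\H^*(\cO_X)$ as an algebra from Hochschild homology, since Hochschild homology is a derived invariant but the multiplicative structure on $\bigoplus_j \H^j(\cO_X)$ is not obviously so. The key input is the Hochschild--Kostant--Rosenberg isomorphism, which over $\bC$ gives $\HH_i(X) \cong \bigoplus_j \H^j(\Omega^{j-i}_X)$ as graded vector spaces. The cohomology ring $\H^*(\cO_X) = \bigoplus_j \H^j(\cO_X)$ sits inside this as the ``top antidiagonal'' $i = -j$ summand, namely $\HH_{-j}$ contains $\H^j(\cO_X)$ together with other Hodge pieces. The first task is therefore to isolate $\H^j(\cO_X)$ inside $\HH_{\ast}(X)$ in a way that remembers the algebra structure coming from cup product, and to do this using only derived-categorical data.

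The approach I would take is to use the action of Hochschild cohomology $\HH^*(X)$ on Hochschild homology $\HH_*(X)$, both of which are derived invariants, together with their natural algebra/module structures. By HKR, $\HH^*(X) \cong \bigoplus_{p+q} \H^q(\wedge^p T_X)$, and the contraction/cap-product action of $\HH^*$ on $\HH_*$ recovers the Hodge-theoretic module structure. The ring $\H^*(\cO_X)$ should be extractable by identifying it with a canonical subquotient determined by this module structure: concretely, $\H^*(T_X)$ and more generally $\H^*(\wedge^* T_X)$ act, and one wants to single out the piece where the polyvector fields act trivially, which picks out $\H^*(\cO_X)$. In dimensions $\le 4$ the numerical constraints are tight enough that the antidiagonal $\bigoplus_j \H^j(\Omega^j)$ and the column $\bigoplus_j \H^j(\cO)$ can be separated purely from the bigraded dimensions plus the known derived invariance of $b_1$ (via Popa--Schnell) and of the Hochschild numbers.

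First I would set up the HKR decomposition and write out, for each $\HH_i$, exactly which Hodge pieces $\H^{j}(\Omega^{j-i})$ contribute; in dimension $\le 4$ there are few enough terms that the ``$\cO$-column'' can be pinned down. Next I would invoke that the total dimension $h^{0,\ast} = \sum_j h^{0,j}$ is constrained by the already-established invariance of $b_1$ together with the Hochschild numbers, reducing the problem to matching individual $h^{0,j}$ and then the ring structure. For the ring structure, I would use the compatibility of the derived equivalence with the pairing between $\HH^*$ and $\HH_*$ and the fact that cup product on $\H^*(\cO_X)$ is induced by the multiplication on the structure-sheaf summand, which is intrinsic to the Hochschild package.

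The hard part will be extracting the \emph{ring} structure rather than merely the dimensions $h^{0,j}$: the graded vector space $\bigoplus_j \H^j(\cO_X)$ appears mixed with other Hodge summands inside $\HH_*(X)$, and one must argue that cup product on the $\cO$-column is canonically determined by the Hochschild (co)homology algebra and its module action, independent of the chosen HKR splitting. This is where the dimension restriction $\dim X \le 4$ is essential, since it limits the potential ``cross terms'' in the product that could obstruct an intrinsic characterization; I expect the author's proof to exploit exactly such a low-dimensional separation, and I would follow that strategy.
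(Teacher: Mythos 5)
Your strategy has a genuine gap at its central step, and it is not the route the paper takes. The core difficulty is that a derived equivalence preserves the Hochschild grading on $\HH_*(X)$ but is not known to preserve the HKR bigrading: the summand $\H^j(\cO_X)\subset\HH_{-j}(X)$ need not be carried to the corresponding summand of $\HH_{-j}(Y)$, and this is precisely why derived invariance of individual Hodge numbers is open in general. Your proposed fix --- characterizing $\bigoplus_j\H^j(\cO_X)$ as the part of $\HH_*$ on which positive-degree polyvector fields act trivially under contraction --- does not work: that annihilator certainly contains $\H^*(\cO_X)$, but it can contain much more (any class in $\H^j(\Omega^i)$ with $i>0$ that happens to be killed by all contractions), so it is not canonically $\H^*(\cO_X)$. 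The numerical fallback also fails in dimension $4$: from the $hh_i$ together with the invariance of $b_1$ one can pin down $h^{0,1}$, $h^{0,3}$ and $h^{0,4}$ (e.g.\ $hh_{-3}=h^{0,3}+h^{1,4}=2h^{0,3}$ by Serre duality and Hodge symmetry), but $hh_{-2}=2h^{0,2}+h^{1,3}$ leaves $h^{0,2}$ undetermined. Finally, even granting the dimensions, you give no mechanism for recovering the cup product: $\HH_*$ carries a derived-invariant module structure over $\HH^*$ and the Mukai pairing, but not, in any way you have exhibited, the ring structure on the $\cO$-column.

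The paper's proof avoids Hochschild homology entirely. It first shows (Proposition \ref{abuaf_prop}, via Lemma \ref{abuaf_lem} on Hodge classes and Chern characters of line bundles, using the Hodge--Riemann bilinear relations when $\dim X=4$) that there is a line bundle $L$ on $X$ with $\rank(\Phi L)\neq 0$; this is the only place the hypothesis $\dim\le 4$ is used. Then the unit map $\cO_Y\to\R\cHom_Y(\Phi L,\Phi L)$ is split by the trace map (after dividing by the nonzero rank), giving an injection of algebras $\H^*(\cO_Y)\hookrightarrow\Ext^*_Y(\Phi L,\Phi L)=\Ext^*_X(L,L)=\H^*(\cO_X)$, and symmetrically an injection the other way. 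If you want to salvage your approach, the missing ingredient is exactly such an object of nonzero rank, at which point you would be reproducing the paper's argument.
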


This will follow from two preparatory results:
\begin{proposition} \label{abuaf_prop}
Let $X$ and $Y$ be as in the statement of Theorem \ref{abuaf_thm}, and let $\Phi\colon D^b(X) \to D^b(Y)$ be an equivalence.  Then there is a line bundle $L$ on $X$ such that $\rank(\Phi L) \ne 0$.
\end{proposition}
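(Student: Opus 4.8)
The plan is to argue by contradiction: suppose $\rank(\Phi L) = 0$ for every line bundle $L$ on $X$, and derive a contradiction with the fact that $\Phi$ is an equivalence. The key tool is the action of $\Phi$ on numerical/cohomological invariants, via the induced isomorphism on Hochschild homology or, more concretely, on the numerical Grothendieck group tensored with $\bQ$. Since $\Phi$ is an equivalence, the induced map $\Phi^{\mathrm{num}}\colon K_{\mathrm{num}}(X)_\bQ \to K_{\mathrm{num}}(Y)_\bQ$ is an isomorphism of $\bQ$-vector spaces.

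First I would record what it means numerically for $\rank(\Phi L)=0$. Using the Mukai pairing and the fact that the Mukai vector $v(\Phi L)$ equals the correspondence class $\Phi^{\mathrm{num}}$ applied to $v(L)$, the vanishing of the rank says that $v(\Phi L)$ lies in the subspace of $H^*(Y,\bQ)$ (or the numerical Chow groups) consisting of classes whose degree-$0$ component vanishes, i.e.\ classes supported in positive codimension. Dually, $\rank$ is computed by pairing against the class of a point $[\cO_y]$ in an appropriate sense, and the assumption says every $v(L)$ is sent into the orthogonal complement of the top-codimension (point) class. The strategy is then to show that the classes $v(L)$ as $L$ ranges over $\Pic(X)$ already span enough of $K_{\mathrm{num}}(X)_\bQ$ that their images cannot all avoid the rank direction, unless $\Phi^{\mathrm{num}}$ fails to be surjective.

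The cleanest way to organize the contradiction, I expect, is to use the Chern character filtration. The assumption $\rank(\Phi L)=0$ for all $L$ means that the image under $\Phi^{\mathrm{num}}$ of the span of $\{\mathrm{ch}(L) : L \in \Pic(X)\}$ lands in $F^1 := \bigoplus_{i\ge 1} H^{2i}(Y,\bQ)$. Now $\mathrm{ch}(L) = e^{c_1(L)} = 1 + c_1(L) + \tfrac12 c_1(L)^2 + \cdots$, and since $\dim \le 4$ these involve only $c_1(L)^j$ for $j \le 4$. The crucial point is that all of these Chern characters have leading term $1$ in $H^0(X,\bQ)$, yet $\Phi^{\mathrm{num}}$ sends them into $F^1(Y)$; I would exploit this together with the compatibility of $\Phi^{\mathrm{num}}$ with the Euler/Mukai pairing to force a contradiction — specifically, the structure sheaf $\cO_X = L$ with $L$ trivial has $\rank = 1$ on the source, and tracking the degree-zero part of its image shows it cannot vanish once we know $\Phi^{\mathrm{num}}$ is a pairing-isometry that is also a bijection.

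The main obstacle I anticipate is handling the case where the ranks vanish not because $\Phi$ shifts everything into positive codimension for a single $L$, but systematically for the whole Picard group: one must rule out that $\Phi^{\mathrm{num}}$ could conspire to annihilate the rank on the entire lattice spanned by line-bundle classes. I expect this is where the dimension bound $\dim \le 4$ and the finiteness/nondegeneracy of the Mukai pairing enter essentially, perhaps via a rank-counting or a Hodge-theoretic argument showing that the span of $\{\mathrm{ch}(L)\}$ together with its images under multiplication is large enough. An alternative and possibly simpler route, which I would pursue in parallel, is to invoke that $\Phi$ carries the point objects $\cO_x$ to a spanning class and to use Serre duality / the adjoint equivalence $\Phi^{-1}$ to transport the problem: if $\rank(\Phi L)=0$ for all $L$, apply the same reasoning to $\Phi^{-1}$ and a skyscraper to contradict fullness. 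The hard part is making the numerical bookkeeping watertight, but the conceptual content is simply that an equivalence cannot collapse all rank information coming from the Picard group.
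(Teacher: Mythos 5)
Your setup is on the right track---the reduction to a statement about orthogonality under the Euler pairing, and the idea of transporting the problem through the adjoint to the class of a skyscraper, is exactly how the paper proceeds: one writes $\rank(\Phi L) = \chi(\Phi L, \cO_y) = \chi(L, \Phi^{-1}\cO_y)$ and studies the single class $v = \ch(\Phi^{-1}\cO_y) \in \Hdg^*(X)$, which your hypothesis makes $\chi$-orthogonal to all $\ch(L)$. But the proposal has a genuine gap at exactly the point you flag as the ``main obstacle,'' namely the hope that ``the span of $\{\ch(L)\}$ \ldots is large enough.'' This is true for $\dim X \le 3$ (the paper proves it via the Lefschetz $(1,1)$-theorem and hard Lefschetz, so non-degeneracy of $\chi$ on $\Hdg^*(X)$ forces $v=0$, contradicting $\chi(\Phi^{-1}\cO_Y,\Phi^{-1}\cO_y)=1$), but it is \emph{false} for $\dim X = 4$: the span of line-bundle Chern characters misses the primitive part of $\H^{2,2}$ entirely. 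The fourfold case needs a separate idea that the proposal never identifies: a nonzero $v$ orthogonal to all $\ch(L)$ must lie in $\H^{2,2}_{\mathrm{prim}}(X,\bQ)$, whence $\chi(v,v)=v_2.v_2>0$ by the Hodge--Riemann bilinear relations, contradicting $\chi(\cO_y,\cO_y)=0$.

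Separately, your assertion that tracking the degree-zero part of the image of $\cO_X$ ``shows it cannot vanish once we know $\Phi^{\mathrm{num}}$ is a pairing-isometry that is also a bijection'' is not correct as stated: an isometric bijection can perfectly well send a rank-one class to a rank-zero one (the Fourier--Mukai transform of an abelian variety does exactly this to $\cO_X$). The statement to be proved is existential over all of $\Pic(X)$, and abstract bookkeeping with the isometry will not yield it; the geometric inputs (Lefschetz in dimension at most three, Hodge--Riemann in dimension four) are essential.
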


\begin{lemma} \label{abuaf_lem}
Let $X$ be a smooth complex projective variety, and let
\[ \Hdg^*(X) := \textstyle\bigoplus_p \H^{2p}(X,\bQ) \cap \H^{p,p}(X) \]
be the ring of Hodge classes, endowed with with the Euler pairing
\[ \chi(v,w) = \int_X (v_0 - v_2 + v_4 - \dotsb) \cup (w_0 + w_2 + w_4 + \dotsb) \cup \td(T_X), \]
which makes $\chi(E,F) = \chi(\ch(E),\ch(F))$. \smallskip

If $\dim X \le 3$, then Chern characters of line bundles span $\Hdg^*(X)$. \smallskip

If $\dim X = 4$, then any non-zero $v \in \Hdg^*(X)$ that is (left or right) $\chi$-orthogonal to all Chern characters of line bundles satisfies $\chi(v,v) > 0$.
\end{lemma}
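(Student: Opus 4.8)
The plan is to handle the two cases in parallel, exploiting in both that the powers $L^{\otimes n}$ of a fixed line bundle isolate the graded pieces of its Chern character. Since $\ch(L^{\otimes n}) = e^{n\ell}$ with $\ell = c_1(L)$, and for $n = 0, 1, \dots, \dim X$ the Vandermonde matrix $(n^k)$ is invertible, the $\bQ$-span of $\{\ch(L^{\otimes n})\}_n$ contains every power $\ell^k$. Hence the span of all line-bundle Chern characters is exactly the subalgebra of $\Hdg^*(X)$ generated by $\NS(X)_\bQ \subset \H^2$, and orthogonality to all $\ch(L)$ is equivalent to orthogonality to every $\ell^k$ with $\ell \in \NS(X)_\bQ$.

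For $\dim X \le 3$ I would show this subalgebra is all of $\Hdg^*(X)$. In degree $0$ it is spanned by $1 = \ch(\cO_X)$; in degree $2$ it equals $\NS(X)_\bQ = \H^2\cap\H^{1,1}$ by the Lefschetz theorem on $(1,1)$-classes; in degree $6$ it contains $h^3$ for $h$ ample, which is nonzero and spans the line $\H^6\cap\H^{3,3}$. The one substantive point is degree $4$: by hard Lefschetz, cup product with an ample class $h$ is an isomorphism $\H^2 \xrightarrow{\sim} \H^4$ respecting Hodge type, so $\H^4\cap\H^{2,2} = h\cdot(\H^2\cap\H^{1,1}) = h\cdot\NS(X)_\bQ$, and each such class $h\cdot\ell$ is a product of two divisor classes, hence lies in the subalgebra. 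This settles $\dim X \le 3$, the lower-dimensional cases being strictly easier.

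For $\dim X = 4$ I would first extract the constraints that orthogonality to line bundles places on $v = v_0 + v_2 + v_4 + v_6 + v_8$. Writing the right-orthogonality condition as $\int_X u\,e^\ell = 0$ with $u := v^*\,\td(T_X)$ (where $v^* = \sum_p(-1)^p v_{2p}$), the Vandermonde reduction gives $\int_X u_{8-2k}\,\ell^k = 0$ for every $k$ and every $\ell \in \NS(X)_\bQ$. Reading these off degree by degree, and using both the perfectness of the Poincar\'e pairing $\big(\H^6\cap\H^{3,3}\big)\times\big(\H^2\cap\H^{1,1}\big)\to\bQ$ (hard Lefschetz) and the injectivity of $\cup h^2\colon \H^2\to\H^6$, I would deduce $u_0 = u_2 = u_6 = 0$ and $\int_X u_8 = 0$. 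These translate into $v_0 = v_2 = 0$, with $v_6$ and $v_8$ forced to be specific multiples of $v_4$ determined by $\td(T_X)$; the surviving degree-$4$ condition polarizes to $\int_X v_4\cdot\ell\cdot\ell' = 0$, so $\int_X (v_4\cdot h)\,\ell' = 0$ for all $\ell'$, and perfectness of the pairing again forces $v_4\cdot h = 0$, i.e. $v_4$ is \emph{primitive}. In particular $v \ne 0$ forces $v_4 \ne 0$.

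The payoff is that in $\chi(v,v) = \int_X v^* v\,\td(T_X)$ every product involving $v_6$ or $v_8$ has degree $>8$ and so vanishes, leaving only the top-degree class $v_4^2$ (and the Todd factor contributing just its unit part); hence $\chi(v,v) = \int_X v_4^2$. The crux of the lemma — and the step I expect to be the main obstacle to state cleanly — is then the positivity $\int_X v_4^2 > 0$ for the nonzero real primitive class $v_4$ of type $(2,2)$ on a fourfold, which is precisely the Hodge--Riemann bilinear relation in middle degree. The left-orthogonal case is identical after replacing $u = v^*\,\td(T_X)$ by $v\,\td(T_X)$ and $e^\ell$ by $e^{-\ell}$, which alters none of the conclusions.
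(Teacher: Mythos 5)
Your proof is correct and follows essentially the same route as the paper's: the Lefschetz $(1,1)$-theorem and hard Lefschetz show that divisor classes generate $\Hdg^*(X)$ in dimension $\le 3$ (the paper packages this via the triangular family $\ch(\cO_{D_i})$ and its products rather than your Vandermonde argument on $\ch(L^{\otimes n})$, but these spans coincide), and in dimension $4$ the orthogonality conditions reduce the claim to the Hodge--Riemann positivity $\int_X v_4^2 > 0$ for a nonzero primitive rational $(2,2)$-class. Your degree-by-degree bookkeeping in the fourfold case --- recording that $v_6$ and $v_8$ are Todd-class corrections of $v_4$ rather than zero, and that these do not contribute to $\chi(v,v)$ --- is in fact slightly more precise than the paper's assertion that $v$ itself lies in $\H^{2,2}_{\mathrm{prim}}(X,\bQ)$, though the conclusion is the same.
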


\begin{proof}[Proof of Lemma \ref{abuaf_lem}]
We omit the cases $\dim X \le 1$.

If $\dim X = n \ge 2$ then by the Lefschetz theorem on $(1,1)$-classes we can choose irreducible divisors $D_1, \dotsc, D_k \subset X$ such that their cohomology classes $[D_i]$ span $\Hdg^2(X)$.  Suppose without loss of generality that $D_1$ is ample, so $[D_1]^n = d \cdot [\text{pt}]$ for some $d > 0$.  Observe that
\[ \ch(\cO_{D_i}) = \ch(\cO_X) - \ch(\cO_X(-D_i)) = 0 + [D_i] + \dotsb, \]
and that any product of $\ch(\cO_{D_i})$s is a linear combination of Chern characters of line bundles.

If $\dim X = 2$ then $\Hdg^*(X)$ is spanned by
\begin{align*}
\ch(\cO_X) &= 1 + \dotsb \\
\ch(\cO_{D_i}) &= 0 + [D_i] + \dotsb \\
\ch(\cO_{D_1})^2 &= 0 + 0 + d\cdot[\text{pt}].
\end{align*}

If $\dim X = 3$ then $\Hdg^4$ is spanned by $[D_1].[D_i]$ by the hard Lefschetz theorem, so $\Hdg^*$ is spanned by
\begin{align*}
\ch(\cO_X) &= 1 + \dotsb \\
\ch(\cO_{D_i}) &= 0 + [D_i] + \dotsb \\
\ch(\cO_{D_1}) . \ch(\cO_{D_i}) &= 0 + 0 + [D_1].[D_i] + \dotsb \\
\ch(\cO_{D_1})^3 &= 0 + 0 + 0 + d\cdot[\text{pt}].
\end{align*}

If $\dim X = 4$ then in a similar way we can span
\[ \H^0(X,\bQ) \oplus \H^{1,1}(X,\bQ) \oplus [D_1].\H^{1,1}(X,\bQ) \oplus \H^{3,3}(X,\bQ) \oplus \H^4(X,\bQ). \]
If $v$ is (left or right) $\chi$-orthogonal to this then $v \in \H^{2,2}_\text{prim}(X,\bQ)$, so if $v \ne 0$ then $\chi(v,v) = v_2.v_2 > 0$ by the Hodge--Riemann bilinear relations.
\end{proof}

\begin{proof}[Proof of Proposition \ref{abuaf_prop}]
We have
\[ \rank(\Phi L) = \chi(\Phi L, \cO_y) = \chi(L, \Phi^{-1} \cO_y), \]
where $\cO_y$ is the skyscraper sheaf of some point $y \in Y$.  Suppose this rank is zero for all $L \in \Pic(X)$, and let $v = \ch(\Phi^{-1} \cO_y) \in \Hdg^*(X)$.  The Euler pairing on $\Hdg^*(X)$ is non-degenerate, so if $\dim X \le 3$ then by Lemma \ref{abuaf_lem} we have $v = 0$; but this contradicts the fact that $\chi(\Phi^{-1} \cO_Y, \Phi^{-1} \cO_y) = \chi(\cO_Y, \cO_y) = 1$.  If $\dim X = 4$ then either $v = 0$, which again is impossible, or $\chi(v,v) > 0$, which contradicts the fact that $\chi(\Phi^{-1} \cO_y, \Phi^{-1} \cO_y) = \chi(\cO_y, \cO_y) = 0$.
\end{proof}

\begin{proof}[Proof of Theorem \ref{abuaf_thm}]
Because $\rank(\Phi L) \ne 0$, the natural map of algebra objects
\[ \cO_Y \to \R\cHom_Y(\Phi L,\Phi L) \]
is split by the trace map
\[ \R\cHom_Y(\Phi L,\Phi L) \to \cO_Y, \]
so it induces an injection
\[ \H^*(\cO_Y) \hookrightarrow \Ext^*_Y(\Phi L, \Phi L) = \Ext^*_X(L,L) = \H^*(\cO_X). \]
Symmetrically we get an injection $\H^*(\cO_X) \hookrightarrow \H^*(\cO_Y)$.
\end{proof}

We remark that this proof fails in characteristic $p$ because an equivalence might take all line bundles to objects whose rank is a multiple of $p$.

%%%%%%%%%%%%%%%%%%%%%%%%%%%%%%%%%%%%%%%%%%%%%%%%%%%%%% 

\section{A higher-dimensional example in any characteristic (by Alexander Petrov)} \label{app:petrov}

Let $p$ be an arbitrary prime number.  Denote $\overline{\mathbb{F}}_p$ by $k$.

\begin{theorem}\label{appendix: main}
There exist smooth projective derived equivalent varieties $X_1, X_2$ over $k$ such that $$h^{0,3}(X_1)\neq h^{0, 3}(X_2)$$ Moreover, for both $i=1,2$ the variety $X_i$ satisfies the following properties:
\begin{enumerate}
\addtolength \itemsep \smallskipamount

\item $X_i$ can be lifted to a smooth formal scheme $\fX_i$ over $W(k)$ such that Hodge cohomology groups $H^r(\fX_i,\Omega^s_{\fX_i/W(k)})$ are torsion-free for all $r,s$.

\item The Hodge-to-de Rham spectral sequence for $X_i$ degenerates at the first page.

\item The crystalline cohomology groups $H^n_{\cris}(X_i/W(k))$ are torsion-free for all $n$.

\item The Hochschild-Kostant-Rosenberg spectral sequence for $X_i$ degenerates at the second page. That is, there exists an isomorphism $\HH_n(X_i/k)\simeq \bigoplus \limits_{s}H^s(X_i,\Omega^{n+s}_{X_i/k})$ for every $n$.

\item $X_i$ cannot be lifted to a smooth algebraic scheme over $W(k)$. 
\end{enumerate}
\end{theorem}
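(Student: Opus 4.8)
The plan is to realize $X_1$ and $X_2$ as a dual pair of isotrivial Abelian schemes over a common $4$-dimensional base $B$, so that a relative Fourier--Mukai transform produces the equivalence while the Hodge numbers are forced apart by the difference between the relative Lie algebra of an Abelian scheme and that of its dual. Concretely, I would fix an Abelian variety $A_0$ over $k$ of dimension $\ge 4$ carrying a faithful action of a finite commutative group scheme $G$ (for instance $\mu_p$ or $\bZ/p$), chosen so that $G$ acts by genuinely different modules on $\mathrm{Lie}(A_0)$ and on $\H^1(A_0,\cO_{A_0}) = \mathrm{Lie}(A_0^\vee)$; over $k$ these two $G$-representations need not be dual or conjugate, unlike over $\bC$. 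Choosing a smooth projective $4$-fold $B$ equipped with a suitable $G$-torsor $\tilde B \to B$, I would set $X_1 = (A_0 \times \tilde B)/G$ and $X_2 = (A_0^\vee \times \tilde B)/G$, which are Abelian schemes $\pi\colon X_1 \to B$ and $\pi^\vee\colon X_2 \to B$ dual to one another. The Poincar\'e bundle on $X_1 \times_B X_2$ then induces a relative Fourier--Mukai equivalence $D^b(X_1) \cong D^b(X_2)$ by Mukai's theorem in families, which is valid in any characteristic.

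For the Hodge numbers I would run the Leray spectral sequence for $\pi$, using that $R^q\pi_*\cO_{X_1} = \wedge^q R^1\pi_*\cO_{X_1} = \wedge^q\,\mathrm{Lie}(X_2/B)$ for an Abelian scheme, so that $h^{0,3}(X_1) = \sum_{p+q=3}\dim\H^p(B,\wedge^q\,\mathrm{Lie}(X_2/B))$ and symmetrically $h^{0,3}(X_2) = \sum_{p+q=3}\dim\H^p(B,\wedge^q\,\mathrm{Lie}(X_1/B))$. Because the group-scheme twist makes $\mathrm{Lie}(X_1/B)$ and $\mathrm{Lie}(X_2/B)$ into non-isomorphic, cohomologically distinguishable bundles on $B$, these two sums differ, giving $h^{0,3}(X_1) \ne h^{0,3}(X_2)$; the invariance of Hochschild homology under the equivalence only forces the coarser sums $\sum_q h^{q,q+3}$ to agree, so the discrepancy is absorbed by the off-diagonal terms. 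The same computation, compared with the Hodge-filtration description of $h^{3,0}(X_i) = \dim\H^0(\Omega^3_{X_i})$ (which mixes $\Omega_B$ with $\omega_{X_i/B} = \mathrm{Lie}(X_i/B)^\vee$ rather than $\mathrm{Lie}(X_i^\vee/B)$), shows that each individual diamond is asymmetric, $h^{0,3}(X_i) \ne h^{3,0}(X_i)$; this asymmetry is the engine of property (5).

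Properties (1)--(4) I would establish from the explicit geometry. The Abelian varieties $A_0$, $A_0^\vee$ lift to $W(k)$ by Serre--Tate, and deformation theory of Abelian schemes (Grothendieck--Messing) lifts the whole family formally once the base and torsor are lifted, yielding the smooth formal schemes $\fX_i$ of (1); torsion-freeness of their Hodge cohomology follows because every group in sight is assembled by K\"unneth and descent from the torsion-free cohomology of Abelian varieties and of $B$. Hodge--de Rham degeneration (2) and Hochschild--Kostant--Rosenberg degeneration at the second page (4) I would deduce from the corresponding degenerations for $A_0$ and $B$ through the twisted product structure, rather than from a dimension-versus-$p$ bound, since $\dim X_i \ge 8$ may exceed $p$; crystalline torsion-freeness (3) then follows from (1) and (2) by the universal-coefficient sequence, exactly as in the body of this paper.

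Finally, property (5): suppose $X_i$ admitted a smooth projective algebraic lift $\mathfrak Y/W(k)$. By (1)--(3) the Hodge numbers are constant in this lift, so $h^{p,q}(X_i) = h^{p,q}(\mathfrak Y_K)$ for the generic fibre over $K = W(k)[1/p]$; choosing an embedding $K \hookrightarrow \bC$ and base-changing gives a smooth projective complex variety with the same Hodge numbers, which must therefore satisfy Hodge symmetry $h^{0,3} = h^{3,0}$. This contradicts the asymmetry established above, so no algebraic lift exists even though the formal lift of (1) does. I expect the main obstacle to be the heart of the construction in the first two paragraphs: exhibiting a single package $(A_0, G, B, \tilde B)$ for which $\mathrm{Lie}(X_1/B)$ and $\mathrm{Lie}(X_2/B)$ are provably cohomologically distinct, and for which the degeneration statements (2) and (4) genuinely hold in small characteristic. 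Controlling the Leray and Hodge-filtration spectral sequences simultaneously, so that the two values of $h^{0,3}$ can be computed exactly rather than merely bounded, is where the real work lies.
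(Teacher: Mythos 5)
Your overall architecture coincides with the paper's own proof: both realize $X_1$ and $X_2$ as dual isotrivial Abelian schemes over a common base, obtained as free diagonal quotients of $A\times\tilde B$ and $\hA\times\tilde B$, both get the equivalence from the relative Mukai transform after identifying $X_2$ with $\Pic^0_B(X_1)$ by descent, and your argument for property (e) via failure of Hodge symmetry is essentially identical to the paper's. But there is a genuine gap at the engine of the construction, and the mechanism you propose for it would fail. You suggest taking $G=\mu_p$ or $\bZ/p$. If $p$ divides $|G|$, every step that identifies cohomology of the quotient with $G$-invariants of the cover breaks: for a free $\bZ/p$-action the Hochschild--Serre spectral sequence $H^i(G,H^j(A_0\times\tilde B,\cO))\Rightarrow H^{i+j}(X_1,\cO)$ has nonvanishing group cohomology in all degrees $i$, so $h^{0,3}(X_1)$ is not computed by the invariants appearing in your Leray sum; for $\mu_p$ the quotient map is not even \'etale, so the \'etale-descent identification of $X_2$ with $\Pic^0_B(X_1)$ fails as stated, and $\mu_p$-quotients genuinely change $h^{0,j}$ in ways invariants cannot see (the body of this paper is an illustration: $M$ is essentially a free $\bZ/3\times\mu_3$-quotient of $X$, and $h^{0,1}$ jumps from $0$ to $1$ because of the $\mu_3$ factor). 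The lifting and degeneration arguments (a)--(d) likewise rest on the cohomology of $X_i$ being a direct summand (the $G$-invariants) of that of the cover, which again needs $|G|$ invertible. This is why the paper takes $G=\bZ/l$ with $l$ an odd prime dividing $p^{2r}+1$, in particular prime to $p$.

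Once $|G|$ is prime to $p$, however, the asymmetry cannot live where you put it. Semisimplicity gives $\dim V^G=\dim(V^\vee)^G$ for every $G$-module $V$, and by the remark following Proposition \ref{appendix:hodgesym cite} (citing Cor.~2.2 of \cite{petrov}), one in fact has $\dim H^i(A,\cO_A)^G=\dim H^i(\hA,\cO_{\hA})^G$ forced for $i=1,2$ whenever $p\nmid|G|$. So even when $\mathrm{Lie}(A)$ and $\mathrm{Lie}(\hA)$ are non-isomorphic $G$-modules, that difference is invisible to invariants of $\wedge^1$ and $\wedge^2$; the first place it can appear is $\wedge^3$, i.e.\ degree $3$, and exhibiting an Abelian variety with a $G$-action realizing $\dim(\wedge^3\mathrm{Lie}(\hA))^G\ne\dim(\wedge^3\mathrm{Lie}(A))^G$ is precisely the hard content, which the paper imports from CM Abelian varieties and Honda--Tate theory (Prop.~3.1 of \cite{petrov}). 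Moreover, to convert this single degree-$3$ discrepancy into $h^{0,3}(X_1)\ne h^{0,3}(X_2)$ your Leray sums must have no contaminating cross terms $H^p(B,\wedge^q\mathrm{Lie})$ with $1\le p\le 3$; these are \emph{not} controlled by the invariant-dimension equalities above, and the paper kills them by choosing the cover of the base to be a complete intersection fourfold $Y$ with $H^i(Y,\cO_Y)=0$ for $1\le i\le 3$, carrying a free $G$-action by Serre's construction. You correctly flag the construction of the package $(A_0,G,B,\tilde B)$ as where the real work lies, but the route you sketch for it points in the wrong direction, and without that input the remaining steps have nothing to run on.
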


The varieties $X_1,X_2$ are both obtained as approximations of the quotient stack associated to a finite group acting on an abelian variety. The key to the construction is the appropriate choice of such finite group action that relies on complex multiplication and Honda-Tate theory.

Let $G= \bZ/l\bZ$ be the cyclic group of order $l$ where $l$ is an arbitrary odd prime divisor of a number of the form $p^{2r}+1$, for an arbitrary $r\geq 1$.

\begin{proposition}\label{appendix:hodgesym cite}
There exists an abelian variety $A$ over $k$ equipped with an action of $G$ by endomorphisms of $A$ such that \begin{equation}\label{appendix:inequality}\dim_k H^3(A,\cO_A)^{G}\neq \dim_k H^3(\hA,\cO_{\hA})^G\end{equation} Here $\hA$ denotes the dual abelian variety. Moreover, $A$ can be lifted to a formal abelian scheme $\fA$ over $W(k)$ together with an action of $G$.
\end{proposition}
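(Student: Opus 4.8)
The plan is to realize $A$ as a supersingular abelian variety carrying an action of the full ring $O_E = \bZ[\zeta_l]$, so that $G = \bZ/l$ acts through $\zeta_l \in O_E^\times$, and then to convert \eqref{appendix:inequality} into a purely combinatorial statement about the Hodge (Lie) type of $A$. Write $S = (\bZ/l)^\times$, so $\dim A = g = (l-1)/2$ and $E \otimes_\bQ k \cong \prod_{i \in S} k$, indexed by the embeddings $\zeta_l \mapsto \zeta^i$. Since $\H^j(A,\cO_A) = \bigwedge^j \H^1(A,\cO_A) = \bigwedge^j \operatorname{Lie}(\hA)$ and the $O_E$-module $\operatorname{Lie}(\hA)$ is multiplicity-free (it is a summand of the rank-one $E\otimes k$-module $\H^1_{\dR}(A)$), it decomposes as $\bigoplus_{i \in T} \chi_i$ for a subset $T \subseteq S$ of size $g$, where the generator of $G$ acts by $\zeta^i$ on $\chi_i$. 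Hence $\dim_k \H^3(A,\cO_A)^G$ equals $N(T) := \#\{\,\{i,j,k\} \subseteq T : i+j+k \equiv 0 \bmod l\,\}$. Writing $\Psi$ for the Lie type of $A$ and $\Psi^c = S \setminus \Psi$ for its complement, the de Rham decomposition $\H^1_{\dR}(A) = \operatorname{Lie}(A)^* \oplus \operatorname{Lie}(\hA)$ and the fact that $\operatorname{Lie}(A)^*$ has type $-\Psi$ give $T = S \setminus(-\Psi) = -\Psi^c$, while the type governing $\H^*(\hA,\cO_{\hA}) = \bigwedge^*\operatorname{Lie}(A)$ is $\Psi$ itself. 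As the condition $i+j+k \equiv 0$ is preserved by $i \mapsto -i$, the inequality \eqref{appendix:inequality} is equivalent to
\[ N(\Psi) \ne N(\Psi^c). \]

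First I would determine which $\Psi$ are admissible, i.e.\ arise from an abelian variety over $k$ with $O_E$-action. The hypothesis $l \mid p^{2r}+1$ means $p^{2r} \equiv -1 \bmod l$, so multiplication by $p^{2r}$ on $S$ is negation; in particular $-1 \in \langle p\rangle$, so every orbit $O$ of multiplication by $p$ on $S$ is stable under $i \mapsto -i$. Passing to the contravariant Dieudonn\'e module $D = \bigoplus_{i\in S} D_i$, with $O_E$ acting on the rank-one piece $D_i$ through $i$ and $F\colon D_i \to D_{pi}$, the Shimura--Taniyama formula identifies the Frobenius slope on an orbit $O$ with $|\Psi\cap O|/|O|$, while a polarization forces the slope on $-O$ to equal $1$ minus the slope on $O$. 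Since $-O = O$, every slope is $\tfrac12$ and admissibility becomes the local balance condition $|\Psi\cap O| = |O|/2$ on each orbit. Conversely, every such $\Psi$ is realized: one writes down the Dieudonn\'e lattice with the prescribed slope-$\tfrac12$ structure and Hodge filtration and invokes Honda--Tate theory to produce an abelian variety over $k$ with $O_E$-action in the (non-empty, supersingular) isogeny class having this Lie type.

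The heart of the argument, and the step I expect to be the main obstacle, is to exhibit an admissible $\Psi$ with $N(\Psi)\ne N(\Psi^c)$. Because $p^{2r}\equiv -1$ is incompatible with $p\equiv -1 \bmod l$, every orbit has size $\ge 4$, so there is genuine freedom in the balanced choice of $\Psi\cap O$. The symmetric choices, namely those with $\Psi^c = -\Psi$ (for instance taking exactly one element from each conjugate pair $\{i,-i\}$), merely reproduce the characteristic-zero Hodge symmetry and give $N(\Psi)=N(\Psi^c)$; the point is therefore to select a genuinely \emph{asymmetric} balanced $\Psi$, containing both $i$ and $-i$ for some pair and compensating elsewhere, for which the two triple-counts provably differ. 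I would prove existence either by an explicit choice for a conveniently small $l$ dividing some $p^{2r}+1$, or by a counting argument, comparing $N(\Psi)$ with its average over balanced types, or tracking the change in $N$ under a single symmetry-breaking swap, to force the counts to disagree once $l$ is large enough. This is where the arithmetic of $l$, hence the role of $p^{2r}+1$ in imposing supersingularity and the balance condition, interacts most delicately with the combinatorics.

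Finally, the lifting assertion is comparatively soft. Since $A$ has CM by $O_E$, Serre--Tate theory reduces lifting $A$ together with its $G$-action to lifting the $p$-divisible group $A[p^\infty]$ with its $O_E$-action, and a CM $p$-divisible group admits a lift over $W(k)$ preserving the action (for example by Lubin--Tate theory, or the CM lifting results of Oort and collaborators). Retaining the resulting formal lift, which is all that is required, produces the formal abelian scheme $\fA$ over $W(k)$ together with the action of $G$.
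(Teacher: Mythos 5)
Your setup is sound and, in fact, closely mirrors what actually underlies this statement: the paper itself does not construct $A$ from scratch but simply takes $A$ to be the special fibre of the formal abelian scheme produced by Proposition~3.1 of the cited paper of Petrov, and then adds only the duality bookkeeping ($H^3(\hA,\cO)\cong H^0(A,\Omega^3)^\vee$ $G$-equivariantly, plus $\dim (V^\vee)^G=\dim V^G$ since $p\nmid |G|$). Your translation of the inequality into the combinatorial condition $N(\Psi)\neq N(\Psi^c)$ for the Lie type $\Psi$ is correct (the sign ambiguities in the $G$-action on $\hA$ wash out because $N$ is invariant under $i\mapsto -i$, as you note), and your identification of the admissible types as those balanced on each $\langle p\rangle$-orbit --- via the characteristic-$p$ Shimura--Taniyama formula, the relation $\lambda_v+\lambda_{\bar v}=1$, and the fact that $p^{2r}\equiv -1$ forces $\bar v=v$ --- is the right characterization, though the converse (every balanced type is realized by some lattice in the isogeny class) is itself only asserted. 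The lifting argument via Serre--Tate is fine.

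The genuine gap is the step you yourself flag as the main obstacle: you never actually produce a balanced $\Psi$ with $N(\Psi)\neq N(\Psi^c)$. You list three candidate strategies (explicit small example, averaging, tracking $N$ under a swap) and execute none of them; but this existence statement is the entire content of the proposition --- everything else is formal. It is also not automatic: for $l=5$ (which is permitted by the surrounding text, e.g.\ $p=2$, $r=1$) your construction gives $\dim A=2$, so $H^3(A,\cO_A)=0=H^3(\hA,\cO_{\hA})$ and \emph{no} choice of $\Psi$ works; and for symmetric types one always has $N(\Psi)=N(\Psi^c)$. So the claim genuinely depends on $l$ being large enough and on a verified asymmetric choice (e.g.\ for $p=2$, $l=13$, $\Psi=\{1,2,3,4,5,12\}$ one can check $N(\Psi)=0$ while $N(\Psi^c)=3$), and a complete proof must either exhibit such a choice for each $p$ or give a general argument --- which is precisely what the paper outsources to \cite{petrov}, Proposition~3.1. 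As written, your proposal reduces the proposition to an unproved combinatorial assertion.
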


\begin{proof}
Take $A=\fZ\times_{W(k')}k$ with $\fZ$, $k'$ provided by \cite{petrov}, Proposition 3.1. The inequality (\ref{appendix:inequality}) follows because there are $G$-equivariant isomorphisms $H^3(\hA,\cO)\simeq\Lambda^3H^1(\hA,\cO_{\hA})\simeq \Lambda^3 (H^0(A,\Omega^1_{A/k})^{\vee})\simeq H^0(A,\Omega^3_{A/k})^{\vee}$ (the last isomorphism exists even if $p=3$) and $\dim_k H^0(A,\Omega^3_{A/k})^G=\dim_k (H^0(A,\Omega^3_{A/k})^{\vee})^G$ as the order of $G$ is prime to $p$.
\end{proof}

This proposition is specific to positive characteristic. For an abelian variety $B$ equipped with an action of a finite group $\Gamma$ over a field $F$ of characteristic zero there must exist $\Gamma$-equivariant isomorphisms $H^i(B,\Omega^{j}_{B/F})\simeq H^i(\hB,\Omega^j_{\hB/F})^{\vee}$ for all $i, j$ as follows either from Hodge theory or thanks to the existence of a separable $\Gamma$-invariant polarization on $B$. 

A more subtle feature of this construction is that it is impossible to find an abelian variety $B$ with an action of a finite group $\Gamma$ with $p\nmid |\Gamma|$ that would have $\dim_k H^i(B,\cO_B)^{\Gamma}\neq \dim_k H^i(\hB,\cO_{\hB})^{\Gamma}$ for $i=1$ or $i=2$. This can be deduced from Corollary 2.2 of \cite{petrov} applied to an approximation of the stack $[\fB/G]$ where $\fB$ is a formal $\Gamma$-equivariant lift of $B$ that exists by Grothendieck-Messing theory combined with the fact that the order of $\Gamma$ is prime to $p$.

\begin{proof}[Proof of Theorem \ref{appendix: main}] Let $A$ be the abelian variety provided by Proposition \ref{appendix:hodgesym cite}. By Proposition 15 of \cite{serre-mexico} there exists a smooth complete intersection $Y$ of dimension $4$ over $k$ equipped with a free action of $G$. The diagonal action of $G$ on the product of $A\times Y$ is free as well. 

Define $X_1=(A\times Y)/G$ and $X_2=(\hA\times Y)/G$ where $\hA$ is the dual abelian variety of $A$ equipped with the induced action of $G$. In both cases the quotient is taken with respect to the free diagonal action. The equivalence of $D^b(X_1)$ and $D^b(X_2)$ will follow from the Mukai equivalence between derived categories of an abelian scheme and its dual. Indeed, consider $X_1$ and $X_2$ as abelian schemes over $Y/G$. The base changes of both $\Pic^0_{Y/G}(X_1)$ and $X_2$ along $Y\to Y/G$ are isomorphic to $\hA\times Y$ compatibly with the $G$-action. By \'etale descent, $\Pic^0_{Y/G}(X_1)\simeq X_2$ as abelian schemes over $Y/G$. Proposition 6.7 of \cite{bartocci} implies that $D^b(X_1)\simeq D^b(X_2)$.

Next, we compare the Hodge numbers of $X_1$ and $X_2$. By Th\'eor\`eme 1.1 of Expos\'e XI \cite{SGA72} we have $H^i(Y,\cO_Y)=0$ for $1\leq i\leq 3$. Hence, there are $G$-equivariant identifications $H^3(A\times Y,\cO_{A\times Y})\simeq H^3(A,\cO_A)$ and $H^3(\hA\times Y,\cO_{\hA\times Y})\simeq H^3(\hA,\cO_{\hA})$. Since $G$ acts freely on both $A\times Y$ and $\hA\times Y$, the projections $A\times Y\to X_1$ and $\hA\times Y\to X_2$ are \'etale $G$-torsors and, since the order of $G$ is prime to $p$, we have $H^3(X_1,\cO_{X_1})\simeq H^3(A\times Y,\cO_{A\times Y})^G$ and $H^3(X_2,\cO_{X_2})\simeq H^3(\hA\times Y,\cO_{\hA\times Y})^G$.  

The inequality \eqref{appendix:inequality} therefore says that $h^{0,3}(X_1)\neq h^{0,3}(X_2)$. Condition (a) can be fulfilled as it is possible to choose $Y$ that lifts to a smooth projective scheme over $W(k)$ together with an action of $G$, by Proposition 4.2.3 of \cite{raynaud-pictorsion}. Denote by $\fX_1$ and $\fX_2$ the resulting formal schemes over $W(k)$ lifting $X_1$ and $X_2$. Since $\fX_i$ for $i=1,2$ can be presented as a quotient  by a free action of $G$ of a product of an abelian scheme with a complete intersection, the Hodge cohomology modules $H^r(\fX_i,\Omega^s_{\fX_i/W(k)})$ are free for all $r,s$.

Both properties (b) and (d) would be immediate if we had $\dim_k X_i\leq p$ but this is not always possible to achieve. Instead, we can argue using the lifts $\fX_i$. For (b), consider the Hodge-Tate complex $\oPrism_{\fX_i/W(k)[[u]]}$. By Proposition 4.15 of \cite{prisms} there is a morphism $s\colon\Omega^1_{\fX_i/W(k)}[-1]\to \oPrism_{\fX_i/W(k)[[u]]}$ in the derived category of $\fX_i$ that induces an isomorphism on first cohomology. Taking $n$-th tensor power of $s$ and precomposing it with the antisymmetrization map $\Omega^n_{\fX_i/W(k)}\to(\Omega^{1}_{\fX_i/W(k)})^{\otimes n}$ we obtain maps $\Omega^n_{\fX_i/W(k)}[-n]\to \oPrism_{\fX_i/W(k)[[u]]}$ that induce a quasi-isomorphism $\oPrism_{\fX_i/W(k)[[u]]}[\frac{1}{p}]\simeq \bigoplus\limits_{n\geq 0}\Omega^n_{\fX_i/W(k)}[-n]\otimes_{W(k)}W(k)[\frac{1}{p}]$. In particular, the differentials in the Hodge-Tate spectral sequence $H^s(\fX_i,\Omega^{r}_{\fX_i/W(k)})\Rightarrow H^{s+r}_{\oPrism}(\fX_i/W(k)[[u]])$ vanish modulo torsion. But, as we established above, the Hodge cohomology of $\fX_i$ has no torsion, so the Hodge-Tate spectral sequence degenerates at the second page. Therefore the conjugate spectral sequence for $X_i$ degenerates at the second page as well and, equivalently, the Hodge-to-de Rham spectral sequence degenerates at the first page.

Similarly, for (d) consider the Hochschild-Kostant-Rosenberg spectral sequence $E_2^{r,s}=H^r(\fX_i,\Omega^{-s}_{\fX_i/W(k)})$ converging to $\HH_{-r-s}(\fX_i/W(k))$. There exist maps $\varepsilon_n\colon\Omega^n_{\fX_i/W(k)}[n]\to \HH(\fX_i/W(k))$ into the Hochschild complex inducing multiplication by $n!$ on the $n$-th cohomology: $\varepsilon_n=n!\colon\Omega^n_{\fX_i/W(k)}\to \cH^{-n}(\HH(\fX_i/W(k)))\simeq\Omega^n_{\fX_i/W(k)}$. Therefore, the HKR spectral sequence always degenerates modulo torsion, hence degenerates at the second page in our case. Passing to the mod $p$ reduction gives (d).

The property (c) follows from (a) and (b) as $H^n_{\cris}(X_i/W(k))\simeq H^n_{\dR}(\fX_i/W(k))$. 

Finally, to prove (e), note that by the same computation as above one sees that $h^{0,3}(X_1)=h^{3,0}(X_2)\neq h^{0,3}(X_2)=h^{3,0}(X_1)$ so both $X_1$ and $X_2$ violate Hodge symmetry. Denote by $K$ the fraction field of $W(k)$. If $\cX_i$ is a smooth scheme over $W(k)$ lifting $X_i$ then we have \begin{equation}\label{appendix:semicont}
\dim_{K}H^r(\cX_{i,K},\Omega^s_{\cX_{i,K}/K})\leq \dim_k H^r(X_i,\Omega^s_{X_i/k})\end{equation} for all $r,s$ by semi-continuity while $\dim_K H^n_{\dR}(\cX_{i,K}/K)=\dim_k H^n_{\dR}(X_i/k)$ because $H^n_{\dR}(\cX_i/W(k))\simeq H^n_{\cris}(X_i/W(k))$ is torsion-free for all $n$. Since Hodge-to-de Rham spectral sequences for $X_i$ and $\cX_{i,K}$ degenerate at the first page, we deduce that $\sum\limits_{r,s}\dim_{K}H^r(\cX_{i,K},\Omega^s_{\cX_{i,K}/K})=\sum\limits_{r,s}\dim_k H^r(X_i,\Omega^s_{X_i/k})$ so (\ref{appendix:semicont}) is in fact equality for all $r, s$. But this means that the smooth proper algebraic variety $\cX_{i,K}$ over a field of characteristic zero violates Hodge symmetry which is impossible.
\end{proof}

\bibliographystyle{plain}
\bibliography{trimmed}

% Un-comment for the journal version:
%\renewcommand \thefootnote {}
%\vspace{-\baselineskip}\footnote{The authors have no conflicts of interest to declare that are relevant to the content of this article.  The manuscript has no associated data, but Macaulay2 and Magma code are available as ancillary files with the arXiv version.}

\end{document}